\documentclass[11pt,a4paper]{article}
\usepackage[T1]{fontenc}
\usepackage[utf8]{inputenc}
\usepackage{lmodern,microtype}
\usepackage[margin=27mm,headheight=14pt]{geometry}
\usepackage{amsmath,amssymb,amsthm,mathtools,bm,mathrsfs}
\usepackage{esint,aliascnt}
\usepackage{cite} 
\usepackage[colorlinks=true,linkcolor=blue,citecolor=blue,urlcolor=black]{hyperref}
\usepackage[noabbrev]{cleveref}
\usepackage{fancyhdr}
\fancypagestyle{plain}{
  \fancyhf{}
  \fancyfoot[C]{\thepage}

}
\numberwithin{equation}{section}
\newtheorem{theorem}{Theorem}[section]
\newaliascnt{proposition}{theorem}
\newtheorem{proposition}[proposition]{Proposition}
\aliascntresetthe{proposition}
\newaliascnt{lemma}{theorem}
\newtheorem{lemma}[lemma]{Lemma}
\aliascntresetthe{lemma}
\newaliascnt{corollary}{theorem}
\newtheorem{corollary}[corollary]{Corollary}
\aliascntresetthe{corollary}
\theoremstyle{definition}
\newaliascnt{definition}{theorem}
\newtheorem{definition}[definition]{Definition}
\aliascntresetthe{definition}
\newaliascnt{remark}{theorem}
\newtheorem{remark}[remark]{Remark}
\aliascntresetthe{remark}
\crefname{theorem}{Theorem}{Theorems}
\crefname{proposition}{Proposition}{Propositions}
\crefname{lemma}{Lemma}{Lemmas}
\crefname{corollary}{Corollary}{Corollaries}
\crefname{definition}{Definition}{Definitions}
\crefname{remark}{Remark}{Remarks}
\crefname{section}{Section}{Sections}
\crefname{appendix}{Appendix}{Appendices}
\AtBeginDocument{}
\newcommand{\eps}{\varepsilon}
\newcommand{\R}{\mathbb R}
\newcommand{\T}{\mathbb T^3}
\newcommand{\Oe}{{\Omega_\eps}}
\newcommand{\He}{{\mathcal H_\eps}}
\newcommand{\Ie}{{\mathcal I_\eps}}
\newcommand{\Ceps}{\mathcal C_\eps}
\newcommand{\Eeps}{\mathscr E_\eps}
\newcommand{\M}{\mathbf M}
\newcommand{\B}{\mathbf B}
\newcommand{\Div}{\operatorname{div}}
\newcommand{\Id}{\operatorname{Id}}
\newcommand{\dx}{\,\mathrm dx}
\newcommand{\dt}{\,\mathrm dt}
\newcommand{\ds}{\,\mathrm dS}
\newcommand{\dd}{\,\mathrm d}
\newcommand{\weak}{\rightharpoonup}
\newcommand{\wstar}{\stackrel{*}{\rightharpoonup}}
\newcommand{\1}{\mathbf 1}
\newcommand{\calF}{\mathcal F}
\newcommand{\calE}{\mathcal E}
\newcommand{\calA}{\mathcal A}
\newcommand{\Dt}{{\mathcal D_T}}
\newcommand{\W}{\mathcal W}
\newcommand{\Ssig}{\mathcal S_\sigma}
\newcommand{\vu}{\bm{u}}
\newcommand{\vv}{\bm{v}}
\newcommand{\vw}{\bm{w}}
\newcommand{\vz}{\bm{z}}
\newcommand{\vomega}{\bm{\omega}}
\newcommand{\Ithree}{\mathsf I}
\newcommand{\vf}{\bm{f}}
\newcommand{\vg}{\bm{g}}
\newcommand{\vpsi}{\bm{\psi}}
\newcommand{\vxi}{\bm{\xi}}
\newcommand{\ve}{\bm{e}}
\newcommand{\vn}{\bm{n}}
\newcommand{\vh}{\bm{h}}
\newcommand{\vH}{\bm{H}}
\newcommand{\vG}{\bm{G}}
\newcommand{\vZ}{\bm{Z}}
\newcommand{\vr}{\bm{r}}
\newcommand{\cV}{\mathcal V}
\newcommand{\cH}{\mathcal H}
\allowdisplaybreaks[2]
\hypersetup{pdftitle={Critical homogenization of the Navier--Stokes--Cahn--Hilliard system in perforated domains},pdfsubject={Critical holes, phase-dependent viscosity, and the Brinkman law},pdfauthor={Jiaojiao Pan and Luqi Wang}}
\title{\Large\bfseries Critical homogenization of the Navier--Stokes--Cahn--Hilliard system in perforated domains}
\author{Jiaojiao Pan\footnote{School of Mathematics, Nanjing University, Nanjing 210093, China, panjiaojiao.math@gmail.com} \and Luqi Wang\footnote{School of Mathematics, Nanjing University, Nanjing 210093, China, wangluqi@nju.edu.cn}}
\date{}
\begin{document}
\maketitle
\vspace{-1.5em}
\begin{abstract}
We consider the three-dimensional incompressible Navier--Stokes--Cahn--Hilliard (NSCH) system in domains perforated by no-slip obstacles of diameter of order \(\varepsilon^3\) separated by distances of order \(\varepsilon\). Both viscosity and mobility may depend on the phase variable. At this critical Stokes-capacity scale, the limiting velocity \(\boldsymbol u\) and phase field \(\phi\) satisfy an NSCH system with the additional Brinkman resistance \(\nu(\phi)\mathbf B\boldsymbol u\), where \(\mathbf B\) is determined by the exterior Stokes capacity of the reference obstacle. The principal analytical difficulty is the interaction between the phase-dependent viscosity and the order-one energy concentration of the critical Stokes correctors. The variational chemical-potential identity yields, after scalar extension, strong convergence of the phase in \(L^2(0,T;H^1(\Omega))\), and hence strong convergence of the phase-dependent viscosity in the same topology. Combined with a cellwise Hardy multiplier estimate, this compactness can be transferred through the concentrated corrector layer. The resulting weighted-capacity statement applies to uniformly positive and bounded coefficient sequences converging strongly in \(L^2(0,T;H^1(\Omega))\) and simultaneously identifies the effective Brinkman force and the corresponding viscous dissipation lower bound. We also investigate a vanishing capillary coefficient \(\lambda_\varepsilon\to0\). After the natural velocity scaling, the limit is an unsteady Stokes--Brinkman equation coupled to an unadvected Cahn--Hilliard equation.
\end{abstract}
\noindent\textbf{Keywords.} Homogenization; Navier--Stokes--Cahn--Hilliard system; Critical perforations; Brinkman law; Stokes capacity.

\smallskip
\noindent\textbf{2020 Mathematics Subject Classification.} 35B27, 76M50, 35Q30, 76D05.

\section{Introduction}\label{sec:introduction}
In this paper, we study a diffuse-interface model for two incompressible fluids of matched density in a dilute perforated domain. The ambient set \(\Omega\) is either a bounded \(C^3\) domain in \(\mathbb R^3\) or the unit torus \(\mathbb T^3\). The fluid region \(\Omega_\varepsilon\) is obtained by removing periodically distributed no-slip obstacles whose diameter is of order \(\varepsilon^3\), while the inter-obstacle distance is of order \(\varepsilon\). Let \(T>0\). The unknowns are the velocity \(\boldsymbol u_\varepsilon\), pressure \(p_\varepsilon\), phase field \(\phi_\varepsilon\) and chemical potential \(\mu_\varepsilon\). The functions \(\nu\), \(m\) and \(F\) denote the viscosity, mobility and bulk free-energy density, respectively. The parameter \(\lambda_\varepsilon>0\) is the capillary coefficient and \(\boldsymbol f_\varepsilon\) is the external force. With \(D\boldsymbol v=(\nabla\boldsymbol v+\nabla\boldsymbol v^{\textup T})/2\), the microscopic system is
\begin{equation}\label{eq:micro}
 \left\{\begin{aligned}
 \partial_t \vu_\eps+\Div(\vu_\eps\otimes \vu_\eps)
 -\Div(2\nu(\phi_\eps)D\vu_\eps)+\nabla p_\eps
     &=\vf_\eps-\lambda_\eps\phi_\eps\nabla\mu_\eps,\\
 \Div \vu_\eps&=0,\\
 \partial_t\phi_\eps+\Div(\phi_\eps \vu_\eps)
     &=\Div(m(\phi_\eps)\nabla\mu_\eps),\\
 \mu_\eps&=-\Delta\phi_\eps+F'(\phi_\eps).
 \end{aligned}\right.
\end{equation}
The velocity \(\boldsymbol u_\varepsilon\) satisfies the no-slip condition on the solid boundaries, while \(\phi_\varepsilon\) and \(\mu_\varepsilon\) satisfy homogeneous Neumann conditions. The precise geometry and constitutive hypotheses are given in \cref{sec:formulation}; the matched-density diffuse-interface model and its energy-level weak formulation are discussed in \cite{Abels,GG}.

Writing the obstacle diameter as \(a_\varepsilon\sim\varepsilon^\alpha\), the three-dimensional Stokes-capacity threshold is \(\alpha=3\). A single obstacle contributes resistance of order \(a_\varepsilon\), whereas an \(\varepsilon\)-periodic array contains \(O(\varepsilon^{-3})\) obstacles, so the aggregate resistance remains of order one precisely at the critical exponent. The classical perforated-domain theory identifies the corresponding Brinkman mechanism for Stokes and Navier--Stokes flow \cite{AllaireI,AllaireII}; its evolutionary critical counterpart appears in \cite{FNN}, and a unified Stokes treatment is available in \cite{Lu2020}. Related critical limits for other fluid models are developed in \cite{BO2022,Pan2025,BLMO}. For \(1<\alpha<3\), the balance enters the Darcy regime and the effective scaling is different.

Phase-field homogenization has mainly been developed in pore-scale porous media. Multiple-scale analysis with drift leads to effective Stokes--Cahn--Hilliard equations in \cite{SPPK}, while two-scale and unfolding methods yield related evolutionary effective systems in \cite{BM,LM2022}. In the dilute setting, the NSCH system with phase-dependent viscosity and mobility has been analyzed for \(\alpha>3\), where the obstacle capacity disappears \cite{LPW}; a free-slip model with an oscillating viscosity tensor and a phase source is considered in \cite{CDM}. The present regime combines the no-slip critical capacity \(\alpha=3\) with a phase-dependent viscosity. Consequently, the corrector layer carries order-one energy and the viscosity coefficient must be identified inside a concentrated Stokes layer.

A comparable variable-coefficient effect occurs for heat-conducting fluids in critically perforated domains \cite{FLS}, where temperature regularity controls the coefficient in the Brinkman term. For the NSCH system, the chemical-potential identity supplies the required coefficient compactness:
\[
 \Eeps\phi_\eps\to\phi
 \quad\text{strongly in }L^2(0,T;H^1(\Omega)),
\]
and the assumption \(\nu\in C^1(\mathbb R)\), \(\|\nu'\|_\infty<\infty\), yields
\[
 \nu(\Eeps\phi_\eps)\to\nu(\phi)
 \quad\text{strongly in }L^2(0,T;H^1(\Omega)).
\]
A cellwise Hardy estimate matches this topology to the critical corrector gradients. \Cref{prop:weighted} isolates the resulting weighted-capacity pairing for uniformly positive and bounded coefficient sequences and supplies the Brinkman contribution to the viscous lower bound. The concentration example in \cref{rem:coefficientexample} shows why space--time \(L^2\) convergence alone cannot resolve the capacity layer.

The geometric part of the proof uses localized exterior Stokes fields together with divergence repairs; quantitative dilute-Stokes estimates of a related type appear in \cite{JLP}. Oscillating-test constructions in the Darcy range, such as \cite[Proposition~2.7]{HLO}, are formulated for \(1<\alpha<3\). The critical endpoint construction needed here is developed in \cref{sec:correctors}.

The same compactness and capacity mechanism also yields a second limit when \(\lambda_\varepsilon\to0\). With \(\boldsymbol v_\varepsilon=\lambda_\varepsilon^{-1/2}\boldsymbol u_\varepsilon\), the time derivative and viscous terms remain at order one, whereas convection and capillarity vanish; the limit is an unsteady Stokes--Brinkman equation coupled to an unadvected Cahn--Hilliard equation.

 \paragraph{{\bf Key innovations and difficulties.}} The main analytical contribution is the resolution of the interaction between the phase-dependent viscosity and the critical Stokes-capacity layer at the natural energy level. The chemical-potential identity upgrades the phase convergence from strong \(C([0,T];L^2(\Omega))\) and weak \(L^2(0,T;H^1(\Omega))\) convergence to strong \(L^2(0,T;H^1(\Omega))\) convergence, while the cellwise Hardy multiplier converts this information into a coefficient-replacement estimate for the concentrated corrector gradients. Their combination yields the weighted-capacity principle in \cref{prop:weighted}. For uniformly positive and bounded coefficient sequences converging strongly in \(L^2(0,T;H^1(\Omega))\), the weighted-capacity principle identifies both the variable Brinkman resistance in the momentum equation and the corresponding lower bound for the viscous dissipation. Feireisl, Lu and Sun \cite{FLS} exploit uniform spatial H\"older control of the temperature coefficient. Here the microscopic phase need not be spatially continuous: the chemical-potential identity yields strong convergence in \(L^2(0,T;H^1(\Omega))\), and a cellwise Hardy estimate transfers this Sobolev control to the critical Stokes layer. At the endpoint \(\alpha=3\), this coefficient--capacity interaction is the feature that separates the present argument from the small-hole NSCH regime \(\alpha>3\), where the obstacle capacity vanishes \cite{LPW}. The same compactness and corrector framework also accommodates the vanishing-capillarity scaling and leads to an unsteady Stokes--Brinkman/Cahn--Hilliard limit using the same geometric correctors.

 \paragraph{{\bf Outline of the paper.}} The paper is organized as follows. \Cref{sec:formulation} introduces the perforated geometry and function spaces, constructs the scalar extension operator, formulates the microscopic and effective weak problems, and states the main homogenization results. Uniform estimates and compactness of the phase field are developed in \cref{sec:phase}, culminating in the strong \(L^2(0,T;H^1(\Omega))\) convergence needed for the phase-dependent viscosity. The critical geometric analysis is carried out in \cref{sec:correctors}, where localized Stokes correctors, the Hardy multiplier estimate, and the weighted-capacity limit are established. Using these ingredients, \cref{sec:limit} proves velocity compactness, passes to the coupled NSCH--Brinkman limit, identifies the limiting dissipation, and reconstructs the pressure. The vanishing-capillarity regime and the resulting unsteady Stokes--Brinkman/Cahn--Hilliard system are treated in \cref{sec:vanishing}. Finally, \cref{app:existence} provides the fixed-\(\varepsilon\) existence theory for the microscopic finite-energy weak solutions used throughout the homogenization argument.

\section{Preliminaries and main results}\label{sec:formulation}
\subsection{Geometry and function spaces}\label{subsec:geometry}
Fix $T>0$. We consider a bounded connected domain $\Omega \subset \R^3$ of class $C^{3}$ or the unit torus $\T=\R^3/\mathbb Z^3$. We use the standard Lebesgue and Sobolev spaces, with $H^j=W^{j,2}$. The notation $C_w([0,T];X)$ denotes weak continuity with values in $X$. For a measurable set $G$ with $0<|G|<\infty$, let $|G|$ denote its volume and $\1_G$ its characteristic function. We write
\[
 (z)_G=\fint_G z\dx=|G|^{-1}\int_Gz\dx,
 \quad L^2_0(G)=\{z\in L^2(G):(z)_G=0\}.
\]
We use $(\cdot,\cdot)_G$ for the $L^2(G)$ inner product of scalar- or vector-valued functions. When the spatial domain is clear from the context, $\|\cdot\|_p$ denotes the corresponding $L^p$-norm, with the same convention for scalar-, vector-, and matrix-valued fields. When necessary, the domain is indicated explicitly by $\|\cdot\|_{L^p(G)}$. The symbol $\langle\cdot,\cdot\rangle$ denotes duality. For $x\in\R^3$ and $r>0$, $B(x,r)$ denotes the open ball of center $x$ and radius $r$; $\#J$ is the cardinality of a finite index set $J$.

All differential operators act on the spatial variable unless a time derivative is indicated. For a vector field $\vv$ and a matrix field $A$, we write
\[
 (\nabla \vv)_{ij}=\partial_jv_i,\quad
 D\vv=\tfrac12(\nabla \vv+\nabla \vv^{\textup T}),\quad
 (\Div A)_i=\sum_{j=1}^3\partial_j A_{ij},\quad
 \Delta=\sum_{j=1}^3\partial_j^2.
\]
Moreover, we write $(\vv\otimes\vw)_{ij}=\vv_i\vw_j$ and $A:B=\sum_{i,j}A_{ij}B_{ij}$. $\Ithree$ denotes the $3\times3$ identity matrix and $\Id$ the identity on a function space.

Let $\Dt=(0,T)\times\Omega$. We abbreviate $L^p(0,T;L^q(\Omega))$ and $L^p(0,T;H^1(\Omega))$ by $L^p_tL^q_x$ and $L^p_tH^1_x$, respectively; a different spatial domain is indicated explicitly. Constants denoted by $C$ are independent of the perforation parameter, but may depend on $T$ and the fixed data. We write $C_T$ when the dependence on the final time is emphasized. A constant $C_\eps$ may depend on a fixed perforated domain, and $C_{\vpsi}$ may additionally depend on finitely many smooth norms of a test field $\vpsi$; combined subscripts, such as $C_{\vpsi,T}$, record the corresponding fixed dependencies. For $\rho>0$, the notation $O(\rho)$ denotes a quantity bounded in magnitude by $C\rho$, uniformly in the cell index.

Let $K$ be the closure of a bounded $C^3$ domain containing the origin, with connected exterior, and assume $K\subset B(0,1/8)$. Fix $\beta>0$. For a sufficiently small real parameter $\eps>0$, define $a_\eps=\beta\eps^3$ and
\[
 \ell_\eps=
 \begin{cases}
 \eps,&\Omega\subset\R^3,\\
 \lceil\eps^{-1}\rceil^{-1},&\Omega=\T.
 \end{cases}
\]
Thus $\ell_\eps/\eps\to1$ and $a_\eps/\ell_\eps^3\to\beta$. On the torus, $N_\eps=\ell_\eps^{-1}$ is an integer. Put $x_{\eps,k}=\ell_\eps k$ and
\[
 Q_{\eps,k}=x_{\eps,k}+\ell_\eps(-1/2,1/2)^3.
\]
In a bounded domain, let $\Ie=\{k\in\mathbb Z^3:\overline{Q_{\eps,k}}\subset\Omega\}$. On the torus, take $\Ie=\{0,\ldots,N_\eps-1\}^3$, with centers and cells understood modulo $\mathbb Z^3$. Define the individual obstacles, solid set, fluid domain and fluid indicator by
\begin{equation}\label{eq:geometry}
 K_{\eps,k}=x_{\eps,k}+a_\eps K,\quad
 \He=\bigcup_{k\in\Ie}K_{\eps,k},\quad
 \Oe=\Omega\setminus\He,\quad \chi_\eps=\1_{\Oe}.
\end{equation}
For $\eps$ small enough, the holes are disjoint and $\Oe$ is connected. Indeed, if $R_K=\sup_{y\in K}|y|<1/8$, then $K_{\eps,k}\subset B(x_{\eps,k},R_Ka_\eps)$ and distinct inclusions satisfy $\operatorname{dist}(K_{\eps,k},K_{\eps,k'})\ge \ell_\eps-2R_Ka_\eps\ge \ell_\eps/2$ for sufficiently small $\eps$. Each inclusion is therefore confined to the interior of its own cell, and the connected exterior of the reference obstacle leaves neighboring fluid-cell regions connected through the cell faces. Moreover,
\[
 \#\Ie\leq C\ell_\eps^{-3},\quad
 |\He|\leq C\ell_\eps^6,\quad
 \Big|\Omega\setminus\bigcup_{k\in\Ie}Q_{\eps,k}\Big|
 \leq C\ell_\eps.
\]
The last estimate concerns the boundary strip in a bounded domain. On the torus the cells cover $\Omega$ up to their faces, which is a measure-zero set. Cell sums below range over $k\in\Ie$.

The ambient velocity spaces are
\[
 \W=\begin{cases}
 H^1_0(\Omega;\R^3),&\Omega\subset\R^3,\\
 H^1(\T;\R^3),&\Omega=\T,
 \end{cases}
 \quad \cV=\{\vv\in\W:\Div \vv=0\},\quad
 \cH=\overline \cV^{\,L^2(\Omega;\R^3)}.
\]
Let $\W_\eps$ be $H^1_0(\Oe;\R^3)$ in a bounded domain and the space of periodic $H^1(\Oe;\R^3)$ fields with zero trace on every hole on the torus. Set
\[
 \cV_\eps=\{\vv\in\W_\eps:\Div \vv=0\},\quad
 \cH_\eps=\overline{\cV_\eps}^{\,L^2(\Oe;\R^3)}.
\]
The zero extension of a vector field $\vv$ on $\Oe$ is defined by
\[
 \widetilde \vv(x)=
 \begin{cases}\vv(x),&x\in\Oe,\\0,&x\in\He.\end{cases}
\]
Zero extension maps $\cV_\eps$ isometrically into $\cV$ in the $H^1$ norm and, by closure, maps $\cH_\eps$ into $\cH$. Thus the initial $L^2$ velocity is incorporated through the closure defining $\cH_\eps$. For scalar fields or their gradients defined only in $\Oe$, the notation $\chi_\eps z$ likewise means zero filling. This operation is distinct from the scalar Sobolev extension in \cref{subsec:extension}.

The smooth solenoidal test space is
\[
 \Ssig=\begin{cases}
 \{\vpsi\in C_c^\infty(\Omega;\R^3):\Div\vpsi=0\},
       &\Omega\subset\R^3,\\
 \{\vpsi\in C^\infty(\T;\R^3):\Div\vpsi=0\},
       &\Omega=\T.
 \end{cases}
\]
In the toroidal case all fields are periodic and constant vector fields belong to $\Ssig$. The standard solenoidal density theorem gives density of $\Ssig$ in $\cV$ for the $H^1$ norm, and hence in $\cH$ for the $L^2$ norm. Smooth time-dependent tests in a bounded domain have a common compact spatial support.

\subsection{Microscopic model and constitutive assumptions}\label{subsec:model}
The microscopic evolution is given by \eqref{eq:micro} on the domains \eqref{eq:geometry}. For fixed constants $0<\nu_*\le\nu^*$ and $0<m_*\le m^*$, the constitutive functions satisfy
\begin{equation}\label{eq:constitutive}
 \begin{gathered}
 F(s)=\tfrac14(s^2-1)^2,\qquad s\in\R,\\
 \nu\in C^1(\R),\qquad
 \nu_*\leq\nu(s)\leq\nu^*\ \text{for }s\in\R,
 \qquad \|\nu'\|_{L^\infty(\R)}<\infty,\\
 m\in C^{0,1}(\R),\qquad
 m_*\leq m(s)\leq m^*\ \text{for }s\in\R.
 \end{gathered}
\end{equation}
These constants are independent of $\eps$, and $C^{0,1}(\R)$ denotes the space of globally Lipschitz continuous functions on $\R$. The capillary coefficient $\lambda_\eps>0$ is independent of time and space.
Let $\vn$ be the outward unit normal of the fluid domain and write $\partial_n=\vn\cdot\nabla$. On every solid boundary we impose
\begin{equation}\label{eq:boundary}
 \vu_\eps=0,\quad \partial_n\phi_\eps=0,\quad
 m(\phi_\eps)\partial_n\mu_\eps=0.
\end{equation}
These conditions also hold at the outer wall in the bounded-domain case and all fields are periodic in the toroidal case. The scalar conditions are understood variationally. The interfacial length in the chemical-potential equation is fixed and has been normalized to one.

The quartic potential is used through the bound $F'(\phi_\eps)\in L^\infty(0,T;L^2(\Oe))$, which follows from the phase-energy bound and the uniform Sobolev inequality proved below. This makes the phase-difference test in the chemical-potential identity available at the energy level. The $C^1$ assumption on $\nu$ is used to pass to the limit in its Sobolev derivative, whereas the mobility enters the compactness argument through bounded continuity. 

Throughout the paper, the Newtonian stress is written as $2\nu(\phi_\eps)D\vu_\eps$. For comparison with the alternative convention in which the same stress is written as $\nu_{\mathrm{ref}}(\phi_\eps)D\vu_\eps$, we set $\nu_{\mathrm{ref}}:=2\nu$. Thus $\nu_{\mathrm{ref}}$ is an alternative normalization of the same viscosity coefficient. \Cref{cor:spheres} gives the spherical-obstacle resistance in both conventions.

For $G=\Omega$ or $G=\Oe$, $\lambda>0$, $\vv\in L^2(G;\R^3)$ and $\varphi\in H^1(G)$, define
\[
 \calF_G(\varphi)=\int_G\left(\tfrac12|\nabla\varphi|^2+F(\varphi)\right)\dx,
 \quad
 \calE_{G,\lambda}(\vv,\varphi)=\tfrac12\|\vv\|_{L^2(G)}^2+\lambda\calF_G(\varphi).
\]

Suppose the initial velocity and phase satisfy $(\vu_\eps^0,\phi_\eps^0)\in \cH_\eps\times H^1(\Oe)$ and the force satisfies $\vf_\eps\in L^2(0,T;L^2(\Oe;\R^3))$.

\subsection{The scalar extension operator}\label{subsec:extension}
The phase compactness is formulated on the fixed ambient domain $\Omega$. Velocity fields are handled by zero extension, whereas the scalar $H^1$ estimates require an extension across the hole interiors that is uniform in $\eps$. Accordingly, we construct a scalar extension operator and record the local estimates used later in the chemical-potential argument.

Set $B_0=B(0,1/2)$ and $U=B_0\setminus K$. Choose a bounded linear Sobolev extension $\mathfrak T:H^1(U)\to H^1(B_0)$ satisfying $(\mathfrak T w)|_U=w$. Define its constant-preserving version by
\begin{equation}\label{eq:referenceextension}
 \mathfrak E w=\mathfrak T\big(w-(w)_U\big)+(w)_U,
 \quad w\in H^1(U).
\end{equation}
For $z\in H^1(\Oe)$, let $z_{\eps,k}(y)=z(x_{\eps,k}+a_\eps y)$ for $y\in U$. We define
\begin{equation}\label{eq:extensiondefinition}
 \Eeps:H^1(\Oe)\to H^1(\Omega),\quad
 (\Eeps z)(x)=
 \begin{cases}
 z(x),&x\in\Oe,\\
 (\mathfrak E z_{\eps,k})\big((x-x_{\eps,k})/a_\eps\big),
      &x\in K_{\eps,k},\ k\in\Ie.
 \end{cases}
\end{equation}
The traces agree at each hole boundary. The operator is fixed in time and acts on time-dependent functions pointwise in time. Let
\[
 \calA_\eps=\bigcup_{k\in\Ie}
       \big(B(x_{\eps,k},a_\eps/2)\setminus K_{\eps,k}\big).
\]
These neighborhoods are disjoint and $|\calA_\eps|\leq C\ell_\eps^6$.

The local construction provides uniform scalar inequalities and controls the gradient energy assigned to the holes by the extension.
\begin{lemma}[Local extension and scalar inequalities]\label{lem:extension}
The linear operator defined by \eqref{eq:extensiondefinition} preserves constants,
restricts to the identity on $\Oe$,
and satisfies
\begin{align}
 \|\Eeps z\|_{L^2(\Omega)}^2
 &\leq C\big(\|z\|_{L^2(\Oe)}^2+a_\eps^2\|\nabla z\|_{L^2(\Oe)}^2\big),\label{eq:extensionL2}\\
 \|\nabla \Eeps z\|_{L^2(\Omega)}&\leq C\|\nabla z\|_{L^2(\Oe)},\notag\\
 \int_{\He}|\nabla \Eeps z|^2\dx
 &\leq C\int_{\calA_\eps}|\nabla z|^2\dx.\label{eq:extensionlocal}
\end{align}
In particular, $\|\Eeps z\|_{H^1(\Omega)}\leq C\|z\|_{H^1(\Oe)}$. Uniformly in $\eps$,
\begin{equation}\label{eq:scalarSP}
 \|z\|_{L^6(\Oe)}\leq C\|z\|_{H^1(\Oe)},\quad
 \|z-(z)_{\Oe}\|_{L^2(\Oe)}\leq C\|\nabla z\|_{L^2(\Oe)}.
\end{equation}
\end{lemma}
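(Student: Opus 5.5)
The plan is to work at the reference scale and then rescale cell by cell. The structural properties come first. Linearity follows from linearity of $\mathfrak T$ and of the mean. Constants are preserved because for $w\equiv c$ we have $w-(w)_U=0$, so $\mathfrak E w=c$. The restriction to $\Oe$ is the identity by definition. Membership in $H^1(\Omega)$ holds because $(\mathfrak T w)|_U=w$. Hence the extended function has matching traces on $\partial K_{\eps,k}$ from both sides, and gluing $H^1$ functions with equal traces across a Lipschitz ($C^3$) interface gives an $H^1$ function.

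For the reference estimates on $U$, boundedness of $\mathfrak T$ and the Poincaré--Wirtinger inequality on the connected Lipschitz set $U$ give
\[
\|\nabla\mathfrak E w\|_{L^2(K)}\le\|\mathfrak T(w-(w)_U)\|_{H^1(B_0)}\le C\|w-(w)_U\|_{H^1(U)}\le C\|\nabla w\|_{L^2(U)} .
\]
Similarly,
\[
\|\mathfrak E w\|_{L^2(K)}\le C\|\nabla w\|_{L^2(U)}+|K|^{1/2}|(w)_U|\le C\|w\|_{H^1(U)} .
\]
We then rescale with $x=x_{\eps,k}+a_\eps y$. Gradients in $L^2$ scale as $\|\nabla_x\cdot\|^2_{L^2}=a_\eps\|\nabla_y\cdot\|^2$, and $L^2$ norms as $a_\eps^{3}\|\cdot\|^2$. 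This gives
\[
\int_{K_{\eps,k}}|\nabla\Eeps z|^2\le C\int_{B(x_{\eps,k},a_\eps/2)\setminus K_{\eps,k}}|\nabla z|^2 .
\]
Summing over $k$ yields \eqref{eq:extensionlocal}, and adding the fluid part gives the gradient bound. For \eqref{eq:extensionL2}, scaling the $L^2$ estimate gives
\[
\int_{K_{\eps,k}}|\Eeps z|^2\le C\Big(\int_{U_{\eps,k}}|z|^2+a_\eps^2\int_{U_{\eps,k}}|\nabla z|^2\Big),
\]
with $U_{\eps,k}=x_{\eps,k}+a_\eps U$. Summing over the disjoint sets $U_{\eps,k}$ and adding $\|z\|_{L^2(\Oe)}^2$ finishes the estimate. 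The $H^1$ bound follows by combining the two, using $a_\eps\le1$.

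For \eqref{eq:scalarSP}, the Sobolev inequality follows from the $H^1(\Omega)$ Sobolev embedding applied to $\Eeps z$ and restricted to $\Oe$, together with the uniform bound $\|\Eeps z\|_{H^1(\Omega)}\le C\|z\|_{H^1(\Oe)}$. This step is the main point needing care, since constants must not depend on $\eps$.

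For the Poincaré inequality, let $c=(\Eeps z)_\Omega$ and apply Poincaré--Wirtinger on the fixed domain $\Omega$ (connected $C^3$, or the torus):
\[
\|\Eeps z-c\|_{L^2(\Omega)}\le C\|\nabla\Eeps z\|_{L^2(\Omega)}\le C\|\nabla z\|_{L^2(\Oe)} .
\]
Since $(z)_\Oe$ minimizes $\|z-\kappa\|_{L^2(\Oe)}$ over constants $\kappa$, we get
\[
\|z-(z)_\Oe\|_{L^2(\Oe)}\le\|z-c\|_{L^2(\Oe)}\le\|\Eeps z-c\|_{L^2(\Omega)} ,
\]
which gives the claim.

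The only subtle points are the reference-scale Poincaré inequality on $U$, which requires $U$ to be connected, and the trace-matching gluing. $U$ is connected because $K$ has connected exterior and $K\subset B(0,1/8)$, so the annular region near $\partial B_0$ connects everything. The scaling argument itself is routine.
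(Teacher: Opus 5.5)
Your proof is correct. For the extension estimates and the Sobolev inequality it follows the paper's route: reference-scale bounds on $U$ from boundedness of $\mathfrak T$ and Poincar\'e on the connected set $U$, then the rescaling $x=x_{\eps,k}+a_\eps y$ and summation over the disjoint sets $U_{\eps,k}$.

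You genuinely depart from the paper in the uniform Poincar\'e--Wirtinger inequality in \eqref{eq:scalarSP}. The paper argues by contradiction. It takes $\eps_j\to0$ and mean-zero $z_j$ with $\|z_j\|_{L^2}=1$ and $\|\nabla z_j\|_{L^2}\to0$. It then extracts a Rellich limit of $\mathscr E_{\eps_j}z_j$, which must be a constant, and shows that this constant is zero because $|\mathcal H_{\eps_j}|\to0$. You instead apply Poincar\'e--Wirtinger on the fixed connected domain $\Omega$ (or $\T$) to $\Eeps z$ with $c=(\Eeps z)_\Omega$. You then use that $(z)_{\Oe}$ minimizes $\|z-\kappa\|_{L^2(\Oe)}$ over constants $\kappa$, together with $\Eeps z=z$ on $\Oe$.

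Both arguments rest on the same key ingredient, the gradient-only bound $\|\nabla\Eeps z\|_{L^2(\Omega)}\le C\|\nabla z\|_{L^2(\Oe)}$, which is where constant preservation of $\mathfrak E$ matters. Your version is shorter and constructive. It gives the explicit constant (ambient Poincar\'e constant times extension constant) for every $\eps$ at which the extension estimate holds, rather than below an unspecified threshold. It also needs neither compactness nor the vanishing hole volume. The paper's contradiction argument gains nothing extra in this setting beyond being a familiar template.

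Your justification that $U$ is connected, by connecting each point to the annulus $\{1/8<|y|<1/2\}$ through a path in $\R^3\setminus K$ stopped at its first exit from $B(0,1/4)$, also fills in a step the paper only asserts.
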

\begin{proof}
The reference fluid set $U$ is connected and Lipschitz. Put $w_0=w-(w)_U$. Since $w_0=0$ for constant $w$, formula \eqref{eq:referenceextension} shows at once that $\mathfrak E$ preserves constants. The boundedness of $\mathfrak T$ and the Poincar\'e inequality on this fixed set give
\[
 \|w_0\|_{H^1(U)}\leq C\|\nabla w\|_{L^2(U)},\qquad
 \|\nabla\mathfrak E w\|_{L^2(B_0)}
 =\|\nabla\mathfrak T w_0\|_{L^2(B_0)}
 \leq C\|\nabla w\|_{L^2(U)}.
\]
Moreover, $|(w)_U|\leq |U|^{-1/2}\|w\|_{L^2(U)}$, so
\[
 \|\mathfrak E w\|_{L^2(B_0)}^2
 \leq C\big(\|w\|_{L^2(U)}^2+\|\nabla w\|_{L^2(U)}^2\big).
\]
Write $U_{\eps,k}=x_{\eps,k}+a_\eps U$. The change of variables
$x=x_{\eps,k}+a_\eps y$ gives
\[
 \begin{aligned}
 \int_{K_{\eps,k}}|\Eeps z|^2\dx
 &\leq Ca_\eps^3
     \big(\|z_{\eps,k}\|_{L^2(U)}^2
               +\|\nabla_yz_{\eps,k}\|_{L^2(U)}^2\big)=C\left(\int_{U_{\eps,k}}|z|^2\dx
            +a_\eps^2\int_{U_{\eps,k}}|\nabla z|^2\dx\right),\\
 \int_{K_{\eps,k}}|\nabla\Eeps z|^2\dx
 &\leq Ca_\eps\|\nabla_yz_{\eps,k}\|_{L^2(U)}^2
 =C\int_{U_{\eps,k}}|\nabla z|^2\dx.
 \end{aligned}
\]
Since the neighborhoods $U_{\eps,k}$ are disjoint and the extension equals $z$ in $\Oe$, summing proves \eqref{eq:extensionL2}--\eqref{eq:extensionlocal}, together with the global gradient bound. The fixed-domain Sobolev embedding then gives
\[
 \|z\|_{L^6(\Oe)}\leq\|\Eeps z\|_{L^6(\Omega)}
 \leq C\|\Eeps z\|_{H^1(\Omega)}
 \leq C\|z\|_{H^1(\Oe)}.
\]

For the mean-zero estimate, suppose no common Poincar\'e constant exists for all sufficiently small $\eps$. There are then $\eps_j\to0$ and $z_j\in H^1(\Omega_{\eps_j})$ such that
\[
 \int_{\Omega_{\eps_j}}z_j\dx=0,\qquad
 \|z_j\|_{L^2(\Omega_{\eps_j})}=1,\qquad
 \|\nabla z_j\|_{L^2(\Omega_{\eps_j})}\to0.
\]
The extension estimates already proved imply that
$Z_j=\mathscr E_{\eps_j}z_j$ is bounded in $H^1(\Omega)$ and
$\nabla Z_j\to0$ in $L^2(\Omega)$. Rellich compactness on the connected ambient domain yields, after extraction, $Z_j\to c$ in $L^2(\Omega)$ for a constant $c$. The mean-zero condition identifies this constant:
\[
 \left|\int_\Omega Z_j\dx\right|
 =\left|\int_{\mathcal H_{\eps_j}}Z_j\dx\right|
 \leq |\mathcal H_{\eps_j}|^{1/2}\|Z_j\|_{L^2(\Omega)}\to0.
\]
Consequently $c|\Omega|=0$ and $c=0$. On the other hand,
\[
 \|Z_j\|_{L^2(\Omega)}\geq
 \|z_j\|_{L^2(\Omega_{\eps_j})}=1,
\]
which contradicts $Z_j\to0$ in $L^2(\Omega)$. This proves the second estimate in \eqref{eq:scalarSP}.
\end{proof}

Two consequences of the construction will be used below. For $z\in H^1(\Omega)$, set $d_\eps=\Eeps(z|_{\Oe})-z$. The difference is supported in $\He$ and has zero trace on each hole boundary. The local estimate and the rescaled Dirichlet Poincar\'e inequality give
\begin{equation}\label{eq:fixedscalarrecovery}
 \|\nabla d_\eps\|_2^2\leq C\int_{\calA_\eps\cup\He}|\nabla z|^2\dx, \quad \|d_\eps\|_2\leq Ca_\eps\|\nabla d_\eps\|_2.
\end{equation}
Absolute continuity of the integral implies $\Eeps(z|_{\Oe})\to z$ in $H^1(\Omega)$. For a microscopic scalar $z\in H^1(\Oe)$, the difference between Sobolev extension and zero filling satisfies
\begin{equation}\label{eq:fillingerror}
 \|\Eeps z-\chi_\eps z\|_{L^2(\Omega)}
 \leq |\He|^{1/3}\|\Eeps z\|_{L^6(\Omega)}
 \leq C\ell_\eps^2\|z\|_{H^1(\Oe)}.
\end{equation}
Thus $\Eeps z$ and $\chi_\eps z$ agree on $\Oe$ and differ only inside the holes, with the $L^2(\Omega)$ discrepancy quantified by \eqref{eq:fillingerror}. Consequently, time compactness of the zero-filled phase transfers to the Sobolev extension while the latter retains the uniform spatial $H^1$ bounds.

\subsection{Finite-energy weak solution}\label{subsec:weak}
With the fixed-domain scalar extension available, we now specify the microscopic energy solution class used throughout the homogenization argument.

\begin{definition}[Finite-energy weak solution]\label{def:weak}
A triple $(\vu_\eps,\phi_\eps,\mu_\eps)$ is called a finite-energy weak solution of \eqref{eq:micro} with the boundary conditions \eqref{eq:boundary} if it has the regularity

\[
 \begin{gathered}
 \vu_\eps\in L^\infty(0,T;\cH_\eps)\cap L^2(0,T;\cV_\eps)
                   \cap C_w([0,T];\cH_\eps),\\
 \phi_\eps\in L^\infty(0,T;H^1(\Oe))\cap C([0,T];L^2(\Oe)),
 \quad \mu_\eps\in L^2(0,T;H^1(\Oe)),\\
 \partial_t\vu_\eps\in L^{4/3}(0,T;\cV_\eps'),\quad
 \partial_t\phi_\eps\in L^2(0,T;(H^1(\Oe))');
 \end{gathered}
\]
for $\vpsi\in C^1([0,T];\cV_\eps)$ with $\vpsi(T)=0$, it satisfies
\[
 \begin{aligned}
 -\int_0^T\!\int_\Oe \vu_\eps\cdot\partial_t\vpsi\dx\dt
 -\int_0^T\!\int_\Oe(\vu_\eps\otimes \vu_\eps):\nabla\vpsi\dx\dt
 +\int_0^T\!\int_\Oe2\nu(\phi_\eps)D\vu_\eps:D\vpsi\dx\dt\\
 =\int_0^T\!\int_\Oe(\vf_\eps-\lambda_\eps\phi_\eps\nabla\mu_\eps)\cdot\vpsi\dx\dt
  +\int_\Oe \vu_\eps^0\cdot\vpsi(0)\dx;
 \end{aligned}
\]
for $\zeta\in C^1([0,T];H^1(\Oe))$ with $\zeta(T)=0$, and for $\eta\in L^2(0,T;H^1(\Oe))$, respectively,
\begin{equation}\label{eq:weakCH}
 \begin{aligned}
 &-\int_0^T\!\int_\Oe\phi_\eps\partial_t\zeta\dx\dt
 -\int_0^T\!\int_\Oe\phi_\eps \vu_\eps\cdot\nabla\zeta\dx\dt\\
 &\quad+\int_0^T\!\int_\Oe m(\phi_\eps)\nabla\mu_\eps\cdot\nabla\zeta\dx\dt
 =\int_\Oe\phi_\eps^0\zeta(0)\dx,
 \end{aligned}
\end{equation}
\begin{equation}\label{eq:weakchemical}
 \int_0^T\!\int_\Oe\mu_\eps\eta\dx\dt
 =\int_0^T\!\int_\Oe
       \big(\nabla\phi_\eps\cdot\nabla\eta+F'(\phi_\eps)\eta\big)\dx\dt;
\end{equation}
the initial traces agree with $(\vu_\eps^0,\phi_\eps^0)$, and for almost every $t\in(0,T)$,
\[
 \begin{aligned}
 \calE_{\Oe,\lambda_\eps}(\vu_\eps(t),\phi_\eps(t))
 +\int_0^t\!\int_\Oe
   \big(2\nu(\phi_\eps)|D\vu_\eps|^2
       +\lambda_\eps m(\phi_\eps)|\nabla\mu_\eps|^2\big)\dx\dd s\\
 \leq\calE_{\Oe,\lambda_\eps}(\vu_\eps^0,\phi_\eps^0)
       +\int_0^t\!\int_\Oe \vf_\eps\cdot \vu_\eps\dx\dd s.
 \end{aligned}
\]
\end{definition}

\begin{remark}
The scalar test functions have unrestricted traces, so the homogeneous Neumann conditions for \(\phi_\varepsilon\) and \(\mu_\varepsilon\) are encoded by the absence of boundary terms in \eqref{eq:weakCH}--\eqref{eq:weakchemical}. In a bounded domain these scalar conditions hold on the whole boundary \(\partial\Omega_\varepsilon\), including the outer wall and the velocity satisfies the no-slip condition there through \(\mathcal W_\varepsilon=H^1_0(\Omega_\varepsilon;\mathbb R^3)\). On the torus, the velocity is periodic with no-slip trace on the holes.
A phase with the regularity in \cref{def:weak} has a representative in $C_w([0,T];H^1(\Oe))$: boundedness in $H^1$ and continuity in $L^2$ identify every weak $H^1$ time limit. In particular, its $H^1$ bound may be taken at every time. The momentum identity is written with solenoidal test fields and the corresponding macroscopic pressure is reconstructed in \cref{sec:limit}.
\end{remark}

\subsection{Stokes capacity and the coupled limit}\label{subsec:main}
The critical geometry enters the limit through the Stokes capacity of the reference obstacle. Let $\ve_1,\ve_2,\ve_3$ be the standard basis of $\R^3$. For $i=1,2,3$, let $(\vh^i,q^i)$ be the finite-energy exterior Stokes velocity--pressure pair, with $\vh^i\in L^6(\R^3\setminus K;\R^3)$ and $\nabla \vh^i\in L^2(\R^3\setminus K;\R^{3\times3})$, satisfying
\begin{equation}\label{eq:exterior}
 \left\{\begin{aligned}
 &-\Delta \vh^i+\nabla q^i=0,\quad \Div \vh^i=0 &&\text{in }\R^3\setminus K,\\
& \vh^i=\ve_i&&\text{on }\partial K,\\
& \vh^i(x)\to0&&\text{as }|x|\to\infty.
 \end{aligned}\right.
\end{equation}
The additive constant in $q^i$ is fixed by decay at infinity. The unit-viscosity Stokes capacity matrix and the macroscopic resistance matrix are
\begin{equation}\label{eq:capacity}
 \M_{ij}=\int_{\R^3\setminus K}2D\vh^i:D\vh^j\dx,
 \quad \B=\beta\M.
\end{equation}
The matrix $\M$ is symmetric and positive definite, as verified in \cref{sec:correctors}. It is determined by the reference obstacle $K$ and the unit-viscosity normalization. The macroscopic cell scale enters through the factor $\beta$ in $\B=\beta\M$.

We assume that, for some $\vu^0\in \cH$, $\phi^0\in H^1(\Omega)$ and $\vf\in L^2_tL^2_x$,
\begin{equation}\label{eq:data}
 \widetilde \vu_\eps^0\to \vu^0\text{ in }L^2(\Omega),\quad
 \Eeps\phi_\eps^0\to\phi^0\text{ in }H^1(\Omega),\quad
 \widetilde \vf_\eps\to \vf\text{ in }L^2_tL^2_x.
\end{equation}
These assumptions imply convergence of the initial energies when $\lambda_\eps\to\lambda>0$. Indeed, with $\Phi_\eps^0=\Eeps\phi_\eps^0$, strong $H^1(\Omega)$ convergence gives strong $L^4(\Omega)$ convergence and
\[
 \begin{aligned}
 \int_{\He}|\nabla\Phi_\eps^0|^2\dx
 &\leq 2\int_{\He}|\nabla\Phi_\eps^0-\nabla\phi^0|^2\dx
      +2\int_{\He}|\nabla\phi^0|^2\dx\\
 &\leq 2\|\nabla\Phi_\eps^0-\nabla\phi^0\|_{L^2(\Omega)}^2
      +2\int_{\He}|\nabla\phi^0|^2\dx\to0.
 \end{aligned}
\]
Also $F(\Phi_\eps^0)\to F(\phi^0)$ in $L^1(\Omega)$, so its integral over $\He$ vanishes. Therefore
\[
 \calE_{\Oe,\lambda_\eps}(\vu_\eps^0,\phi_\eps^0)
 \to\calE_{\Omega,\lambda}(\vu^0,\phi^0).
\]
These assumptions are compatible with arbitrary data in the stated limiting spaces. By \eqref{eq:fixedscalarrecovery}, the phase $\phi^0$ can be restricted to $\Oe$. For the velocity, a smooth solenoidal approximation followed by the correction constructed in \cref{sec:correctors} yields admissible initial data by a diagonal choice.

Write $(\vu,\phi,\mu)$ for the prospective limit variables and let $p$ denote the macroscopic pressure. The capacity matrix in \eqref{eq:capacity} leads to the effective NSCH--Brinkman system
\begin{equation}\label{eq:limit}
 \left\{\begin{aligned}
 \partial_t\vu+\Div(\vu\otimes \vu)-\Div(2\nu(\phi)D\vu)
       +\nu(\phi)\B \vu+\nabla p&=\vf-\lambda\phi\nabla\mu,\\
 \Div \vu&=0,\\
 \partial_t\phi+\Div(\phi \vu)&=\Div(m(\phi)\nabla\mu),\\
 \mu&=-\Delta\phi+F'(\phi).
 \end{aligned}\right.
\end{equation}
The solenoidal weak formulation will be stated later and the pressure is reconstructed from that formulation in \cref{sec:limit}.

\begin{definition}[Finite-energy weak solution of the effective system]\label{def:effectiveweak}
Let \(\lambda>0\), \(\vu^0\in\cH\), \(\phi^0\in H^1(\Omega)\) and \(\vf\in L^2(0,T;L^2(\Omega;\mathbb R^3))\). A triple \((\vu,\phi,\mu)\) is a finite-energy weak solution of the effective NSCH--Brinkman system if
\[
\begin{gathered}
 \vu\in L^\infty(0,T;\cH)\cap L^2(0,T;\cV)\cap C_w([0,T];\cH),\\
 \phi\in L^\infty(0,T;H^1(\Omega))\cap C([0,T];L^2(\Omega)),
 \qquad \mu\in L^2(0,T;H^1(\Omega)),\\
 \partial_t\vu\in L^{4/3}(0,T;\cV'),\qquad
 \partial_t\phi\in L^2(0,T;(H^1(\Omega))').
\end{gathered}
\]
For any \(\vpsi\in C^1([0,T];\cV)\) with \(\vpsi(T)=0\),
\[
 \begin{aligned}
 &-\int_{\Dt}\vu\cdot\partial_t\vpsi\dx\dt
  -\int_{\Dt}(\vu\otimes\vu):\nabla\vpsi\dx\dt+\int_{\Dt}
       \big(2\nu(\phi)D\vu:D\vpsi+\nu(\phi)\B\vu\cdot\vpsi\big)\dx\dt\\
 &=\int_{\Dt}(\vf-\lambda\phi\nabla\mu)\cdot\vpsi\dx\dt
     +\int_\Omega\vu^0\cdot\vpsi(0)\dx.
 \end{aligned}
\]
For any \(\zeta\in C^1([0,T];H^1(\Omega))\) with \(\zeta(T)=0\) and any \(\eta\in L^2(0,T;H^1(\Omega))\), the scalar variables satisfy
\[
\begin{aligned}
 &-\int_{\Dt}\phi\,\partial_t\zeta\,\dx\dt
 -\int_{\Dt}\phi\vu\cdot\nabla\zeta\,\dx\dt+\int_{\Dt}m(\phi)\nabla\mu\cdot\nabla\zeta\,\dx\dt
 =\int_\Omega\phi^0\zeta(0)\,\dx,
\end{aligned}
\]
\[
\int_{\Dt}\mu\eta\,\dx\dt
 =\int_{\Dt}\big(\nabla\phi\cdot\nabla\eta+F'(\phi)\eta\big)\,\dx\dt.
 \]
In a bounded domain these scalar identities encode the homogeneous Neumann conditions for \(\phi\) and \(\mu\) in the variational sense: the scalar test functions have unrestricted boundary traces and no boundary functional appears. The initial traces equal \((\vu^0,\phi^0)\), and for almost every \(t\in(0,T)\),
\begin{equation}\label{eq:limitenergy}
 \begin{aligned}
 \calE_{\Omega,\lambda}(\vu(t),\phi(t))
 +\int_0^t\!\int_\Omega
 \big(2\nu(\phi)|D\vu|^2+\nu(\phi)\B\vu\cdot\vu
       +\lambda m(\phi)|\nabla\mu|^2\big)\dx\dd s\\
 \leq\calE_{\Omega,\lambda}(\vu^0,\phi^0)
       +\int_0^t\!\int_\Omega\vf\cdot\vu\dx\dd s.
 \end{aligned}
\end{equation}
\end{definition}

For every fixed sufficiently small \(\varepsilon\), the microscopic finite-energy solution class is nonempty by \cref{prop:existence}. With the microscopic and effective solution classes fixed, the critical homogenization result can now be stated.

\begin{theorem}[Critical NSCH homogenization]\label{thm:main}
Let the geometry and constitutive functions satisfy \crefrange{subsec:geometry}{subsec:model}. Assume \eqref{eq:data} and $\lambda_\eps\to\lambda\in(0,\infty)$. Let $(\vu_\eps,\phi_\eps,\mu_\eps)$ be finite-energy weak solutions on a fixed interval $[0,T]$.
As $\eps\to0$, up to a subsequence,
\begin{equation}\label{eq:mainconvergences}
 \begin{aligned}
 \widetilde \vu_\eps&\wstar \vu&&\text{in }L^\infty(0,T;\cH),
 &\widetilde \vu_\eps&\weak \vu&&\text{in }L^2(0,T;\cV),\\
 \widetilde \vu_\eps&\to \vu&&\text{in }L^2_tL^2_x,
 &\Eeps\phi_\eps&\to\phi&&\text{in }L^2(0,T;H^1(\Omega)),\\
 \Eeps\phi_\eps&\wstar\phi&&\text{in }L^\infty(0,T;H^1(\Omega)),
 &\Eeps\mu_\eps&\weak\mu&&\text{in }L^2(0,T;H^1(\Omega)).
 \end{aligned}
\end{equation}
Moreover,
\[
 \nu(\Eeps\phi_\eps)\to\nu(\phi)\quad\text{strongly in }L^2(0,T;H^1(\Omega)).
\]

The limit \((\vu,\phi,\mu)\) is a finite-energy weak solution in the sense of \cref{def:effectiveweak}. There exists a pressure distribution \(p\) such that \((\vu,\phi,\mu,p)\) satisfies \eqref{eq:limit} with initial data $(\vu^0,\phi^0)$. In a bounded domain, $\vu=0$ on $\partial\Omega$ and the homogeneous scalar Neumann conditions hold in the variational sense specified in \cref{def:effectiveweak}. On $\T$, all fields are periodic. The phase mass is conserved. Along the same subsequence,
\begin{equation}\label{eq:strongtimetraces}
 \Eeps\phi_\eps\to\phi\quad\text{in }C([0,T];L^2(\Omega)),\quad
 \sup_{t\in[0,T]}| (\widetilde \vu_\eps(t)-\vu(t),\vz)_\Omega |\to0
 \quad(\vz\in \cH).
\end{equation}

A pressure primitive may be chosen so that
\[
 P\in L^\infty(0,T;L^2_0(\Omega)),\qquad
 p=\partial_tP\quad\text{in }\mathcal D'((0,T)\times\Omega).
\]
\end{theorem}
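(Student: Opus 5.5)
The proof is organized in four stages: uniform bounds, phase compactness, the critical corrector analysis, and passage to the limit.

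\textbf{Uniform bounds.} Since \(\lambda_\eps\to\lambda>0\) and the initial energies converge under \eqref{eq:data}, the energy inequality in \cref{def:weak} together with Gronwall's lemma gives uniform bounds:
\[
\widetilde\vu_\eps\in L^\infty_tL^2_x\cap L^2_tH^1_x,\qquad \phi_\eps\in L^\infty_tH^1(\Oe),\qquad \nabla\mu_\eps\in L^2_tL^2(\Oe).
\]
The zero-extension Korn inequality on \(\W\) converts the bound on \(D\vu_\eps\) into a bound on \(\nabla\widetilde\vu_\eps\). To control \(\mu_\eps\) in \(L^2_tH^1\), I test \eqref{eq:weakchemical} with \(\eta=1\). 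This bounds the mean \((\mu_\eps)_{\Oe}\) by \(\|F'(\phi_\eps)\|_{L^1}\), which is uniformly bounded by the \(L^6\) estimate \eqref{eq:scalarSP}. The uniform Poincaré inequality in \eqref{eq:scalarSP} then finishes the bound, and \cref{lem:extension} transfers all scalar bounds to \(\Eeps\phi_\eps\) and \(\Eeps\mu_\eps\).

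\textbf{Time derivatives and phase compactness.} The time derivative \(\partial_t\phi_\eps\) is bounded in \(L^2_t(H^1)'\). This follows from \eqref{eq:weakCH} once \(\phi_\eps\vu_\eps\) is bounded in \(L^2_tL^{3/2}\), using \(\phi_\eps\in L^\infty L^6\) and \(\vu_\eps\in L^2L^6\). I apply Aubin--Lions to the zero-filled phase \(\chi_\eps\phi_\eps\), tested against fixed-domain functions, and transfer the result to \(\Eeps\phi_\eps\) via \eqref{eq:fillingerror}. This gives \(\Eeps\phi_\eps\to\phi\) in \(C([0,T];L^2)\), hence \(\Eeps\phi_\eps\wstar\phi\) in \(L^\infty H^1\), and \(\Eeps\mu_\eps\weak\mu\) in \(L^2H^1\).

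The upgrade to strong convergence in \(L^2_tH^1_x\) uses the chemical-potential identity. I test \eqref{eq:weakchemical} with \(\eta=\phi_\eps-\phi|_{\Oe}\) to obtain
\[
\int_0^T\!\!\int_\Oe \nabla\phi_\eps\cdot\nabla(\phi_\eps-\phi)=\int_0^T\!\!\int_\Oe(\mu_\eps-F'(\phi_\eps))(\phi_\eps-\phi).
\]
The right side vanishes because \(\mu_\eps\) and \(F'(\phi_\eps)\) are bounded in \(L^2\) while \(\phi_\eps-\phi\to0\) strongly in \(L^2\). Hence \(\int_\Oe|\nabla(\phi_\eps-\phi)|^2\to0\). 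The hole contribution is controlled by \eqref{eq:extensionlocal} together with absolute continuity, since \(|\calA_\eps\cup\He|\to0\); on \(\calA_\eps\) I split \(\nabla\phi_\eps=\nabla(\phi_\eps-\phi)+\nabla\phi\). Strong \(H^1\) convergence of \(\nu(\Eeps\phi_\eps)\) then follows from the chain rule \(\nabla\nu(\Phi)=\nu'(\Phi)\nabla\Phi\), continuity of \(\nu'\), boundedness of \(\nu'\), and dominated convergence along a.e.-convergent subsequences.

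\textbf{Velocity compactness and the Brinkman term.} I use the correctors of \cref{sec:correctors}: localized, rescaled exterior Stokes fields \(\vh^i((x-x_{\eps,k})/a_\eps)\), cut off at scale \(\ell_\eps\) and repaired to be solenoidal. These give, for each \(\vpsi\in\Ssig\), test fields \(\vpsi_\eps\in\cV_\eps\) with \(\vpsi_\eps\to\vpsi\) in \(L^2\), weakly in \(H^1\), and uniformly bounded. Velocity compactness follows by an Aubin--Lions or Lions--Magenes argument applied to \((\widetilde\vu_\eps,\vpsi_\eps)\): equicontinuity in time comes from the momentum equation tested with \(\vpsi_\eps\), and the \(H^1\) bound supplies the spatial part. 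This yields strong \(L^2_tL^2_x\) convergence and the weak-in-time convergence in \eqref{eq:strongtimetraces}.

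Passing to the limit in the momentum equation with \(\vpsi_\eps\), the convective, forcing and capillary terms converge directly; for the capillary term \(\phi_\eps\nabla\mu_\eps\), strong convergence of \(\phi_\eps\) against weak convergence of \(\nabla\mu_\eps\) suffices. The critical term is
\[
\int 2\nu(\phi_\eps)D\vu_\eps:D\vpsi_\eps.
\]
Here \cref{prop:weighted} (the weighted-capacity principle with coefficients \(\nu(\Eeps\phi_\eps)\), which converge strongly in \(L^2_tH^1\)) yields the limit
\[
\int 2\nu(\phi)D\vu:D\vpsi+\nu(\phi)\B\vu\cdot\vpsi,
\]
and the same proposition gives the dissipation lower bound
\[
\liminf\int_0^t\!\!\int_\Oe 2\nu(\phi_\eps)|D\vu_\eps|^2\ge\int_0^t\!\!\int_\Omega 2\nu(\phi)|D\vu|^2+\nu(\phi)\B\vu\cdot\vu.
\]
The mobility term \(m(\phi_\eps)\nabla\mu_\eps\) converges because \(m(\phi_\eps)\) converges strongly, bounded and a.e., while \(\nabla\mu_\eps\) converges weakly. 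Weak lower semicontinuity handles \(\calF\) and \(|\nabla\mu|^2\), and together with convergence of the initial energies this gives \eqref{eq:limitenergy}. Testing \eqref{eq:weakCH} with \(\zeta=1\) gives mass conservation, and the extension to all \(\vpsi\in C^1([0,T];\cV)\) follows by density.

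\textbf{Pressure.} Define the functional \(G(t)\) by integrating in time every non-time-derivative term of the limit momentum equation; \(G\) vanishes on \(\cV\), and de Rham's theorem with Nečas' inequality gives \(P\in L^\infty L^2_0\) with \(\nabla P=G\) up to \(\vu(t)-\vu^0\). Then \(p=\partial_tP\).

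\textbf{Main obstacle.} The hardest step is identifying the viscous term inside the concentrated corrector layer, where the gradients \(\nabla\vpsi_\eps\) carry order-one energy concentrated on a set of vanishing measure, so that \(L^2\) convergence of the coefficient is not enough. Here I rely on \cref{prop:weighted}, whose mechanism is a cellwise Hardy estimate: the replacement error is bounded by
\[
\int|\nu_\eps-(\nu_\eps)_{Q_{\eps,k}}|^2|\nabla\vh_\eps|^2\lesssim\|\nabla\nu_\eps\|_{L^2}^2\cdot O(1),
\]
and strong \(H^1\) convergence then lets \(\nu_\eps\) be replaced by \(\nu(\phi)\).
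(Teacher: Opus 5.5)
Your proposal is correct and follows the paper's proof essentially step for step. The shared steps are: energy bounds with the mean of \(\mu_\eps\) fixed by testing with constants; phase compactness from the zero-filled time derivative plus the filling error \eqref{eq:fillingerror}; the test \(\eta=\phi_\eps-\phi|_{\Oe}\) in the chemical-potential identity for strong \(L^2_tH^1_x\) convergence; corrected-mode velocity compactness; \cref{prop:weighted} for both the Brinkman term and the dissipation lower bound; and a Bogovski\u\i{}/de Rham pressure primitive.

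A few details are stated too loosely and need a line each when written out:
\begin{itemize}
\item Since \(\nabla\Ceps\vpsi\) converges only weakly, the convective term passes to the limit directly only because \(\widetilde\vu_\eps\to\vu\) strongly in \(L^2_tL^4_x\). This follows by interpolating the strong \(L^2\) convergence with the \(L^2_tL^6_x\) bound. The paper instead integrates by parts onto \(\vr_\eps=\Ceps\vpsi-\vpsi\).
\item Your Hardy display, applied to \(\nu_\eps\) itself, gives only an \(O(1)\) bound. Smallness comes from applying it to \(h=\nu(\Eeps\phi_\eps)-\nu(\phi)\), with the cell-mean part controlled by \((a_\eps/\ell_\eps^3)\|h\|_2^2\). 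It must then be applied again to \(\nu(\phi)-b_\delta\) for a smooth approximation \(b_\delta\), as in \cref{prop:weighted}.
\item For \(\partial_t\phi_\eps\in L^2_t(H^1)'\), the pairing with \(\nabla\zeta\in L^2\) needs \(\phi_\eps\vu_\eps\in L^2_tL^2_x\), not \(L^{3/2}\). Your inputs already give this.
\end{itemize}
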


The additional term in \eqref{eq:limit} is local in the phase: the scalar factor is $\nu(\phi(t,x))$, while $\B$ is determined by the obstacle geometry. The mobility and the chemical-potential operator are unchanged. The theorem applies on every finite time interval. For spherical holes, the resistance becomes explicit.
\begin{corollary}[Spherical obstacles]\label{cor:spheres}
If $K=\overline{B(0,r)}$, $0<r<1/8$, then $\M=6\pi r\Ithree$. The Brinkman term in \eqref{eq:limit} is $6\pi\beta r\nu(\phi)\vu$. With the alternative microscopic stress $\nu_{\mathrm{ref}}(\phi)D\vu$, the term is $3\pi\beta r\nu_{\mathrm{ref}}(\phi)\vu$.
\end{corollary}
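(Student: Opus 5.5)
The plan is to compute the exterior Stokes fields explicitly for the ball and then read off $\M$ from the energy identity. For $K=\overline{B(0,r)}$ the solution of \eqref{eq:exterior} with boundary value $\ve_i$ is the classical Stokes flow past a sphere. In the frame where the sphere moves with velocity $\ve_i$ and the fluid is at rest at infinity, it is given by the Oseen tensor plus a potential dipole:
\[
 \vh^i(x)=\frac{3r}{4}\Big(\frac{\ve_i}{|x|}+\frac{x_i\,x}{|x|^3}\Big)
 +\frac{r^3}{4}\Big(\frac{\ve_i}{|x|^3}-\frac{3x_i\,x}{|x|^5}\Big),
 \qquad q^i(x)=\frac{3r}{2}\,\frac{x_i}{|x|^3}.
\]
I will verify directly that each piece solves the Stokes system. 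The Stokeslet term pairs with the pressure $q^i$. The dipole term is $-\tfrac{r^3}{4}\nabla(x_i/|x|^3)$, the gradient of a harmonic function, so it is harmonic and divergence free. On $|x|=r$ the two $x_ix/|x|^3$ contributions cancel and the $\ve_i$ coefficients sum to $\tfrac34+\tfrac14=1$. The field decays like $|x|^{-1}$, so $\vh^i\in L^6$ and $\nabla\vh^i\in L^2$. Uniqueness of finite-energy exterior solutions then identifies this pair with the one in \eqref{eq:exterior}.

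Next I compute $\M_{ij}$ without a volume integral. Since $\Div\vh^j=0$, we have $\Div(2D\vh^j)=\Delta\vh^j$, and the equation gives $\Div(2D\vh^j-q^j\Ithree)=0$. I integrate $2D\vh^i:D\vh^j=\nabla\vh^i:(2D\vh^j-q^j\Ithree)$ over $B(0,R)\setminus K$ and apply the divergence theorem. The boundary term at $|x|=R$ is $O(R^{-1}\cdot R^{-2}\cdot R^2)\to0$. On $\partial K$ we have $\vh^i=\ve_i$, so
\[
 \M_{ij}=-\ve_i\cdot\int_{\partial K}\big(2D\vh^j-q^j\Ithree\big)\vn_K\,\mathrm dS ,
\]
with $\vn_K$ the outward normal of $K$. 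This is the $i$-th component of the drag on the translating sphere. Stokes' classical computation gives the traction $\sigma\vn_K=-\tfrac{3}{2r}\ve_j$, which is constant on the sphere. Integrating over the area $4\pi r^2$ yields force $-6\pi r\ve_j$, hence $\M_{ij}=6\pi r\delta_{ij}$.

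The main point needing care is this traction evaluation. The most efficient route is to substitute the explicit formulas at $|x|=r$: the normal–tangential parts combine so that $\sigma\vn_K$ has no $x_jx/|x|^2$ component. An alternative is to use the known far-field identity: the Stokeslet coefficient $\tfrac{3r}{4}=\tfrac{\text{force}}{8\pi}$ directly gives force $6\pi r$.

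Finally, $\B=\beta\M=6\pi\beta r\Ithree$, so the Brinkman term in \eqref{eq:limit} is $6\pi\beta r\,\nu(\phi)\vu$. Since $\nu_{\mathrm{ref}}=2\nu$, this equals $3\pi\beta r\,\nu_{\mathrm{ref}}(\phi)\vu$.
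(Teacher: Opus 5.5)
Your proposal is correct and follows the paper's proof. It uses the same explicit Stokeslet-plus-dipole pair, evaluates the traction $(2D\vh^i-q^i\Ithree)\vomega=-\tfrac{3}{2r}\ve_i$ on $|x|=r$, integrates over the sphere to get $\M=6\pi r\Ithree$, and then uses $\nu_{\mathrm{ref}}=2\nu$. Your rederivation of the energy--traction identity is just \eqref{eq:capacityidentities}, which the paper already proves in \cref{sec:correctors}. There is one harmless sign slip: the dipole term equals $+\tfrac{r^3}{4}\nabla(x_i/|x|^3)$, not $-\tfrac{r^3}{4}\nabla(x_i/|x|^3)$, which does not affect your harmonicity and divergence-free argument.
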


\subsection{A vanishing-capillarity limit}\label{subsec:vanishingstatement}

For the second regime, let $\lambda_\eps\to0$ and normalize the energy by $\lambda_\eps$. The data are prescribed so that the kinetic and phase energies remain bounded at this scale. In particular, the convergence assumptions below imply
\[
 \sup_\eps\left(\|\vv_\eps^0\|_{L^2(\Oe)}^2+\calF_{\Oe}(\phi_\eps^0)\right)<\infty.
\]
Suppose $\lambda_\eps>0$, $\lambda_\eps\to0$, and set
\begin{equation}\label{eq:normalization}
 \vv_\eps=\lambda_\eps^{-1/2}\vu_\eps,\quad
 \vv_\eps^0=\lambda_\eps^{-1/2}\vu_\eps^0,\quad
 \vg_\eps=\lambda_\eps^{-1/2}\vf_\eps.
\end{equation}
For $\vv^0\in \cH$, $\phi^0\in H^1(\Omega)$ and $\vg\in L^2_tL^2_x$, assume
\begin{equation}\label{eq:vanishingdata}
 \widetilde \vv_\eps^0\to \vv^0\text{ in }L^2(\Omega),\quad
 \Eeps\phi_\eps^0\to\phi^0\text{ in }H^1(\Omega),\quad
 \widetilde{\vg}_\eps\to \vg\text{ in }L^2_tL^2_x.
\end{equation}
The energy inequality divided by $\lambda_\eps$ gives uniform bounds for $(\vv_\eps,\phi_\eps,\mu_\eps)$.

The normalized limit retains the time derivative in the momentum equation, while the transport and capillary forces vanish. Write $(\vv,\phi,\mu)$ for the corresponding limit variables and let $\pi$ denote the pressure distribution. The formal limiting system is
\begin{equation}\label{eq:vanishinglimit}
 \left\{\begin{aligned}
 \partial_t\vv-\Div(2\nu(\phi)D\vv)+\nu(\phi)\B \vv+\nabla\pi&=\vg,\\
 \Div \vv&=0,\\
 \partial_t\phi&=\Div(m(\phi)\nabla\mu),\\
 \mu&=-\Delta\phi+F'(\phi).
 \end{aligned}\right.
\end{equation}
The energy solution class associated with this system is specified next.

\begin{definition}[Finite-energy weak solution of the unsteady Stokes--Brinkman/Cahn--Hilliard system]\label{def:vanishingweak}
Let \(\vv^0\in\cH\), \(\phi^0\in H^1(\Omega)\) and \(\vg\in L^2(0,T;L^2(\Omega;\mathbb R^3))\). A triple \((\vv,\phi,\mu)\) is a finite-energy weak solution of the unsteady Stokes--Brinkman/Cahn--Hilliard system if
\[
\begin{gathered}
 \vv\in L^\infty(0,T;\cH)\cap L^2(0,T;\cV)\cap C([0,T];\cH),\\
 \phi\in L^\infty(0,T;H^1(\Omega))\cap C([0,T];L^2(\Omega)),
 \qquad \mu\in L^2(0,T;H^1(\Omega)),\\
 \partial_t\vv\in L^2(0,T;\cV'),\qquad
 \partial_t\phi\in L^2(0,T;(H^1(\Omega))').
\end{gathered}
\]
For any \(\vpsi\in C^1([0,T];\cV)\) with \(\vpsi(T)=0\),
\[
 -\int_{\Dt}\vv\cdot\partial_t\vpsi\,\mathrm dx\,\mathrm dt
 +\int_{\Dt}\big(2\nu(\phi)D\vv:D\vpsi+\nu(\phi)\B\vv\cdot\vpsi\big)\,\mathrm dx\,\mathrm dt
 =\int_{\Dt}\vg\cdot\vpsi\,\mathrm dx\,\mathrm dt
 +\int_\Omega\vv^0\cdot\vpsi(0)\,\mathrm dx.
\]
For any scalar \(\zeta\in C^1([0,T];H^1(\Omega))\) with \(\zeta(T)=0\) and any \(\eta\in L^2(0,T;H^1(\Omega))\),
\[
 -\int_{\Dt}\phi\,\partial_t\zeta\,\mathrm dx\,\mathrm dt
 +\int_{\Dt}m(\phi)\nabla\mu\cdot\nabla\zeta\,\mathrm dx\,\mathrm dt
 =\int_\Omega\phi^0\zeta(0)\,\mathrm dx,
\]
\[
 \int_{\Dt}\mu\eta\,\mathrm dx\,\mathrm dt
 =\int_{\Dt}\big(\nabla\phi\cdot\nabla\eta+F'(\phi)\eta\big)\,\mathrm dx\,\mathrm dt.
\]
The initial traces equal \((\vv^0,\phi^0)\), and for almost every \(t\in(0,T)\),
\[
 \begin{aligned}
 \tfrac12\|\vv(t)\|_2^2+\calF_\Omega(\phi(t))
 +\int_0^t\!\int_\Omega
 \big(2\nu(\phi)|D\vv|^2+\nu(\phi)\B\vv\cdot\vv
       +m(\phi)|\nabla\mu|^2\big)\,\mathrm dx\,\mathrm ds\\
 \leq\tfrac12\|\vv^0\|_2^2+\calF_\Omega(\phi^0)
      +\int_0^t\!\int_\Omega\vg\cdot\vv\,\mathrm dx\,\mathrm ds.
 \end{aligned}
\]
The boundary conditions are the no-slip condition for \(\vv\), the variational homogeneous Neumann conditions for \(\phi\) and \(\mu\) in a bounded domain and all fields are periodic on \(\mathbb T^3\).
\end{definition}

With the rescaled limit system and its energy class specified, we state the second homogenization theorem.
\begin{theorem}[Unsteady Stokes--Brinkman limit]\label{thm:vanishing}
Let the geometry and constitutive functions satisfy \crefrange{subsec:geometry}{subsec:model}. Suppose $\lambda_\eps>0$, $\lambda_\eps\to0$ and \eqref{eq:vanishingdata} holds for the normalization \eqref{eq:normalization}. Then finite-energy weak solutions have a subsequence such that
\[
 \begin{aligned}
 \widetilde\vv_\eps&\wstar\vv
    &&\text{in }L^\infty(0,T;\cH),\\
 \widetilde\vv_\eps&\weak\vv
    &&\text{in }L^2(0,T;\cV),\\
 \widetilde\vv_\eps&\to\vv
    &&\text{in }L^2(0,T;\cH),\\
 \Eeps\phi_\eps&\to\phi
    &&\text{in }L^2(0,T;H^1(\Omega))\cap C([0,T];L^2(\Omega)),\\
 \Eeps\phi_\eps&\wstar\phi
    &&\text{in }L^\infty(0,T;H^1(\Omega)),\\
 \Eeps\mu_\eps&\weak\mu
    &&\text{in }L^2(0,T;H^1(\Omega)).
 \end{aligned}
\]
For any $\vz\in\cH$,
\[
 \sup_{t\in[0,T]}
 |(\widetilde\vv_\eps(t)-\vv(t),\vz)_\Omega|\to0.
\]
There exists a pressure distribution $\pi$ such that $(\vv,\phi,\mu,\pi)$ satisfies \eqref{eq:vanishinglimit} with initial data $(\vv^0,\phi^0)$. Moreover, $(\vv,\phi,\mu)$ is a finite-energy weak solution in the sense of \cref{def:vanishingweak}. In particular,
\[
 \partial_t\vv\in L^2(0,T;\cV'),\qquad \vv\in C([0,T];\cH),
\]
and the energy inequality in \cref{def:vanishingweak} holds for almost every time. In the original variables, $\widetilde \vu_\eps\to0$ strongly in $L^2(0,T;H^1(\Omega))$ and in $L^\infty(0,T;L^2(\Omega))$. The limiting momentum equation remains unsteady under the scaling \eqref{eq:normalization}.
\end{theorem}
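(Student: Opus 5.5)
The proof reuses the machinery developed for \cref{thm:main}, now at the level of the normalized unknowns $(\vv_\eps,\phi_\eps,\mu_\eps)$. Dividing the microscopic energy inequality by $\lambda_\eps$ and using \eqref{eq:normalization} gives
\[
 \tfrac12\|\vv_\eps(t)\|_2^2+\calF_{\Oe}(\phi_\eps(t))+\int_0^t\!\!\int_\Oe\big(2\nu(\phi_\eps)|D\vv_\eps|^2+m(\phi_\eps)|\nabla\mu_\eps|^2\big)
 \le \tfrac12\|\vv_\eps^0\|_2^2+\calF_{\Oe}(\phi_\eps^0)+\int_0^t\!\!\int_\Oe\vg_\eps\cdot\vv_\eps .
\]
Together with \eqref{eq:vanishingdata}, Korn's inequality for zero-extended fields and Gronwall, this yields uniform bounds on $\widetilde\vv_\eps$ in $L^\infty_tL^2_x\cap L^2_t\cV$, on $\Eeps\phi_\eps$ in $L^\infty_tH^1_x$, and on $\nabla\mu_\eps$ in $L^2_tL^2_x$. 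The mean of $\mu_\eps$ is controlled by testing \eqref{eq:weakchemical} with constants and using $F'(\phi_\eps)\in L^\infty_tL^2_x$. The scalar inequalities of \cref{lem:extension} then give $\Eeps\mu_\eps$ bounded in $L^2_tH^1_x$.

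In the Cahn--Hilliard equation \eqref{eq:weakCH}, the transport term is $\phi_\eps\vu_\eps=\lambda_\eps^{1/2}\phi_\eps\vv_\eps$. It is $O(\lambda_\eps^{1/2})$ in $L^2_tL^{3/2}_x$ and therefore vanishes in the limit. The rest of the phase analysis is the argument of \cref{sec:phase} with this term treated as a vanishing perturbation:
\begin{itemize}
\item the time-derivative bound and Aubin--Lions give strong $C([0,T];L^2)$ convergence, transferred to $\Eeps\phi_\eps$ through \eqref{eq:fillingerror};
\item the chemical-potential identity, tested with the phase difference, upgrades this to strong $L^2_tH^1_x$ convergence, and hence $\nu(\Eeps\phi_\eps)\to\nu(\phi)$ in $L^2_tH^1_x$.
\end{itemize}
Passing to the limit in the chemical-potential identity uses $F'(\Eeps\phi_\eps)\to F'(\phi)$ in $L^2_tL^2_x$. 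Passing to the limit in the mobility term uses bounded continuity of $m$ together with weak convergence of $\nabla\Eeps\mu_\eps$, with the hole contribution controlled by \eqref{eq:extensionlocal}.

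For the momentum equation in the variables $\vv_\eps$, the convection term becomes $\lambda_\eps^{1/2}\vv_\eps\otimes\vv_\eps$. The capillary force becomes $\lambda_\eps^{1/2}\phi_\eps\nabla\mu_\eps$, which is bounded in $L^2_tL^{3/2}_x$ and carries the same factor. Both therefore tend to zero. I would test with $\vpsi$ corrected by the localized Stokes correctors of \cref{sec:correctors}, i.e.\ $\vpsi_\eps=\vpsi-\sum_i\vw_\eps^i\psi_i$ plus divergence repairs, which vanish on the holes. The weighted-capacity principle \cref{prop:weighted}, applied with the coefficient sequence $\nu(\Eeps\phi_\eps)$, then identifies the limit of the viscous pairing as $2\nu(\phi)D\vv:D\vpsi+\nu(\phi)\B\vv\cdot\vpsi$.

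Strong $L^2_tL^2_x$ compactness of $\widetilde\vv_\eps$ would follow as in \cref{sec:limit}: equicontinuity of $(\widetilde\vv_\eps,\vpsi)$ from the corrected weak form, then an Aubin--Lions/Lions--Magenes argument in $\cH$. This is actually easier here, since the nonlinear terms carry a factor $\lambda_\eps^{1/2}$. The same equicontinuity gives the uniform weak time traces.

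The improved regularity is read off from the limit equation. Every term is now linear in $\vv$ with bounded coefficients, so the limit weak form gives $\partial_t\vv\in L^2(0,T;\cV')$. The Lions--Magenes lemma then yields $\vv\in C([0,T];\cH)$.

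The energy inequality follows by weak lower semicontinuity, since $\widetilde\vg_\eps\cdot\widetilde\vv_\eps$ converges by strong times weak convergence. The viscous Brinkman lower bound
\[
\liminf\int 2\nu(\phi_\eps)|D\vv_\eps|^2\ge\int\big(2\nu(\phi)|D\vv|^2+\nu(\phi)\B\vv\cdot\vv\big)
\]
is exactly the dissipation statement of \cref{prop:weighted}. I expect this lower bound, applied to the rescaled fields, to be the main technical point, though it is inherited verbatim from the critical-case machinery.

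The pressure $\pi$ is reconstructed from the solenoidal formulation by de Rham's theorem, as in \cref{sec:limit}. Finally, in the original variables $\widetilde\vu_\eps=\lambda_\eps^{1/2}\widetilde\vv_\eps$. The uniform bounds on $\widetilde\vv_\eps$ in $L^\infty_tL^2_x\cap L^2_tH^1_x$ then give $\widetilde\vu_\eps\to0$ strongly in both norms.
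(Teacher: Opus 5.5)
Your proposal is correct and follows essentially the same route as the paper. The paper likewise divides the energy inequality by $\lambda_\eps$ and reruns the phase compactness and chemical-potential upgrade with the advective flux carrying $\sqrt{\lambda_\eps}$. It then uses corrected-mode compactness together with \cref{prop:weighted} for both the Brinkman term and the dissipation lower bound, reads off $\partial_t\vv\in L^2(0,T;\cV')$ from the now-linear limit, and reconstructs the pressure as in \cref{sec:limit}. One small sharpening: the phase flux $\phi_\eps\vv_\eps$ is actually bounded in $L^2_tL^2_x$, not merely $L^2_tL^{3/2}_x$. This keeps the time-derivative bound \eqref{eq:distributionderivative} in $L^2(0,T;(H^1(\Omega))')$ exactly as used in \cref{lem:phaseL2}.
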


\section{Uniform estimates and compactness of the phase}\label{sec:phase}
\subsection{Energy estimates and the mean chemical potential}
Throughout this section, we use the assumptions of \cref{thm:main}, especially the one that $\lambda_\eps$ is bounded above and away from zero. The energy inequality, Young's inequality and Gronwall's lemma yield
\begin{equation}\label{eq:uniformenergy}
 \|\widetilde \vu_\eps\|_{L^\infty_tL^2_x}
 +\|\widetilde \vu_\eps\|_{L^2_tH^1_x}
 +\|\Eeps\phi_\eps\|_{L^\infty_tH^1_x}
 +\|\nabla\mu_\eps\|_{L^2((0,T)\times\Oe)}\leq C_T.
\end{equation}
$F(s)$ controls $|s|^4$ up to an additive constant. The velocity gradient is controlled by the fixed-domain identity
\[
 \int_\Omega|\nabla\widetilde \vu_\eps|^2\dx
       =2\int_\Omega|D\widetilde \vu_\eps|^2\dx.
\]
To verify it, let $\boldsymbol w$ be a solenoidal field with either no-slip trace or periodic boundary conditions. Expanding $2|D\boldsymbol w|^2=|\nabla\boldsymbol w|^2+\partial_jw_i\,\partial_iw_j$ and integrating the cross term by parts gives zero because $\Div\boldsymbol w=0$ and either the trace vanishes or the field is periodic. In the periodic case the kinetic energy also controls the constant mode.

Testing \eqref{eq:weakchemical} by a spatial constant gives, almost everywhere in time,
\begin{equation}\label{eq:meanmu}
 (\mu_\eps)_{\Oe}=(F'(\phi_\eps))_{\Oe}.
\end{equation}
Since $F'(s)=s^3-s$, the uniform $L^6$ estimate implies
\begin{equation}\label{eq:muFbounds}
 \|F'(\phi_\eps)\|_{L^\infty(0,T;L^2(\Oe))}\leq C_T,
 \quad \|\Eeps\mu_\eps\|_{L^2_tH^1_x}\leq C_T.
\end{equation}
The second assertion follows from \eqref{eq:meanmu} and \eqref{eq:scalarSP}. Thus the chemical potential is controlled with its actual mean.

The phase flux and the mobility flux obey
\[
 \|\phi_\eps \vu_\eps\|_{L^2(0,T;L^2(\Oe))}
 +\|m(\phi_\eps)\nabla\mu_\eps\|_{L^2((0,T)\times\Oe)}\leq C_T.
\]
For the first term, use $\phi_\eps\in L^\infty_tL^3_x$ and $\vu_\eps\in L^2_tL^6_x$. On the fixed ambient domain, \eqref{eq:weakCH} consequently gives
\begin{equation}\label{eq:distributionderivative}
 \partial_t(\chi_\eps\phi_\eps)
 =-\Div\big((\Eeps\phi_\eps)\widetilde \vu_\eps\big)
  +\Div\big(\chi_\eps m(\phi_\eps)\nabla\mu_\eps\big),
 \quad
 \|\partial_t(\chi_\eps\phi_\eps)\|_{L^2(0,T;(H^1(\Omega))')}\leq C_T.
\end{equation}
The divergences in this formula act on all $H^1(\Omega)$ test functions. In particular, \eqref{eq:distributionderivative} encodes the no-flux condition on every solid boundary. These estimates provide the fixed-domain time control needed for the first compactness step.

\begin{lemma}[Compactness of the phase]\label{lem:phaseL2}
After subsequence extraction,
\[
 \begin{gathered}
 \Eeps\phi_\eps\to\phi\quad\text{in }C([0,T];L^2(\Omega)),\\
 \Eeps\phi_\eps\wstar\phi\quad\text{in }L^\infty_tH^1_x,\quad
 \Eeps\mu_\eps\weak\mu\quad\text{in }L^2_tH^1_x.
 \end{gathered}
\]
In particular, $\phi(0)=\phi^0$ in $L^2(\Omega)$.
\end{lemma}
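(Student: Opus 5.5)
The plan is to apply an Aubin--Lions--Simon argument to the zero-filled phase $\chi_\eps\phi_\eps$, which has controlled time derivative by \eqref{eq:distributionderivative}, and then move the conclusion to the Sobolev extension using \eqref{eq:fillingerror}. By \eqref{eq:uniformenergy}, $\Eeps\phi_\eps$ is bounded in $L^\infty_tH^1_x$. Banach--Alaoglu then gives a subsequence with $\Eeps\phi_\eps\wstar\phi$ in $L^\infty_tH^1_x$. Likewise \eqref{eq:muFbounds} gives $\Eeps\mu_\eps\weak\mu$ in $L^2_tH^1_x$ after a further extraction. Both weak statements therefore follow directly from the uniform bounds.

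For the strong statement, write $w_\eps=\chi_\eps\phi_\eps$. Then $w_\eps$ is bounded in $L^\infty_tL^2_x$. By \eqref{eq:distributionderivative}, $\partial_tw_\eps$ is bounded in $L^2(0,T;(H^1(\Omega))')$, so $w_\eps$ is equi-Hölder continuous in time with values in $(H^1)'$: $\|w_\eps(t)-w_\eps(s)\|_{(H^1)'}\le C|t-s|^{1/2}$. The difficulty is that $w_\eps$ is not bounded in $H^1$, because zero filling creates jumps. I would avoid this by working with $\Phi_\eps=\Eeps\phi_\eps$ directly. By \eqref{eq:fillingerror}, uniformly in $t$ (using the $C_w$ representative, so the $H^1$ bound holds at every time),
\[
\|\Phi_\eps(t)-w_\eps(t)\|_{L^2(\Omega)}\le C\ell_\eps^2 .
\]
Combined with the Hölder bound, this gives
\[
\|\Phi_\eps(t)-\Phi_\eps(s)\|_{(H^1)'}\le C|t-s|^{1/2}+C\ell_\eps^2 .
\]
This is equicontinuity up to a vanishing error. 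In addition, $\Phi_\eps(t)$ lies in a fixed ball of $H^1(\Omega)$, which is compact in $L^2(\Omega)$ by Rellich on the $C^3$ domain or the torus.

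The Ehrling inequality $\|z\|_2\le\delta\|z\|_{H^1}+C_\delta\|z\|_{(H^1)'}$ upgrades this to
\[
\|\Phi_\eps(t)-\Phi_\eps(s)\|_2\le C\delta+C_\delta\bigl(|t-s|^{1/2}+\ell_\eps^2\bigr).
\]
A variant of Arzelà--Ascoli then yields precompactness in $C([0,T];L^2(\Omega))$. This variant requires pointwise relative compactness together with asymptotic equicontinuity, the error tending to zero as $\eps\to0$. One route is a diagonal argument on a countable dense set of times followed by the uniform estimate. Alternatively, one can compare $\Phi_\eps$ with its piecewise-linear time interpolant. The limit must coincide with $\phi$ because of the weak-star convergence. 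The step I expect to need the most care is exactly this uniform-in-time transfer, which relies on the pointwise-in-time $H^1$ bound from the $C_w$ representative and on the filling error being uniform in $t$.

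Finally, for the initial trace, $\Phi_\eps(0)=\Eeps\phi_\eps^0\to\phi^0$ in $H^1(\Omega)$ by \eqref{eq:data}. Uniform convergence in $C([0,T];L^2)$ gives $\Phi_\eps(0)\to\phi(0)$ in $L^2$, and hence $\phi(0)=\phi^0$.
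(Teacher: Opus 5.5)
Your proposal is correct and follows essentially the paper's route: the $(H^1(\Omega))'$ time-derivative bound \eqref{eq:distributionderivative} on the zero-filled phase is transferred to $\Eeps\phi_\eps$ through the time-uniform filling error \eqref{eq:fillingerror}, and compactness comes from the every-time $H^1$ bound on the extension. The only difference is in how the compact embedding is used. The paper projects onto finitely many fixed $H^1$ eigenmodes, whose coefficients are bounded in $H^1(0,T)$ and converge in $C([0,T])$ after diagonal extraction, and controls the remainder by the uniform tail bound $\delta_N^{\phi}$. You instead use Ehrling's inequality and an asymptotic Arzel\`a--Ascoli argument, and the two are interchangeable here.
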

\begin{proof}
First, $\Eeps\phi_\eps$ has a continuous $L^2(\Omega)$ representative because $\phi_\eps$ is weakly continuous in $H^1(\Oe)$, the operator $\Eeps$ is bounded between the corresponding $H^1$ spaces, and $H^1(\Omega)\hookrightarrow\hookrightarrow L^2(\Omega)$. For each fixed $\eps$, this gives a continuous $L^2(\Omega)$ representative.

Let $\{\xi_j\}_{j\geq1}\subset H^1(\Omega)$ be an orthonormal basis of $L^2(\Omega)$ consisting of Neumann eigenfunctions in the bounded case or real periodic Fourier modes on the torus. Define
\[
 \mathcal P_N^{\phi}:L^2(\Omega)\to\operatorname{span}\{\xi_1,\ldots,\xi_N\},
 \quad \mathcal P_N^{\phi}z=\sum_{j=1}^N(z,\xi_j)_\Omega\xi_j.
\]
Compactness of the embedding $H^1(\Omega)\hookrightarrow\hookrightarrow L^2(\Omega)$ gives numbers $\delta_N^{\phi}\to0$ such that
\[
 \|(\Id-\mathcal P_N^{\phi})z\|_2
       \leq\delta_N^{\phi}\|z\|_{H^1(\Omega)}.
\]
For every fixed mode, the coefficient of $\chi_\eps\phi_\eps$ is bounded in $H^1(0,T)$ by \eqref{eq:distributionderivative} and the energy bound, with a bound independent of $\eps$. After a diagonal extraction, all these coefficients converge in $C([0,T])$. Their differences from the coefficients of $\Eeps\phi_\eps$ are bounded uniformly in time by $C\ell_\eps^2$, by \eqref{eq:fillingerror}. Finally,
\[
 \sup_{t\in[0,T]}\|(\Id-\mathcal P_N^{\phi})\Eeps\phi_\eps(t)\|_2
       \leq C_T\delta_N^{\phi}.
\]
Finite-dimensional compactness and this uniform tail estimate show that the extensions are Cauchy in $C([0,T];L^2(\Omega))$. The weak convergences follow from the uniform bounds and \eqref{eq:data} identifies the initial value. This is a fixed-mode version of the compactness argument in \cite{Simon}, applied to the time derivative of the zero-filled phase.
\end{proof}

\subsection{Strong convergence of the phase gradients}
The critical viscous coefficient requires an $H^1$-level upgrade of the phase compactness. The variational chemical-potential identity provides this stronger convergence.

\begin{proposition}[Strong phase and viscosity compactness]\label{prop:strongphase}
Along the subsequence of \cref{lem:phaseL2},
\begin{equation}\label{eq:strongphase}
 \chi_\eps\nabla\phi_\eps\to\nabla\phi\quad\text{in }L^2(\Dt),
 \quad \Eeps\phi_\eps\to\phi\quad\text{in }L^2_tH^1_x.
\end{equation}
Furthermore,
\begin{equation}\label{eq:strongnonlinear}
 F'(\Eeps\phi_\eps)\to F'(\phi)\quad\text{in }L^2_tL^2_x,
 \quad
 \nu(\Eeps\phi_\eps)\to\nu(\phi)\quad\text{in }L^2_tH^1_x.
\end{equation}
The limiting chemical-potential identity holds for all $\eta\in L^2_tH^1_x$:
\begin{equation}\label{eq:limitchemical}
 \int_{\Dt}\mu\eta\dx\dt
   =\int_{\Dt}\big(\nabla\phi\cdot\nabla\eta+F'(\phi)\eta\big)\dx\dt.
\end{equation}
\end{proposition}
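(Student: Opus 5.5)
The plan is the standard energy-identity argument: test the chemical-potential identity \eqref{eq:weakchemical} with the phase difference, so that $\|\nabla\phi_\eps\|^2$ appears with a right-hand side that converges. Let $\Phi_\eps=\Eeps\phi_\eps$. First, the limit identity \eqref{eq:limitchemical} follows for tests of the form $\eta|_{\Oe}$ with $\eta\in L^2_tH^1_x$. Indeed, $\chi_\eps\mu_\eps\weak\mu$ in $L^2(\Dt)$, since $\Eeps\mu_\eps\weak\mu$ and \eqref{eq:fillingerror} holds. Likewise $\chi_\eps\nabla\phi_\eps=\nabla\Phi_\eps-\1_{\He}\nabla\Phi_\eps$, where $\nabla\Phi_\eps\weak\nabla\phi$ and the hole part tends to zero weakly, as it is bounded in $L^2$ and supported on a set of vanishing measure. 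Finally, $F'(\Phi_\eps)\to F'(\phi)$ in $L^2_tL^2_x$: we have $\Phi_\eps\to\phi$ in $C([0,T];L^2)$ and are bounded in $L^\infty_tL^6_x$, so interpolation gives convergence in $L^p_tL^q_x$ for $q<6$. The cubic difference is then controlled by $|\Phi_\eps-\phi|(1+|\Phi_\eps|^2+|\phi|^2)$, whose $L^2$ norm tends to zero by Hölder with the $L^6$ bounds. The same argument gives $\chi_\eps F'(\phi_\eps)\to F'(\phi)$ in $L^2$, because the hole contribution is $O(|\He|^{1/3})$ in $L^2$.

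Next comes the key step. Take $\eta=\phi_\eps-\phi|_{\Oe}$ in \eqref{eq:weakchemical}, which is admissible since $\phi_\eps\in L^2_tH^1(\Oe)$. This gives
\[
\int_0^T\!\!\int_{\Oe}\nabla\phi_\eps\cdot\nabla(\phi_\eps-\phi)\dx\dt=\int_0^T\!\!\int_{\Oe}\big(\mu_\eps-F'(\phi_\eps)\big)(\phi_\eps-\phi)\dx\dt .
\]
On the right, $\mu_\eps-F'(\phi_\eps)$ is bounded in $L^2(0,T;L^2(\Oe))$ by \eqref{eq:muFbounds}, and $\chi_\eps(\phi_\eps-\phi)\to0$ strongly in $L^2(\Dt)$ by \cref{lem:phaseL2}. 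So the right side tends to zero. On the left, $\int\chi_\eps\nabla\phi_\eps\cdot\nabla\phi\to\|\nabla\phi\|^2_{L^2(\Dt)}$ by the weak convergence above. Therefore $\limsup\|\chi_\eps\nabla\phi_\eps\|_{L^2(\Dt)}^2\le\|\nabla\phi\|^2$. Combined with weak convergence, this gives the norm convergence and hence $\chi_\eps\nabla\phi_\eps\to\nabla\phi$ strongly in $L^2(\Dt)$.

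To upgrade to $\Phi_\eps$, write $\nabla\Phi_\eps=\chi_\eps\nabla\phi_\eps+\1_{\He}\nabla\Phi_\eps$. Apply \eqref{eq:extensionlocal} pointwise in time:
\[
\int_0^T\!\!\int_{\He}|\nabla\Phi_\eps|^2\le C\int_0^T\!\!\int_{\calA_\eps}|\nabla\phi_\eps|^2\le 2C\|\chi_\eps\nabla\phi_\eps-\nabla\phi\|^2_{L^2(\Dt)}+2C\int_0^T\!\!\int_{\calA_\eps}|\nabla\phi|^2 .
\]
Both terms vanish: the first by the strong convergence, the second by absolute continuity since $|\calA_\eps|\to0$. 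Thus $\nabla\Phi_\eps\to\nabla\phi$ in $L^2(\Dt)$, and with the $L^2$ convergence this gives \eqref{eq:strongphase}.

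Finally, for $\nu$, note that $\nu(\Phi_\eps)\to\nu(\phi)$ in $L^2$ because $\nu$ is Lipschitz. For the gradient, write
\[
\nabla\nu(\Phi_\eps)-\nabla\nu(\phi)=\nu'(\Phi_\eps)(\nabla\Phi_\eps-\nabla\phi)+\big(\nu'(\Phi_\eps)-\nu'(\phi)\big)\nabla\phi .
\]
The first term vanishes because $\nu'$ is bounded. For the second, pass to a further subsequence with $\Phi_\eps\to\phi$ almost everywhere; continuity of $\nu'$ and dominated convergence (majorant $4\|\nu'\|_\infty^2|\nabla\phi|^2$) then finish it. Since the limit is unique, the whole subsequence converges. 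The limit identity \eqref{eq:limitchemical} for general $\eta\in L^2_tH^1_x$ is exactly the first step. I expect the main delicacy to be the bookkeeping between zero-filling and extension in the first and last steps; the energy-identity step itself is routine once the strong $L^2$ compactness and the bound \eqref{eq:muFbounds} are in hand.
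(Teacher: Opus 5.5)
\textbf{Verdict: same route as the paper, with one genuine gap.} Your argument follows the paper's proof: you test \eqref{eq:weakchemical} with $\phi_\eps-\phi|_{\Oe}$ to get norm convergence of $\chi_\eps\nabla\phi_\eps$, push it into the holes with \eqref{eq:extensionlocal}, and treat $\nu$ by the chain rule and dominated convergence. However, your proof of $F'(\Eeps\phi_\eps)\to F'(\phi)$ in $L^2_tL^2_x$, given in your first step, does not close, and it is the only place where you prove that part of \eqref{eq:strongnonlinear}.

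\textbf{Why the first-step argument fails.} Interpolating $C([0,T];L^2)$ convergence with the $L^\infty_tL^6_x$ bound gives convergence in $L^q_x$ only for $q<6$. But H\"older against $|\Phi_\eps|^2+|\phi|^2$, which is bounded only in $L^\infty_tL^3_x$, needs $\Phi_\eps-\phi$ in $L^6_x$ exactly. This is not a technicality; the implication itself is false at that stage. Take a concentrating bubble $\psi_\delta(x)=\delta^{-1/2}g((x-x_0)/\delta)$. Then $\psi_\delta\to0$ in $L^2$ and $\|\psi_\delta\|_{H^1}$ stays bounded, yet $\|\psi_\delta^3\|_{L^2}=\|g\|_{L^6}^3$ for every $\delta$. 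The same objection applies to your claim that $\1_{\He}F'(\Phi_\eps)$ is $O(|\He|^{1/3})$ in $L^2$: that would require $F'(\Phi_\eps)$ to be bounded in $L^6$.

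\textbf{How to repair it.} The fix is a reordering, and it is exactly what the paper does.
\begin{itemize}
\item In the first step you only need weak convergence. $F'(\Phi_\eps)$ is bounded in $L^2(\Dt)$ by \eqref{eq:muFbounds} and converges a.e.\ along subsequences, so it converges weakly. Its hole part tends to zero weakly, just as for the gradients. This is enough to pass to the limit against a fixed $\eta$.
\item The energy-identity step uses only the $L^2$ bound on $\mu_\eps-F'(\phi_\eps)$, so it is unaffected.
\item Once you have $\Phi_\eps\to\phi$ in $L^2_tH^1_x$, Sobolev embedding gives convergence in $L^2_tL^6_x$, and then
\[
 \|F'(\Phi_\eps)-F'(\phi)\|_{L^2_tL^2_x}\le C\big(1+\|\Phi_\eps\|_{L^\infty_tL^6_x}^2+\|\phi\|_{L^\infty_tL^6_x}^2\big)\|\Phi_\eps-\phi\|_{L^2_tL^6_x}\to0 .
\]
\end{itemize}
With this change your proof is complete. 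Two of your choices are fine as they stand. Using $\eta|_{\Oe}$ with $\eta\in L^2_tH^1_x$ directly, instead of smooth tests plus density, is valid. Your subsequence-of-subsequence argument for the $\nu'$ term is, if anything, more careful than the paper's.
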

\begin{proof}
Write $\Phi_\eps=\Eeps\phi_\eps$. Weak $H^1$ convergence and the vanishing measure of the holes imply
\[
 \chi_\eps\nabla\phi_\eps\weak\nabla\phi\quad\text{in }L^2(\Dt).
\]
The pairing of \(\1_{\He}\nabla\Phi_\eps\) with a fixed \(L^2\) function tends to zero by absolute continuity of that test function's integral. The bounded sequence \(F'(\Phi_\eps)\) converges almost everywhere, after extraction, to \(F'(\phi)\) and hence converges weakly in \(L^2\). To pass from \(\Omega_\eps\) to the fixed domain, take first a smooth ambient test \(\eta\). The filling estimate \eqref{eq:fillingerror} applied to \(\mu_\eps\) gives
\[
 \|\Eeps\mu_\eps-\chi_\eps\mu_\eps\|_{L^2(\Dt)}
 \le C\ell_\eps^2\|\mu_\eps\|_{L^2(0,T;H^1(\Omega_\eps))}\to0.
\]
Moreover,
\[
 \left|\int_0^T\!\!\int_{\He}F'(\Phi_\eps)\eta\,\dx\dt\right|
 \le \|\eta\|_\infty |\He|^{1/2}
      \|F'(\Phi_\eps)\|_{L^1_tL^2_x}\to0.
\]
Together with the zero-filled gradient convergence, these estimates permit passage to the limit in \eqref{eq:weakchemical} for smooth tests. Continuity of both sides on \(L^2(0,T;H^1(\Omega))\) then gives \eqref{eq:limitchemical} by density.

The function $\eta_\eps=\phi_\eps-\phi|_{\Oe}$ belongs to $L^2(0,T;H^1(\Oe))$ and is therefore admissible in \eqref{eq:weakchemical}. Moreover, $\mu_\eps$ and $F'(\phi_\eps)$ are bounded in $L^2(0,T;L^2(\Oe))$ by \eqref{eq:muFbounds}, so every term in this test is at the energy level. Consequently,
\[
 \int_0^T\!\int_\Oe|\nabla\phi_\eps|^2\dx\dt
 =\int_0^T\!\int_\Oe\nabla\phi_\eps\cdot\nabla\phi\dx\dt
  +\int_0^T\!\int_\Oe
      (\mu_\eps-F'(\phi_\eps))(\phi_\eps-\phi)\dx\dt.
\]
The last term tends to zero by \eqref{eq:muFbounds} and strong $L^2$ convergence of the phase. The first term converges to $\|\nabla\phi\|_{L^2(\Dt)}^2$. Weak convergence and convergence of the norms give the first assertion of \eqref{eq:strongphase}.

The strong convergence on the fluid region must also be transferred to the extension inside the holes. By locality,
\[
 \int_0^T\!\int_{\He}|\nabla\Phi_\eps|^2\dx\dt
 \leq C\int_0^T\!\int_{\calA_\eps}|\nabla\phi_\eps|^2\dx\dt\to0.
\]
Since $\calA_\eps\subset\Oe$,
\[
 \begin{aligned}
 \int_0^T\!\int_{\calA_\eps}|\nabla\phi_\eps|^2\dx\dt
 &\leq 2\int_0^T\!\int_{\calA_\eps}
      |\chi_\eps\nabla\phi_\eps-\nabla\phi|^2\dx\dt
      +2\int_0^T\!\int_{\calA_\eps}|\nabla\phi|^2\dx\dt\\
 &\leq 2\|\chi_\eps\nabla\phi_\eps-\nabla\phi\|_{L^2(\Dt)}^2
      +2\int_0^T\!\int_{\calA_\eps}|\nabla\phi|^2\dx\dt
 \to0.
 \end{aligned}
\]
Here $\calA_\eps\subset\Oe$ and $|\calA_\eps|\to0$. The first term tends to zero by the strong convergence of the zero-filled gradients already established, whereas the second tends to zero by absolute continuity of the $L^1$ integral of $|\nabla\phi|^2$. The local extension estimate \eqref{eq:extensionlocal} then forces the gradient energy of the extension inside the holes to vanish. Together with \cref{lem:phaseL2}, this proves the full strong $L^2_tH^1_x$ convergence.

For the polynomial term, Sobolev embedding and the uniform $L^\infty_tL^6_x$ bound give
\[
 \|F'(\Phi_\eps)-F'(\phi)\|_{L^2_tL^2_x}
 \leq C\big(1+\|\Phi_\eps\|_{L^\infty_tL^6_x}^2
              +\|\phi\|_{L^\infty_tL^6_x}^2\big)
        \|\Phi_\eps-\phi\|_{L^2_tL^6_x}\to0.
\]
Finally, put $b_\eps=\nu(\Phi_\eps)$ and $b=\nu(\phi)$. The Lipschitz bound gives strong $L^2$ convergence. By the Sobolev chain rule,
\[
 \nabla b_\eps-\nabla b
 =\nu'(\Phi_\eps)(\nabla\Phi_\eps-\nabla\phi)
  +(\nu'(\Phi_\eps)-\nu'(\phi))\nabla\phi.
\]
The first term tends to zero in $L^2$. By \cref{lem:phaseL2}, after passage to the present subsequence $\Phi_\eps\to\phi$ almost everywhere in $\mathcal D_T$. Continuity and boundedness of $\nu'$ therefore give $\nu'(\Phi_\eps)\to\nu'(\phi)$ almost everywhere and boundedly. Dominated convergence applied to $|\nabla\phi|^2$ shows that the second term also tends to zero in $L^2$. This proves \eqref{eq:strongnonlinear}.
\end{proof}

\Cref{prop:strongphase} gives $\nu(\Eeps\phi_\eps)\to\nu(\phi)$ strongly in $L^2_tH^1_x$. The next section combines this convergence with the critical corrector estimates to identify the viscous capacity term.

\section{Critical solenoidal correctors and weighted capacity}\label{sec:correctors}
\subsection{A local divergence inverse}\label{subsec:localinverse}
The trace corrections below require a divergence right inverse whose zero extension retains high Sobolev regularity. Since the reference shell is a fixed smooth domain, we use the classical compact-support Bogovski\u\i{} construction on $S$.

For an open set $G$, set
\[
 C^\infty_{c,0}(G)=\left\{g\in C_c^\infty(G):\int_G g\,\mathrm dx=0\right\},
 \qquad S=B(0,1)\setminus\overline{B(0,1/2)}.
\]

\begin{lemma}[A divergence inverse on a fixed shell]\label{lem:localinverse}
There exists a linear operator
\[
 \mathfrak B_S:C^\infty_{c,0}(S)\to C_c^\infty(S;\mathbb R^3),
\]
such that the zero extension of $\mathfrak B_Sg$ to $\mathbb R^3$ is smooth and supported in $\overline S$ and
\begin{equation}\label{eq:referenceBogovskii}
 \Div(\mathfrak B_Sg)=g,
 \qquad
 \|\mathfrak B_Sg\|_{W^{s+1,p}(S)}
 \le C_{s,p}\|g\|_{W^{s,p}(S)},
 \quad s=0,1,2,\quad 1<p<\infty.
\end{equation}
For $x_0\in\mathbb R^3$ and $\rho>0$, let $A=x_0+\rho S$. If $g\in C^\infty_{c,0}(A)$, define
\begin{equation}\label{eq:scaledBogovskii}
 (\mathfrak B_Ag)(x_0+\rho y)
 =\rho\,\mathfrak B_S[g(x_0+\rho\,\cdot)](y),\qquad y\in S.
\end{equation}
Then the zero extension of $\mathfrak B_Ag$ is smooth and supported in $\overline A$, $\Div\mathfrak B_Ag=g$ and
\begin{equation}\label{eq:scaledBogovskiibound}
 \sum_{j=0}^{s+1}\rho^{j-1}
 \|\nabla^j\mathfrak B_Ag\|_{L^p(A)}
 \le C_{s,p}\sum_{j=0}^{s}\rho^j\|\nabla^jg\|_{L^p(A)}.
\end{equation}
The constants depend only on the fixed shell, $s$ and $p$.
\end{lemma}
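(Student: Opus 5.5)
The plan is to take the classical Bogovski\u\i{} construction on the fixed shell $S$ and then obtain the scaled statement purely by change of variables. The shell $S=B(0,1)\setminus\overline{B(0,1/2)}$ is a bounded Lipschitz (indeed smooth) domain. It is a finite union of open sets $G_1,\dots,G_M$, each star-shaped with respect to a ball $B_m$ with $\overline{B_m}\subset G_m\subset S$; for instance, take small balls centred on the sphere of radius $3/4$, thickened to the full radial width. On each $G_m$ we use the Bogovski\u\i{} integral operator
\[
 \mathfrak B_m g(x)=\int_{G_m} g(y)\,\frac{x-y}{|x-y|^3}\int_{|x-y|}^\infty \omega_m\Big(y+r\frac{x-y}{|x-y|}\Big)r^2\,\mathrm dr\,\mathrm dy,
\]
with $\omega_m\in C_c^\infty(B_m)$ and $\int\omega_m=1$. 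The classical results are:
\begin{itemize}
\item for $g\in C^\infty_{c,0}(G_m)$, the field $\mathfrak B_mg$ lies in $C_c^\infty(G_m)$ and satisfies $\Div\mathfrak B_mg=g$;
\item the bound $\|\nabla^{s+1}\mathfrak B_mg\|_{L^p}\le C\|\nabla^s g\|_{L^p}$ holds for all $s\ge0$ and $1<p<\infty$, by Calder\'on--Zygmund theory after differentiating under the integral (Galdi's monograph, Section III.3).
\end{itemize}

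The smoothness and compact support are what make the zero extension smooth and supported in $\overline S$. The lower-order norms are then controlled by Poincar\'e's inequality, since the support stays in the bounded set $G_m$.

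To globalise, we use the standard decomposition of a mean-zero function on a connected union. Choose a partition of unity $\{\theta_m\}$ subordinate to $\{G_m\}$ and order the sets so that each $G_m$ meets $G_1\cup\dots\cup G_{m-1}$. We then write $g=\sum_m g_m$ with $g_m\in C^\infty_{c,0}(G_m)$ by the usual telescoping correction. Set $g_m=\theta_m g-c_m\varpi_m+\dots$, where the $\varpi_m$ are fixed bumps supported in the overlaps and the $c_m$ are linear functionals of $g$ bounded by $\|g\|_{L^1}$. The resulting map $g\mapsto g_m$ is linear and bounded in $W^{s,p}$ for $s=0,1,2$. The one point needing care is that $g$ itself is compactly supported in $S$, so $\theta_m g$ is compactly supported in $G_m$. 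We then define $\mathfrak B_Sg=\sum_m\mathfrak B_mg_m$. This gives linearity, the identity $\Div\mathfrak B_Sg=g$, compact support in $S$, and estimate \eqref{eq:referenceBogovskii}. This decomposition step is the only genuinely delicate point, and it is entirely classical; alternatively one may cite Galdi's Theorem III.3.3 together with its higher-order remark directly.

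For the scaled operator, define $\tilde g(y)=g(x_0+\rho y)$. This lies in $C^\infty_{c,0}(S)$ because the mean condition is scale invariant. Writing $x=x_0+\rho y$, we have
\[
 \Div_x\big(\rho\,\mathfrak B_S\tilde g(y)\big)=\Div_y\mathfrak B_S\tilde g(y)=\tilde g(y)=g(x),
\]
and smoothness and support transfer directly. For the bounds, $\nabla_x^j\mathfrak B_Ag=\rho^{1-j}(\nabla_y^j\mathfrak B_S\tilde g)(y)$ and $\mathrm dx=\rho^3\,\mathrm dy$. It follows that
\[
 \rho^{j-1}\|\nabla^j\mathfrak B_Ag\|_{L^p(A)}=\rho^{3/p}\|\nabla^j_y\mathfrak B_S\tilde g\|_{L^p(S)},
 \qquad
 \rho^{j}\|\nabla^jg\|_{L^p(A)}=\rho^{3/p}\|\nabla_y^j\tilde g\|_{L^p(S)}.
\]
Summing over $j$ and applying \eqref{eq:referenceBogovskii} to $\tilde g$ gives \eqref{eq:scaledBogovskiibound}, with a constant depending only on $S$, $s$ and $p$.
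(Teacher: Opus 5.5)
Your proposal is correct and follows the paper's route. The paper cites the compact-support Bogovski\u\i{} theorem with higher-order estimates on the fixed smooth shell (Galdi, Theorem III.3.3 and its remark), whose star-shaped decomposition and mean-correction step you sketch explicitly, and then obtains \eqref{eq:scaledBogovskiibound} by the same change of variables $x=x_0+\rho y$, in which the factors $\rho^{3/p}$ cancel on both sides.
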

\begin{proof}
The shell $S$ is a fixed bounded smooth domain. The compact-support Bogovski\u\i{} construction on such a domain gives a single linear operator \(\mathfrak B_S\) with the following property: if \(g\in C^\infty_{c,0}(S)\), then \(\mathfrak B_Sg\in C_c^\infty(S;\mathbb R^3)\), \(\Div(\mathfrak B_Sg)=g\) and for any integer \(m\ge0\) and \(1<p<\infty\),
\[
 \|\mathfrak B_Sg\|_{W^{m+1,p}(S)}
 \le C_{m,p,S}\|g\|_{W^{m,p}(S)}.
\]
A construction with these support and higher-order estimates is given in \cite[Chapter~III, Theorem~3.3 and Remark~3.12]{Galdi}, see also the compact-support mapping in \cite[Corollary~3.3]{CM} and the order-one Sobolev gain in \cite[Corollary~3.4 and Remark~3.5]{CM}. Taking \(m=s=0,1,2\) yields \eqref{eq:referenceBogovskii}. Because the output is compactly supported in \(S\), its zero extension is smooth on \(\mathbb R^3\) and supported in \(\overline S\).

For the rescaled shell, a change of variables gives
\[
 \|\nabla^j\mathfrak B_Ag\|_{L^p(A)}
 =\rho^{1-j+3/p}
 \|\nabla^j\mathfrak B_S[g(x_0+\rho\,\cdot)]\|_{L^p(S)}.
\]
Moreover,
\[
 \|\nabla^m[g(x_0+\rho\,\cdot)]\|_{L^p(S)}
 =\rho^{m-3/p}\|\nabla^m g\|_{L^p(A)}.
\]
Multiplying by $\rho^{j-1}$, summing in $j$, and using \eqref{eq:referenceBogovskii} gives \eqref{eq:scaledBogovskiibound}. The divergence identity follows directly from \eqref{eq:scaledBogovskii}.
\end{proof}

With the divergence repair fixed on the reference shell, the exterior Stokes fields can now be localized without changing incompressibility.

The exterior Stokes problem \eqref{eq:exterior} has a unique finite-energy velocity in the class $\vh^i\in L^6$, $\nabla \vh^i\in L^2$, see the classical exterior-domain theory in \cite{Galdi} and its use in \cite{AllaireI,HLO}. Smoothness of the reference obstacle and decay at infinity imply
\[
 |\vh^i(y)|\leq\frac C{1+|y|},\quad
 |\nabla \vh^i(y)|+|q^i(y)|\leq\frac C{(1+|y|)^2},\quad
 |\nabla^2\vh^i(y)|+|\nabla q^i(y)|\leq\frac C{(1+|y|)^3}.
\]
The bounds near $\partial K$ are interpreted on the exterior. Extend $\vh^i$ by $\ve_i$ inside $K$. Its distributional divergence is zero, and its gradient vanishes in $K$. Let $\vn_E$ be the outward normal of the exterior fluid, pointing into $K$. To identify the total traction, set \(\sigma^i=2D\vh^i-q^i\Ithree\). Testing \(-\Div\sigma^i=0\) by \(\vh^j\) on \(B_R\setminus K\), using \(\vh^j=\ve_j\) on \(\partial K\), and passing \(R\to\infty\) gives
\[
 \ve_j\cdot\int_{\partial K}\sigma^i\vn_E\ds
 =\int_{\R^3\setminus K}2D\vh^i:D\vh^j\dx=\M_{ij}.
\]
Since \(\M\) is symmetric by \eqref{eq:capacity}, this is equivalent to
\begin{equation}\label{eq:capacityidentities}
 \int_{\partial K}(2D\vh^i-q^i\Ithree)\vn_E\ds=\M\ve_i.
\end{equation}
If \(\vxi\cdot\M\vxi=0\), the exterior field \(\sum_i\vxi_i\vh^i\) has zero symmetric gradient and is therefore a rigid motion. Decay at infinity makes it vanish and the boundary trace then gives \(\vxi=0\). Thus \(\M\) is positive definite.

For the local construction, abbreviate $a=a_\eps$ and $\ell=\ell_\eps$, and put $R=\ell/4$. We use the operator of \cref{lem:localinverse}, extending its output by zero outside the shell.

Choose a radial $\vartheta\in C_c^\infty(B(0,7/8))$ that equals $1$ on $B(0,5/8)$ and set
\[
 \vartheta_k(x)=\vartheta((x-x_{\eps,k})/R),\quad
 S_{\eps,k}=x_{\eps,k}+RS.
\]
The scaled exterior fields are
\[
 \vh_{a,k}^i(x)=\vh^i((x-x_{\eps,k})/a),\quad
 q_{a,k}^i(x)=a^{-1}q^i((x-x_{\eps,k})/a).
\]
Since the constant extension of $\vh_{a,k}^i$ is solenoidal, the smooth datum $\nabla\vartheta_k\cdot \vh_{a,k}^i$ has zero integral and is compactly supported in $S_{\eps,k}$. Define
\[
 \vz_{\eps,k}^i=\mathfrak B_{S_{\eps,k}}
        (\nabla\vartheta_k\cdot \vh_{a,k}^i).
\]
By \eqref{eq:scaledBogovskiibound} with $s=2$ and interior derivative estimates for the exterior Stokes field on the shell,
\[
 \Div \vz_{\eps,k}^i=\nabla\vartheta_k\cdot \vh_{a,k}^i,
 \quad |\nabla^j\vz_{\eps,k}^i|\leq CaR^{-1-j},\quad j=0,1,2.
\]
The normalized datum $R(\nabla\vartheta_k\cdot \vh_{a,k}^i)(x_{\eps,k}+R\,\cdot)$ has $W^{2,p}(S)$ norm at most $Ca/R$. Taking $p>3$ and using $W^{3,p}(S)\hookrightarrow W^{2,\infty}(S)$ proves the pointwise bounds. Put
\begin{equation}\label{eq:localizedH}
 \vH_{\eps,k}^i=\vartheta_k\vh_{a,k}^i-\vz_{\eps,k}^i,
 \quad \varpi_{\eps,k}^i=\vartheta_kq_{a,k}^i.
\end{equation}
Extend $\vH_{\eps,k}^i$ by $\ve_i$ in $K_{\eps,k}$ and by zero outside $B(x_{\eps,k},R)$. It is globally solenoidal and belongs to $H^1(\Omega)$.

\begin{lemma}[Localized capacity fields]\label{lem:localized}
The fields in \eqref{eq:localizedH} satisfy
\begin{align}
 |\vH_{\eps,k}^i(x)|&\leq\frac{Ca}{a+|x-x_{\eps,k}|},&
 |\nabla \vH_{\eps,k}^i(x)|&\leq\frac{Ca}{(a+|x-x_{\eps,k}|)^2},\notag\\
 \int_\Oe2D\vH_{\eps,k}^i:D\vH_{\eps,k}^j\dx
 &=a\M_{ij}+O(a^2/R),&
 \|\vH_{\eps,k}^i\|_{L^2(\Omega)}^2&\leq Ca^2R.\label{eq:Henergy}
\end{align}
The pointwise estimates hold for almost every $x\in\Omega$ after the constant extension through $K_{\eps,k}$ and the zero extension outside $B(x_{\eps,k},R)$.
The distribution
\[
 \vG_{\eps,k}^i=-\Delta \vH_{\eps,k}^i+\nabla \varpi_{\eps,k}^i
\]
is a smooth shell-supported function as a distribution on the open fluid domain $\Oe$, and
\begin{equation}\label{eq:shellforcebounds}
 \|\vG_{\eps,k}^i\|_{L^\infty(\Oe)}\leq CaR^{-3},\quad
 \int_\Oe \vG_{\eps,k}^i\dx=-a\M \ve_i.
\end{equation}
For every $\vw\in \cV_\eps$,
\begin{equation}\label{eq:shellpairing}
 \int_\Oe2D\vw:D\vH_{\eps,k}^i\dx
       =\int_\Oe \vG_{\eps,k}^i\cdot \vw\dx.
\end{equation}
\end{lemma}
\begin{proof}
The exterior decay and the pointwise bounds for the shell repair give the first two estimates in \eqref{eq:Henergy}, interpreted almost everywhere for the constant extension through the obstacle. For $|x-x_{\eps,k}|\lesssim a$, they reduce to the fixed-scale bounds $O(1)$ and $O(a^{-1})$. For $|x-x_{\eps,k}|\gtrsim a$, they are exactly the exterior decay rates $O(a/r)$ and $O(a/r^2)$. We record the energy comparison because it fixes the critical normalization. On the annulus \(|x-x_{\eps,k}|\ge R/2\),
\[
 |\nabla\vh_{a,k}^i(x)|\le C\frac{a}{|x-x_{\eps,k}|^2},
\]
hence
\[
 \int_{\{|x-x_{\eps,k}|>R/2\}}|\nabla\vh_{a,k}^i|^2\dx
 \le C a^2\int_{R/2}^{\infty}r^{-2}\,\mathrm dr
 \le C\frac{a^2}{R}.
\]
On the cutoff shell,
\[
 |\nabla\vartheta_k|\,|\vh_{a,k}^i|
 \le C R^{-1}\frac aR,
 \qquad
 |\nabla\vz_{\eps,k}^i|\le C\frac a{R^2},
\]
and therefore
\[
 \int_{S_{\eps,k}}
 \big(|\nabla\vartheta_k|^2|\vh_{a,k}^i|^2
      +|\nabla\vz_{\eps,k}^i|^2\big)\dx
 \le C\frac{a^2}{R}.
\]
The same estimates hold for the corresponding cross terms by Cauchy--Schwarz. With the change of variables $y=(x-x_{\eps,k})/a$, one has $D_x\vh_{a,k}^i=a^{-1}D_y\vh^i$ and $\mathrm dx=a^3\,\mathrm dy$. Hence the full rescaled exterior energy is
\[
 \int_{\R^3\setminus K_{\eps,k}}
 2D\vh_{a,k}^i:D\vh_{a,k}^j\dx=a\M_{ij}.
\]
Removing the exterior tail and inserting the cutoff and divergence repair changes the energy by at most \(C a^2/R\). This proves the energy identity in \eqref{eq:Henergy}. The \(L^2\) estimate follows from
\[
 \int_0^R\left(\frac{a}{a+r}\right)^2r^2\,\mathrm dr
 \le a^2R,
\]
together with the $O(a^3)$ contribution of the constant extension inside the obstacle. Since $a/R\to0$, this contribution is bounded by $Ca^2R$ for sufficiently small $\eps$.

The localized force is supported in the cutoff shell. Differentiating
\(\vartheta_k\vh_{a,k}^i-\vz_{\eps,k}^i\) and
\(\vartheta_k q_{a,k}^i\), using
\[
 |\nabla^j\vartheta_k|\le C R^{-j},\qquad
 |\vh_{a,k}^i|\le C\frac aR,\qquad
 |\nabla\vh_{a,k}^i|+|q_{a,k}^i|\le C\frac a{R^2},
\]
and
\(|\nabla^2\vz_{\eps,k}^i|\le CaR^{-3}\),
gives
\(\|\vG_{\eps,k}^i\|_\infty\le CaR^{-3}\).
Because the localized fields agree with the rescaled exterior Stokes solution near \(\partial K_{\eps,k}\), while $\vartheta_k$, $\varpi_{\eps,k}^i$, and the compactly supported repair $\vz_{\eps,k}^i$ all vanish in a neighborhood of $\partial B(x_{\eps,k},R)$, both the localized velocity and its stress vanish there. Therefore
\[
 \int_\Oe\vG_{\eps,k}^i\dx
 =-\int_{\partial K_{\eps,k}}
   (2D\vh_{a,k}^i-q_{a,k}^i\Ithree)\vn_E\,\ds.
\]
The change of variables \(x=x_{\eps,k}+ay\) contributes one power of \(a\) to the traction integral and \eqref{eq:capacityidentities} therefore yields
\[
 \int_\Oe\vG_{\eps,k}^i\dx=-a\M\ve_i.
\]
Finally, integration by parts against \(\vw\in\cV_\eps\) gives
\[
 \int_\Oe2D\vw:D\vH_{\eps,k}^i\dx
 =\int_\Oe\vG_{\eps,k}^i\cdot\vw\dx,
\]
because \(\Div\vw=0\) and \(\vw\) has zero trace on every solid boundary. Density extends the identity to all of \(\cV_\eps\).
\end{proof}

To use the local defects as oscillating tests, we next match the full trace of a smooth solenoidal field on each obstacle.

Let $\vpsi$ be smooth and solenoidal, compactly supported if $\Omega\subset \mathbb{R}^3$, and periodic if $\Omega=\T$.
After correcting the cell-center values, a second local field removes the smooth variation of the trace across each hole.

For each $k$, set $d_k[\vpsi](x)=\vpsi(x)-\vpsi(x_{\eps,k})$. Choose $\eta\in C_c^\infty(B(0,7/16))$ equal to $1$ on $B(0,5/16)$, and put $\eta_k(x)=\eta((x-x_{\eps,k})/a)$. The shell $S_{\eps,k}^{a}=x_{\eps,k}+(a/2)S$ lies outside $K_{\eps,k}$. Using the operator in \eqref{eq:scaledBogovskii}, set
\[
 \vZ_{\eps,k}[\vpsi]=\eta_k d_k[\vpsi]
       -\mathfrak B_{S_{\eps,k}^{a}}(\nabla\eta_k\cdot d_k[\vpsi]).
\]
The datum has zero integral because $\Div d_k[\vpsi]=0$ and $\eta_kd_k[\vpsi]$ is compactly supported in the full ball. Thus $\vZ_{\eps,k}[\vpsi]$ is solenoidal, equals $\vpsi-\vpsi(x_{\eps,k})$ near $K_{\eps,k}$, and vanishes outside $B(x_{\eps,k},a/2)$. Scaling and smoothness of $\vpsi$ give
\begin{equation}\label{eq:Zbounds}
 \|\vZ_{\eps,k}[\vpsi]\|_\infty\leq C_{\vpsi} a,
 \quad \|\nabla \vZ_{\eps,k}[\vpsi]\|_\infty\leq C_{\vpsi},
 \quad
 \Big\|\sum_k \vZ_{\eps,k}[\vpsi]\Big\|_{H^1(\Omega)}\leq C_{\vpsi}\ell^3.
\end{equation}
The construction is linear in $\vpsi$. On $B(x_{\eps,k},a/2)$, Taylor's formula gives $|d_k[\vpsi]|\leq C_{\vpsi}a$ and $|\nabla d_k[\vpsi]|\leq C_{\vpsi}$. The scaled bound \eqref{eq:scaledBogovskiibound} gives the same orders for the divergence repair. Since the supports are disjoint and $\#\Ie\,a^3\leq C\ell^6$,
\[
 \Big\|\sum_k\vZ_{\eps,k}[\vpsi]\Big\|_{L^2(\Omega)}\leq C_{\vpsi}a\ell^3,
 \quad
 \Big\|\sum_k\nabla\vZ_{\eps,k}[\vpsi]\Big\|_{L^2(\Omega)}\leq C_{\vpsi}\ell^3,
\]
which yields the last estimate in \eqref{eq:Zbounds}.

With the local fields now fixed, define the linear correction $\Ceps:\Ssig\to \cV$ by
\begin{equation}\label{eq:correctordef}
 \Ceps\vpsi
 =\vpsi-\sum_{k\in\Ie}\sum_{i=1}^3
       \vpsi_i(x_{\eps,k})\vH_{\eps,k}^i
       -\sum_{k\in\Ie}\vZ_{\eps,k}[\vpsi].
\end{equation}
This formula defines the ambient, zero-extended field. Its restriction to $\Oe$ is the microscopic test. It vanishes on every hole and is solenoidal. All corrections lie strictly inside their cells, so the outer trace of $\vpsi$ is unchanged. The cutoffs and right inverses are independent of time.

\begin{lemma}[Corrected test bounds]\label{lem:corrector}
The restriction of $\Ceps\vpsi$ belongs to $\cV_\eps$ and
\[
 \|\Ceps\vpsi\|_\infty+\|\Ceps\vpsi\|_{H^1(\Omega)}\leq C_{\vpsi},
 \quad \|\Ceps\vpsi-\vpsi\|_2\leq C_{\vpsi}\ell_\eps^2
\]
hold. Moreover, $\Ceps\vpsi$ converges weakly to $\vpsi$ in $H^1(\Omega)$ and strongly in $L^q(\Omega)$ for any $1\leq q<\infty$. For smooth time-dependent tests these bounds hold uniformly in time, and
\begin{equation}\label{eq:correctortime}
 \partial_t\Ceps\vpsi=\Ceps(\partial_t\vpsi).
\end{equation}
Moreover, for almost every $x\in\Oe$,
\begin{equation}\label{eq:correctorpointwise}
 |D\Ceps\vpsi(x)|\leq C_{\vpsi}\left(1+
 \sum_{k\in\Ie}\frac{a_\eps\1_{B(x_{\eps,k},\ell_\eps/4)}(x)}
 {(a_\eps+|x-x_{\eps,k}|)^2}\right).
\end{equation}
\end{lemma}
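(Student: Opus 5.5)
The plan is to read everything off the decomposition \eqref{eq:correctordef}, exploiting that all corrections have disjoint supports, each contained in $B(x_{\eps,k},R)$ with $R=\ell_\eps/4$. \emph{Membership in $\cV_\eps$.} By construction $\vH^i_{\eps,k}$ equals $\ve_i$ on $K_{\eps,k}$ and $\vZ_{\eps,k}[\vpsi]=\vpsi-\vpsi(x_{\eps,k})$ near $K_{\eps,k}$. Hence on $K_{\eps,k}$ we get
\[
\Ceps\vpsi=\vpsi-\sum_i\vpsi_i(x_{\eps,k})\ve_i-(\vpsi-\vpsi(x_{\eps,k}))=0 .
\]
Solenoidality follows from the solenoidality of each summand. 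The outer trace and compact support (or periodicity) are unchanged because every correction lies strictly inside its cell. The restriction therefore lies in $\W_\eps$ and is divergence free.

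\emph{Bounds.} By disjointness, $\|\Ceps\vpsi\|_\infty\le\|\vpsi\|_\infty+C_{\vpsi}\sup_k\|\vH^i_{\eps,k}\|_\infty+C_{\vpsi}a$, which is bounded by \cref{lem:localized} and \eqref{eq:Zbounds}. For the $H^1$ norm, \eqref{eq:Henergy} gives each $\|\nabla\vH^i_{\eps,k}\|_2^2\le Ca$ and $\|\vH^i_{\eps,k}\|_2^2\le Ca^2R$. Summing over $\#\Ie\le C\ell^{-3}$ cells yields a gradient energy of at most $Ca\ell^{-3}\le C\beta$ and
\[
\Big\|\sum_{k,i}\vpsi_i(x_{\eps,k})\vH^i_{\eps,k}\Big\|_2^2\le C_{\vpsi}\ell^{-3}a^2\ell=C_{\vpsi}\ell^4 .
\]
Combined with \eqref{eq:Zbounds}, this gives the uniform $H^1$ bound and $\|\Ceps\vpsi-\vpsi\|_2\le C_{\vpsi}\ell_\eps^2$. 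A bounded $H^1$ sequence converging in $L^2$ to $\vpsi$ converges weakly in $H^1$ to $\vpsi$. Strong $L^q$ convergence for $q<\infty$ follows by interpolating the $L^2$ rate with the uniform $L^\infty$ bound.

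\emph{Time dependence.} The cutoffs, the Bogovski\u\i{} operators and the fields $\vH^i_{\eps,k}$ are all independent of $t$, and $\Ceps$ is linear in $\vpsi$. The maps $\vpsi\mapsto\vpsi(x_{\eps,k})$ and $\vpsi\mapsto\vZ_{\eps,k}[\vpsi]$ are linear and commute with $\partial_t$. This gives \eqref{eq:correctortime}, and the bounds are uniform in time because $C_{\vpsi}$ depends on finitely many space-time smooth norms.

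\emph{Pointwise estimate \eqref{eq:correctorpointwise}.} At almost every $x\in\Oe$, at most one cell's corrections are nonzero. The bound $|D\vpsi|\le C_{\vpsi}$ handles the first term. The term $|\nabla\vH^i_{\eps,k}(x)|\le Ca(a+|x-x_{\eps,k}|)^{-2}$, supported in $B(x_{\eps,k},R)$, handles the $\vH$ part. For the $\vZ$ part we use $|\nabla\vZ_{\eps,k}|\le C_{\vpsi}$, which is absorbed into the constant term $C_{\vpsi}$.

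The only delicate step is making sure that the sums of localized energies stay at order one rather than blowing up. This is exactly the critical scaling $a\ell^{-3}\to\beta$, which is already encoded in \eqref{eq:Henergy}. The remainder is bookkeeping using disjoint supports.
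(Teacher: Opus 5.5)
Your proposal is correct and follows essentially the same route as the paper. The hole traces, the $L^\infty$, $H^1$ and $L^2$ bounds, and the pointwise bound \eqref{eq:correctorpointwise} are all read off the decomposition \eqref{eq:correctordef} using disjoint supports, \cref{lem:localized} and \eqref{eq:Zbounds}, while \eqref{eq:correctortime} comes from linearity and the time-independence of the local repairs.

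One cosmetic point: \eqref{eq:Henergy} controls the symmetric gradient, so the full-gradient bound $\|\nabla\vH^i_{\eps,k}\|_2^2\le Ca$ is most cleanly taken from the pointwise decay of $\nabla\vH^i_{\eps,k}$. Alternatively, it follows from $\|\nabla\vv\|_2^2=2\|D\vv\|_2^2$ for compactly supported solenoidal fields.
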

\begin{proof}
Solenoidality and all boundary traces follow directly from the construction. Disjointness, \eqref{eq:Henergy} and \eqref{eq:Zbounds} give
\[
 \sum_k\|\vH_{\eps,k}^i\|_2^2\leq C\ell^{-3}a^2R\leq C\ell^4,
 \quad
 \sum_k\|\nabla \vH_{\eps,k}^i\|_2^2\leq C\ell^{-3}a\leq C.
\]
The gradient estimates and \eqref{eq:Zbounds} give the uniform $H^1$ bound. For the $L^2$ error, disjointness and boundedness of the cell-center values yield
\[
 \Big\|\sum_{k,i}\vpsi_i(x_{\eps,k})\vH_{\eps,k}^i\Big\|_2^2
 \leq C_{\vpsi}\#\Ie\,a^2R\leq C_{\vpsi}\ell^4,
 \quad
 \Big\|\sum_k\vZ_{\eps,k}[\vpsi]\Big\|_2^2
 \leq C_{\vpsi}a^2(\#\Ie\,a^3)\leq C_{\vpsi}\ell^{12}.
\]
Hence $\|\Ceps\vpsi-\vpsi\|_2\leq C_{\vpsi}\ell^2$. The pointwise bounds imply the $L^\infty$ estimate and \eqref{eq:correctorpointwise}. Interpolating the $L^2$ error with the uniform $L^\infty$ bound gives the explicit rate used below,
\[
 \|\Ceps\vpsi-\vpsi\|_{L^3(\Omega)}
 \leq \|\Ceps\vpsi-\vpsi\|_2^{2/3}
          \|\Ceps\vpsi-\vpsi\|_\infty^{1/3}
 \leq C_{\vpsi}\ell_\eps^{4/3}.
\]
In particular, the difference converges strongly in every finite $L^q(\Omega)$ by interpolation with the uniform $L^\infty$ bound. Distributional differentiation and the uniform $H^1$ bound identify the weak gradient limit. The geometry and all local repairs are time-independent and linear, so \eqref{eq:correctortime} follows.
\end{proof}

\subsection{A capacity-scale multiplier estimate}
At the critical scale the corrector gradient carries order-one $L^2$ energy. Its product with the viscosity difference is controlled by the following multiplier estimate.

\begin{lemma}[Hardy multiplier at critical scale]\label{lem:multiplier}
For any scalar $h\in H^1(\Omega)$ and any test $\vpsi$ as in \cref{lem:corrector},
\begin{equation}\label{eq:multiplier}
 \|hD\Ceps\vpsi\|_{L^2(\Oe)}\leq C_{\vpsi}\|h\|_{H^1(\Omega)}.
\end{equation}
The constant is independent of $\eps$. In particular, for time-dependent $h$ and smooth $\vpsi$,
\begin{equation}\label{eq:timemultiplier}
 \|hD\Ceps\vpsi\|_{L^2((0,T)\times\Oe)}
       \leq C_{\vpsi}\|h\|_{L^2_tH^1_x}.
\end{equation}
\end{lemma}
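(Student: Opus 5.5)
We start from the pointwise bound \eqref{eq:correctorpointwise}. Squaring it gives, almost everywhere in \(\Oe\),
\[
 |h\,D\Ceps\vpsi|^2\le C_{\vpsi}\Big(|h|^2+\sum_{k\in\Ie}\frac{a_\eps^2\,\1_{B(x_{\eps,k},\ell_\eps/4)}|h|^2}{(a_\eps+|x-x_{\eps,k}|)^4}\Big).
\]
The cross terms between different cells vanish because the balls are disjoint. The first term contributes \(C_{\vpsi}\|h\|_2^2\). The whole task is therefore to show that the cellwise sum is bounded by \(C\|h\|_{H^1(\Omega)}^2\), uniformly in \(\eps\).

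Fix a cell and write \(r=|x-x_{\eps,k}|\), \(B_k=B(x_{\eps,k},\ell/4)\), \(R=\ell/4\). We use the elementary bound
\[
 \frac{a^2}{(a+r)^4}\le\frac{a^2}{(a+r)^2r^2}\le \frac{\min(1,a^2/r^2)}{r^2},
\]
so the weight is at most \(r^{-2}\). Split \(h=(h-(h)_{B_k})+(h)_{B_k}\) on \(B_k\).

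For the oscillating part we apply Hardy's inequality on the ball:
\[
 \int_{B_k}\frac{|w|^2}{r^2}\dx\le C\Big(\int_{B_k}|\nabla w|^2\dx+R^{-2}\int_{B_k}|w|^2\dx\Big),
\]
which is scale invariant. For mean-zero \(w=h-(h)_{B_k}\), the Poincaré inequality turns the right side into \(C\int_{B_k}|\nabla h|^2\dx\).

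For the constant part, \(\int_{B_k}a^2(a+r)^{-4}\dx\le Ca^2\int_0^\infty r^2(a+r)^{-4}\,\mathrm dr=Ca\). Hence its contribution is
\[
 a\,|(h)_{B_k}|^2\le a\,|B_k|^{-1}\int_{B_k}|h|^2\dx\le C\frac{a}{\ell^3}\int_{B_k}|h|^2\dx\le C\int_{B_k}|h|^2\dx,
\]
using the critical scaling \(a_\eps/\ell_\eps^3\to\beta\).

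Summing over the disjoint balls gives \(\sum_k\le C\|h\|_{H^1(\Omega)}^2\), which proves \eqref{eq:multiplier}. The time-dependent version \eqref{eq:timemultiplier} follows by applying \eqref{eq:multiplier} for almost every \(t\) and integrating in time, since \(C_{\vpsi}\) is uniform in time for smooth tests. The products are measurable, and the bound for smooth \(h\) extends to all \(h\in H^1(\Omega)\) by density and Fatou's lemma.

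The main obstacle is the constant (mean) part. There Hardy's inequality alone is useless, and the estimate depends exactly on the critical balance \(a\sim\ell^3\): each cell's corrector energy \(O(a)\) is paid for by the local \(L^2\) mass \(\ell^3|(h)_{B_k}|^2\). The Hardy/Poincaré step needs only the scale-invariant constant on balls, which we obtain from the unit ball by rescaling.
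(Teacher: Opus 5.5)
Your argument is correct and follows the paper's proof. You square the pointwise bound \eqref{eq:correctorpointwise} using disjointness of the balls, split $h$ into a local mean and an oscillation, control the oscillation by Hardy plus Poincar\'e, and control the mean by $\int_{B}a^2(a+r)^{-4}\,\mathrm dx\le Ca$ together with the bounded ratio $a_\eps/\ell_\eps^3$.

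The only difference is cosmetic. The paper uses the cell mean $\fint_Q h$ and applies the whole-space Hardy inequality to a cutoff $\chi_Q(h-\bar h_Q)$, whereas you use the ball mean and a scale-invariant Hardy inequality on the ball; these amount to the same estimate. As the paper remarks, only boundedness of $a_\eps/\ell_\eps^3$ is needed, not its positive limit.
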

\begin{proof}
Fix a cell $Q=Q_{\eps,k}$, write $x_k=x_{\eps,k}$, and put $\bar h_Q=\fint_Qh$. To obtain the localized multiplier bound, choose $\chi_Q\in C_c^\infty(Q)$ with $\chi_Q=1$ on $B(x_k,\ell/4)$ and $|\nabla\chi_Q|\le C/\ell$. Applying Hardy's inequality in $\R^3$ to $\chi_Q(h-\bar h_Q)$ yields
\[
 \int_{B(x_k,\ell/4)}\frac{|h-\bar h_Q|^2}{|x-x_k|^2}\,\mathrm dx
 \le C\int_Q|\nabla h|^2\,\mathrm dx
   +C\ell^{-2}\int_Q|h-\bar h_Q|^2\,\mathrm dx.
\]
The second term is bounded by $C\int_Q|\nabla h|^2\,\mathrm dx$ by the cell Poincar\'e inequality. Since $a^2/(a+r)^4\le r^{-2}$, we obtain
\[
 \int_{B(x_k,\ell/4)}\frac{a^2}{(a+|x-x_k|)^4}|h-\bar h_Q|^2\dx
 \leq C\int_Q|\nabla h|^2\dx.
\]
For the mean part,
\[
 \int_{B(x_k,\ell/4)}\frac{a^2}{(a+|x-x_k|)^4}\dx\leq Ca,
 \quad
 a|\bar h_Q|^2\leq\frac a{\ell^3}\int_Q|h|^2\dx.
\]
Consequently,
\[
 \int_{B(x_k,\ell/4)}\frac{a^2|h|^2}{(a+|x-x_k|)^4}\dx
 \leq C\int_Q|\nabla h|^2\dx
       +C\frac a{\ell^3}\int_Q|h|^2\dx.
\]
Summing over the disjoint cells, using $a/\ell^3\le C$--the uniform bound supplied by the critical assumption $a_\eps/\ell_\eps^3\to\beta$--and \eqref{eq:correctorpointwise}, proves \eqref{eq:multiplier}. Integration in time proves \eqref{eq:timemultiplier}. On the torus the same argument is performed in periodic cell coordinates.
\end{proof}

The multiplier estimate requires a bounded capacity density $a_\eps/\ell_\eps^3$. This condition also includes the small-hole regime in which that ratio tends to zero. The critical feature here is its positive limit $\beta$, which leaves a nonzero corrector energy and produces the Brinkman resistance. The estimate applies at Sobolev energy regularity for the microscopic coefficient.

\subsection{Mixed limits and the viscous lower bound}
The weighted argument begins with a smooth coefficient satisfying fixed positive lower and upper bounds. The Hardy multiplier estimate then extends the pairing to the Sobolev coefficients of \cref{prop:weighted}.

\begin{lemma}[Smooth-coefficient capacity pairing]\label{lem:smoothcapacity}
Let $b$ be smooth on $[0,T]\times\overline\Omega$ and let $\vpsi$ be a smooth solenoidal test, with common compact spatial support in a domain or periodic on the torus. For $\vw_\eps\in L^2(0,T;\cV_\eps)$, define
\[
 \begin{aligned}
 \mathcal R_\eps[b,\vw_\eps,\vpsi]
 =\int_0^T\!\int_\Oe2bD\vw_\eps:D\Ceps\vpsi\dx\dt
 -\int_{\Dt}\big(2bD\widetilde \vw_\eps:D\vpsi
             +b\B\widetilde \vw_\eps\cdot\vpsi\big)\dx\dt.
 \end{aligned}
\]
For sufficiently small $\eps$,
\begin{equation}\label{eq:smoothremainderbound}
 \begin{aligned}
 |\mathcal R_\eps[b,\vw_\eps,\vpsi]|
 \leq C_{\vpsi,T}
       \big(\|b\|_{L^\infty(\Dt)}+\|\nabla b\|_{L^\infty(\Dt)}\big)
\times\left(\ell_\eps+
          \left|\frac{a_\eps}{\ell_\eps^3}-\beta\right|\right)
       \|\widetilde \vw_\eps\|_{L^2_tH^1_x}.
 \end{aligned}
\end{equation}
Consequently, if $\widetilde \vw_\eps\weak \vw$ in $L^2(0,T;\cV)$, then
\begin{equation}\label{eq:smoothmixed}
 \int_0^T\!\int_\Oe2bD\vw_\eps:D\Ceps\vpsi\dx\dt
 \to\int_{\Dt}2bD\vw:D\vpsi\dx\dt
               +\int_{\Dt}b\B \vw\cdot\vpsi\dx\dt.
\end{equation}
For the recovery field itself,
\begin{equation}\label{eq:smoothrecovery}
 \int_0^T\!\int_\Oe2b|D\Ceps\vpsi|^2\dx\dt
 \to\int_{\Dt}\big(2b|D\vpsi|^2+b\B\vpsi\cdot\vpsi\big)\dx\dt.
\end{equation}
\end{lemma}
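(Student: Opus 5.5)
The plan is to expand $\Ceps\vpsi$ through \eqref{eq:correctordef} and treat the three pieces separately. Pointwise in time, the $\vpsi$ piece gives $\int_\Oe 2bD\vw_\eps:D\vpsi=\int_\Omega 2bD\widetilde\vw_\eps:D\vpsi$, since the zero extension has vanishing gradient in $\He$. The $\vZ$ piece is bounded by $C\|b\|_\infty\|\nabla\widetilde\vw_\eps\|_2\,\|\sum_k\nabla\vZ_{\eps,k}\|_2\le C_\vpsi\|b\|_\infty\ell^3\|\widetilde\vw_\eps\|_{H^1}$ by \eqref{eq:Zbounds}. The main work is the capacity piece
\[
 I=-\sum_{k,i}\vpsi_i(x_{\eps,k})\int_\Oe 2bD\vw_\eps:D\vH^i_{\eps,k}\dx .
\]

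On each ball $B_k=B(x_{\eps,k},R)$ I will freeze the coefficient and write $b=b_k+(b-b_k)$ with $b_k=b(t,x_{\eps,k})$. The frozen part is handled by \eqref{eq:shellpairing}, which gives $b_k\int \vG^i_{\eps,k}\cdot\vw_\eps$. The variable part is bounded by $\|\nabla b\|_\infty|x-x_{\eps,k}|$. Combined with $|\nabla\vH|\le Ca/(a+r)^2$, its square integral over $B_k$ is $\lesssim a^2R$, so after Cauchy--Schwarz and summation over the $\ell^{-3}$ cells it contributes $\lesssim\|\nabla b\|_\infty(a^2R\ell^{-3})^{1/2}\|\nabla\widetilde\vw_\eps\|_2\lesssim\ell^2\|\nabla b\|_\infty\|\widetilde\vw_\eps\|_{H^1}$.

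The shell forces will be converted into the Brinkman term in two steps. By \eqref{eq:shellforcebounds}, $\vG^i_{\eps,k}$ is supported in the shell $S_{\eps,k}$ of radius $\sim\ell$, has size $CaR^{-3}$ and mass $-a\M\ve_i$. I write
\[
 \int\vG^i_{\eps,k}\cdot\vw_\eps=-a\M\ve_i\cdot(\widetilde\vw_\eps)_{Q_{\eps,k}}+\int\vG^i_{\eps,k}\cdot\big(\widetilde\vw_\eps-(\widetilde\vw_\eps)_{Q_{\eps,k}}\big).
\]
The cell Poincaré inequality bounds the second term by $CaR^{-3}|S|^{1/2}\ell\|\nabla\widetilde\vw_\eps\|_{L^2(Q)}\sim a\ell^{-1/2}\|\nabla\widetilde\vw_\eps\|_{L^2(Q)}$. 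Summing over cells with Cauchy--Schwarz gives $a\ell^{-1/2}\ell^{-3/2}\|\nabla\widetilde\vw_\eps\|_2\sim\ell\|\nabla\widetilde\vw_\eps\|_2$, which is the $\ell$ rate in \eqref{eq:smoothremainderbound}. What remains is
\[
 \sum_k b_k\,a\,\vpsi(x_{\eps,k})\cdot\M(\widetilde\vw_\eps)_{Q_{\eps,k}}
 =\frac a{\ell^3}\sum_k\int_{Q_{\eps,k}}b_k\,\M\vpsi(x_{\eps,k})\cdot\widetilde\vw_\eps\dx ,
\]
using the symmetry of $\M$. Replacing $b_k\vpsi(x_{\eps,k})$ by $b\vpsi$ costs $\ell\,(\|b\|_\infty+\|\nabla b\|_\infty)C_\vpsi\|\widetilde\vw_\eps\|_2$. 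Replacing $a/\ell^3$ by $\beta$ costs the term $|a/\ell^3-\beta|$. The boundary strip, where $\vpsi$ has compact support, is void for small $\eps$. The result is $\int b\B\widetilde\vw_\eps\cdot\vpsi$ with the correct sign, because of the minus sign in $I$ and the mass $-a\M\ve_i$. Integrating in time proves \eqref{eq:smoothremainderbound}.

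With the remainder bound in hand, \eqref{eq:smoothmixed} follows because the remainder tends to zero and both explicit integrals are weakly continuous in $\widetilde\vw_\eps$: the weights $bD\vpsi$ and $b\B\vpsi$ are fixed $L^2$ fields. For \eqref{eq:smoothrecovery}, \eqref{eq:smoothremainderbound} cannot be applied directly with $\vw_\eps=\Ceps\vpsi$, since $\Ceps\vpsi$ is not time independent in general but is still admissible. I therefore take $\vw_\eps=\Ceps\vpsi\in L^2(0,T;\cV_\eps)$. \Cref{lem:corrector} gives $\Ceps\vpsi\weak\vpsi$ weakly in $L^2(0,T;\cV)$ with uniform bounds, so \eqref{eq:smoothmixed} yields the limit $\int 2b|D\vpsi|^2+b\B\vpsi\cdot\vpsi$. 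This is exactly \eqref{eq:smoothrecovery}, because the left side of \eqref{eq:smoothmixed} is then $\int 2b|D\Ceps\vpsi|^2$.

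The step I expect to be delicate is the shell-force averaging. The force has order-one total mass, so its replacement by a cell average must use Poincaré on the cell rather than the shell, keeping track of the factor $R\sim\ell$. The frozen-coefficient error must also be controlled using the gradient decay of $\vH$ rather than its full energy.
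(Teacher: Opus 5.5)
Your proposal is correct and follows essentially the same route as the paper. The bulk term cancels via the zero extension, the $\vZ$-term is $O(\ell^3)$, the frozen part is converted through \eqref{eq:shellpairing} and the cell Poincar\'e inequality into cell averages with an $O(\ell)$ error, the piecewise-constant coefficient is compared with $\beta b\M\vpsi$, and \eqref{eq:smoothrecovery} follows by taking $\vw_\eps=\Ceps\vpsi$ in \eqref{eq:smoothmixed}. The only differences are minor: your weighted-decay estimate gives an $O(\ell^2)$ freezing error where the paper gets $O(\ell)$, and your remark that \eqref{eq:smoothremainderbound} ``cannot be applied directly'' to $\Ceps\vpsi$ is unnecessary, since that bound holds for every $\vw_\eps\in L^2(0,T;\cV_\eps)$ whether or not it depends on time.
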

\begin{proof}
Begin with $b=1$ at a fixed time. Set
$\bar \vw_{\eps,k}=\ell^{-3}\int_{Q_{\eps,k}}\widetilde \vw_\eps\dx$. The shell force and the cell Poincar\'e inequality imply
\begin{align*}
 \left|\int_\Oe \vG_{\eps,k}^i\cdot
                   (\vw_\eps-\bar \vw_{\eps,k})\dx\right|
 &\leq C aR^{-3/2}\ell
       \|\nabla\widetilde \vw_\eps\|_{L^2(Q_{\eps,k})}\\
 &\leq Ca\ell^{-1/2}
       \|\nabla\widetilde \vw_\eps\|_{L^2(Q_{\eps,k})}.
\end{align*}
Summation and Cauchy--Schwarz give a total error bounded by
\[
 C_{\vpsi} a\ell^{-2}\|\nabla\widetilde \vw_\eps\|_2
       \leq C_{\vpsi}\ell\|\nabla\widetilde \vw_\eps\|_2.
\]
Using \eqref{eq:shellforcebounds}--\eqref{eq:shellpairing}, we obtain the explicit discrete pairing estimate
\begin{equation}\label{eq:discretecapacity}
 \begin{aligned}
 \left|\sum_{k\in\Ie}\sum_{i=1}^3\vpsi_i(x_{\eps,k})
       \int_\Oe2D\vw_\eps:D\vH_{\eps,k}^i\dx
       +a\sum_{k\in\Ie}\M\vpsi(x_{\eps,k})\cdot\bar \vw_{\eps,k}\right|
 \leq C_{\vpsi} a\ell^{-2}\|\nabla\widetilde \vw_\eps\|_2.
 \end{aligned}
\end{equation}
The sum of the cell averages is an integral against a piecewise constant coefficient. For smooth $b$, its comparison with the limiting coefficient is quantified by
\begin{equation}\label{eq:cellcoefficienterror}
 \begin{aligned}
 \left\|\frac a{\ell^3}\sum_k
       b(t,x_{\eps,k})\M\vpsi(t,x_{\eps,k})\1_{Q_{\eps,k}}
       -\beta b(t,\cdot)\M\vpsi(t,\cdot)\right\|_2\\
 \leq C_{\vpsi}\big(\|b\|_{L^\infty(\Dt)}+
                     \|\nabla b\|_{L^\infty(\Dt)}\big)
           \left(\ell+\left|\frac a{\ell^3}-\beta\right|\right),
 \end{aligned}
\end{equation}
uniformly in time. This follows from the mean-value estimate on each cell and the bounded ratio $a/\ell^3$. In a domain, the common support of $\vpsi$ is disjoint from the omitted boundary strip for sufficiently small $\eps$. On the torus all cells are present up to their faces.

Introduce, for fixed $t$,
\[
 b_{\eps,k}=b(t,x_{\eps,k}),\qquad
 \vpsi_{\eps,k}=\vpsi(t,x_{\eps,k}),\qquad
 \bar\vw_{\eps,k}=\ell^{-3}\int_{Q_{\eps,k}}\widetilde\vw_\eps\,\mathrm dx,
\]
and the piecewise constant coefficient
\[
 \boldsymbol q_\eps(t,x)=\frac a{\ell^3}\sum_{k\in\Ie}
 b_{\eps,k}\M\vpsi_{\eps,k}\,\1_{Q_{\eps,k}}(x).
\]
Because the zero extension of $\vw_\eps$ has weak gradient equal to the zero extension of $\nabla\vw_\eps$, the bulk term in the definition of $\mathcal R_\eps$ cancels exactly. Using \eqref{eq:correctordef}, the remainder admits the exact decomposition
\begin{equation}\label{eq:remainderdecomposition}
 \mathcal R_\eps
 =\mathcal R_\eps^{\mathrm{shell}}
 +\mathcal R_\eps^{\mathrm{freeze}}
 +\mathcal R_\eps^{\mathrm{cell}}
 +\mathcal R_\eps^{Z},
\end{equation}
where
\begin{align*}
 \mathcal R_\eps^{\mathrm{shell}}
 &:=-\int_0^T\sum_{k\in\Ie} b_{\eps,k}
 \left[
 \sum_{i=1}^3\vpsi_{\eps,k,i}
 \int_{\Oe}2D\vw_\eps:D\vH_{\eps,k}^i\,\mathrm dx
 +a\M\vpsi_{\eps,k}\cdot\bar\vw_{\eps,k}
 \right]\,\mathrm dt,\\
 \mathcal R_\eps^{\mathrm{freeze}}
 &:=-\int_0^T\sum_{k\in\Ie}\sum_{i=1}^3
 \vpsi_{\eps,k,i}
 \int_{\Oe}2\bigl(b-b_{\eps,k}\bigr)
 D\vw_\eps:D\vH_{\eps,k}^i\,\mathrm dx\,\mathrm dt,\\
 \mathcal R_\eps^{\mathrm{cell}}
 &:=\int_{\Dt}\bigl(\boldsymbol q_\eps-\beta b\M\vpsi\bigr)
 \cdot\widetilde\vw_\eps\,\mathrm dx\,\mathrm dt,\\
 \mathcal R_\eps^{Z}
 &:=-\int_0^T\!\int_{\Oe}
 2bD\vw_\eps:D\!\left(\sum_{k\in\Ie}\vZ_{\eps,k}[\vpsi]\right)
 \,\mathrm dx\,\mathrm dt.
\end{align*}
Indeed,
\[
 \int_\Omega \boldsymbol q_\eps\cdot\widetilde\vw_\eps\,\mathrm dx
 =a\sum_{k\in\Ie}b_{\eps,k}\M\vpsi_{\eps,k}\cdot\bar\vw_{\eps,k},
\]
so the two inserted cell terms in \eqref{eq:remainderdecomposition} cancel algebraically, while $\B=\beta\M$ supplies the target Brinkman term.

The four pieces can now be estimated independently. Applying \eqref{eq:discretecapacity} at each time with coefficients $b_{\eps,k}\vpsi_{\eps,k}$ and then Cauchy--Schwarz in time gives
\[
 |\mathcal R_\eps^{\mathrm{shell}}|
 \le C_{\vpsi,T}\|b\|_{L^\infty(\Dt)}\,
 \ell\,\|\widetilde\vw_\eps\|_{L^2_tH^1_x}.
\]
On the support of $\vH_{\eps,k}^i$, $|b-b_{\eps,k}|\le R\|\nabla b\|_\infty\le C\ell\|\nabla b\|_\infty$. Hence
\[
 |\mathcal R_\eps^{\mathrm{freeze}}|
 \le C_{\vpsi}\ell\|\nabla b\|_{L^\infty(\Dt)}
 \int_0^T\|D\widetilde\vw_\eps(t)\|_2
 \left(\sum_{k,i}\|D\vH_{\eps,k}^i\|_2^2\right)^{1/2}\!\mathrm dt.
\]
By \eqref{eq:Henergy},
\[
 \sum_{k,i}\|D\vH_{\eps,k}^i\|_2^2
 \le C\#\Ie\left(a+\frac{a^2}{R}\right)
 \le C,
\]
because $\#\Ie\le C\ell^{-3}$, $a/\ell^3$ is bounded and $R=\ell/4$. Therefore
\[
 |\mathcal R_\eps^{\mathrm{freeze}}|
 \le C_{\vpsi,T}\ell\|\nabla b\|_{L^\infty(\Dt)}
 \|\widetilde\vw_\eps\|_{L^2_tH^1_x}.
\]
The coefficient estimate \eqref{eq:cellcoefficienterror} gives directly
\[
 |\mathcal R_\eps^{\mathrm{cell}}|
 \le C_{\vpsi,T}
 \bigl(\|b\|_\infty+\|\nabla b\|_\infty\bigr)
 \left(\ell+\left|\frac a{\ell^3}-\beta\right|\right)
 \|\widetilde\vw_\eps\|_{L^2_tL^2_x}.
\]
Equivalently, if
$\Pi_\eps\widetilde\vw_\eps=\sum_k\bar\vw_{\eps,k}\1_{Q_{\eps,k}}$, then Jensen's inequality yields
\[
 \|\Pi_\eps\widetilde\vw_\eps\|_{L^2(\Omega)}^2
 =\sum_k\ell^3|\bar\vw_{\eps,k}|^2
 \le\|\widetilde\vw_\eps\|_{L^2(\Omega)}^2,
\]
which gives the same stability in the cell-average representation. Finally, \eqref{eq:Zbounds} implies
\[
 |\mathcal R_\eps^Z|
 \le C_{\vpsi,T}\|b\|_\infty\ell^3
 \|\widetilde\vw_\eps\|_{L^2_tH^1_x}.
\]
Combining these four bounds with \eqref{eq:remainderdecomposition} proves \eqref{eq:smoothremainderbound}. The sign of the Brinkman term is fixed by the negative total shell force in \eqref{eq:shellforcebounds} together with the subtraction of the defect field in \eqref{eq:correctordef}.

If $\widetilde\vw_\eps\rightharpoonup\vw$ in $L^2(0,T;\cV)$, estimate \eqref{eq:smoothremainderbound} tends to zero because $\ell_\eps\to0$ and $a_\eps/\ell_\eps^3\to\beta$. Weak convergence of the two ambient terms then gives \eqref{eq:smoothmixed}.

To obtain \eqref{eq:smoothrecovery}, apply \eqref{eq:smoothmixed} with $\vw_\eps=\Ceps\vpsi$. By \cref{lem:corrector}, its zero extension converges weakly to $\vpsi$ in $L^2(0,T;\cV)$. Hence the mixed limit gives exactly
\[
 \lim_{\eps\to0}\int_0^T\!\int_\Oe2b|D\Ceps\vpsi|^2\dx\dt
       =\int_{\Dt}\big(2b|D\vpsi|^2+b\B\vpsi\cdot\vpsi\big)\dx\dt.
\]

\end{proof}

The smooth pairing depends only on the perforated velocity spaces. For variable coefficients, \cref{lem:multiplier} controls the replacement error inside the concentrated gradient layer.

\begin{proposition}[Weighted critical limit]\label{prop:weighted}
Let $0<b_*\le b^*<\infty$. Suppose that $b_\eps,b\in L^2(0,T;H^1(\Omega))$ satisfy
\begin{equation}\label{eq:coeffhyp}
 0<b_*\leq b_\eps,b\leq b^*\quad\text{for almost every }(t,x)\in\Dt,\quad
 b_\eps\to b\quad\text{strongly in }L^2_tH^1_x.
\end{equation}
Let $\vw_\eps\in L^2(0,T;\cV_\eps)$ and assume $\widetilde \vw_\eps\weak \vw$ in $L^2(0,T;\cV)$. For any smooth time-dependent test $\vpsi$ with $\vpsi(t)\in\Ssig$,
\begin{equation}\label{eq:weightedmixed}
 \int_0^T\!\int_\Oe2b_\eps D\vw_\eps:D\Ceps\vpsi\dx\dt
 \to\int_{\Dt}2bD\vw:D\vpsi\dx\dt
                 +\int_{\Dt}b\B \vw\cdot\vpsi\dx\dt.
\end{equation}
The recovery identity \eqref{eq:smoothrecovery} remains valid with $b_\eps$ on the left and $b$ on the right. In addition,
\begin{equation}\label{eq:weightedliminf}
 \liminf_{\eps\to0}\int_0^T\!\int_\Oe2b_\eps|D\vw_\eps|^2\dx\dt
 \geq\int_{\Dt}\big(2b|D\vw|^2+b\B \vw\cdot \vw\big)\dx\dt.
\end{equation}
The same assertions hold on every subinterval of $(0,T)$.
\end{proposition}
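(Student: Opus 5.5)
We reduce the variable-coefficient pairing to \cref{lem:smoothcapacity} by an approximation argument in two stages. First we replace $b_\eps$ by $b$, and then we replace $b$ by a smooth coefficient $\tilde b$. For \eqref{eq:weightedmixed} we split
\[
 \int_0^T\!\!\int_\Oe 2b_\eps D\vw_\eps:D\Ceps\vpsi
 =\int_0^T\!\!\int_\Oe 2(b_\eps-b)D\vw_\eps:D\Ceps\vpsi
 +\int_0^T\!\!\int_\Oe 2(b-\tilde b)D\vw_\eps:D\Ceps\vpsi
 +\int_0^T\!\!\int_\Oe 2\tilde bD\vw_\eps:D\Ceps\vpsi .
\]
By Cauchy--Schwarz and the uniform bound on $\|D\widetilde\vw_\eps\|_{L^2(\Dt)}$, the first term is at most $C\|(b_\eps-b)D\Ceps\vpsi\|_{L^2}$. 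By \eqref{eq:timemultiplier} with $h=b_\eps-b$, this is at most $C_{\vpsi}\|b_\eps-b\|_{L^2_tH^1_x}\to0$. This step is exactly where the strong $L^2_tH^1_x$ convergence is indispensable, since $D\Ceps\vpsi$ concentrates order-one energy on a set of vanishing measure.

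For the second term we choose smooth $\tilde b$ with $\|b-\tilde b\|_{L^2_tH^1_x}\le\delta$, for example by mollification in space--time after extension, truncated so that it keeps the bounds $b_*/2\le\tilde b\le 2b^*$. Again by \eqref{eq:timemultiplier}, this term is bounded by $C_{\vpsi}\delta$ uniformly in $\eps$. The third term converges to the right side of \eqref{eq:weightedmixed} with $\tilde b$ by \eqref{eq:smoothmixed}. Replacing $\tilde b$ by $b$ on the limit side costs $C_{\vpsi}\delta\|\vw\|$, because $\vpsi$, $D\vpsi$ are bounded and $\B$ is a constant matrix. Letting $\eps\to0$ and then $\delta\to0$ gives \eqref{eq:weightedmixed}.

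The recovery identity follows by the same splitting with $\vw_\eps=\Ceps\vpsi$. The needed weak convergence $\widetilde{\Ceps\vpsi}\weak\vpsi$ in $L^2(0,T;\cV)$ comes from \cref{lem:corrector}. The replacement errors involve $\|(b_\eps-b)|D\Ceps\vpsi|^2\|_{L^1}$, which is at most $\|(b_\eps-b)D\Ceps\vpsi\|_{L^2}\,\|D\Ceps\vpsi\|_{L^2}$; both factors are controlled by \cref{lem:multiplier} and \cref{lem:corrector}. The limit for the smooth coefficient is then \eqref{eq:smoothrecovery}.

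For \eqref{eq:weightedliminf} we use convexity. Since $b_\eps\ge0$, we have pointwise
\[
 2b_\eps|D\vw_\eps|^2\ge 4b_\eps D\vw_\eps:D\Ceps\vpsi-2b_\eps|D\Ceps\vpsi|^2 .
\]
Integrating and passing to the limit with \eqref{eq:weightedmixed} and the weighted recovery identity gives the lower bound
\[
 \int_{\Dt}\big(4bD\vw:D\vpsi+2b\B\vw\cdot\vpsi-2b|D\vpsi|^2-b\B\vpsi\cdot\vpsi\big).
\]
This functional is continuous in $\vpsi$ for the $L^2_tH^1_x$ topology, because $b$ is bounded. We then let smooth solenoidal $\vpsi$ approach $\vw$ in $L^2(0,T;\cV)$, using density of $\Ssig$ in $\cV$ and smooth time-dependent approximation. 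In the limit the bound becomes $\int 2b|D\vw|^2+b\B\vw\cdot\vw$, which is the claim. Positive-definiteness of $\M$ is not even needed for this step, since the identity is algebraic.

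For subintervals $(t_1,t_2)$ we repeat the argument with the integrals restricted. Equivalently, we multiply by a time indicator approximated by smooth cutoffs, which commute with $\Ceps$ by \eqref{eq:correctortime}. The main obstacle is the first replacement step, and it is fully handled by the Hardy multiplier \cref{lem:multiplier}. The only delicate point is to check that the smooth approximants $\tilde b$ can be chosen with uniform $L^\infty$ and $W^{1,\infty}$ bounds at fixed $\delta$; this is all that \eqref{eq:smoothremainderbound} requires.
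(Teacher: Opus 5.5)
Your proposal is correct and follows essentially the same route as the paper. The Hardy multiplier (\cref{lem:multiplier}) removes both $b_\eps-b$ and $b-\tilde b$ uniformly in $\eps$, and \cref{lem:smoothcapacity} handles the smooth coefficient with $\eps\to0$ taken before $\delta\to0$; the recovery identity is the case $\vw_\eps=\Ceps\vpsi$, and the lower bound comes from expanding the square and using density in $L^2(0,T;\cV)$. The only cosmetic differences are these: the paper truncates the extension to $[b_*,b^*]$ before mollifying, which keeps the approximant genuinely smooth and makes your closing caveat about $W^{1,\infty}$ bounds unnecessary, and, as you note, the coercivity the paper mentions is not actually needed for the density step.
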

\begin{proof}
By \cref{lem:multiplier}, we have
\begin{equation}\label{eq:coefficientremainder}
 \left|\int_0^T\!\int_\Oe2(b_\eps-b)D\vw_\eps:D\Ceps\vpsi\dx\dt\right|
 \leq C_{\vpsi}\|D\widetilde \vw_\eps\|_{L^2(\Dt)}
                   \|b_\eps-b\|_{L^2_tH^1_x}\to0.
\end{equation}
Assume first that $\Omega$ is bounded. Choose a bounded Lipschitz domain $\Omega'$ such that $\overline\Omega\Subset\Omega'$. Since $\Omega$ is Lipschitz, there is a bounded linear extension operator $\mathscr E_\Omega:H^1(\Omega)\to H^1(\mathbb R^3)$. After reflection in time across $t=0$ and $t=T$, restrict the extension to $(-T,2T)\times\Omega'$ and set
\[
 \widehat b=\mathcal T(\mathscr E_\Omega b),
 \qquad
 \mathcal T(r)=\min\{b^*,\max\{b_*,r\}\}.
\]
Because $\mathcal T$ is $1$-Lipschitz, the Sobolev chain rule gives
\[
 |\nabla\widehat b|\le |\nabla\mathscr E_\Omega b|
 \quad\text{a.e. on }(-T,2T)\times\Omega'.
\]
Moreover, $b_*\le \widehat b\le b^*$ and $\widehat b=b$ on $(0,T)\times\Omega$. Let $\rho_\delta$ be a nonnegative standard space--time mollifier with unit mass and choose
\[
 0<\delta<\tfrac12\min\{T,\operatorname{dist}(\Omega,\partial\Omega')\}.
\]
Then
\[
 b_\delta=(\rho_\delta*\widehat b)|_{(0,T)\times\Omega}
\]
is smooth on $[0,T]\times\overline\Omega$, satisfies
\[
 b_*\le b_\delta\le b^*,
 \qquad
 b_\delta\to b
 \quad\text{strongly in }L^2(0,T;H^1(\Omega)),
\]
and can therefore be used in \cref{lem:smoothcapacity}. On the torus, use the periodic spatial extension and the same reflected-time mollification. The multiplier estimate gives
\begin{equation}\label{eq:smoothinguniform}
 \begin{aligned}
 &\sup_\eps\left|\int_0^T\!\int_\Oe
        2(b-b_\delta)D\vw_\eps:D\Ceps\vpsi\dx\dt\right|\\
 &\quad\leq C_{\vpsi}
       \left(\sup_\eps\|D\widetilde \vw_\eps\|_{L^2(\Dt)}\right)
       \|b-b_\delta\|_{L^2_tH^1_x}.
 \end{aligned}
\end{equation}
Fix $\delta>0$ and apply \cref{lem:smoothcapacity} to $b_\delta$. The smooth-coefficient remainder may depend on this fixed approximation. After passing $\eps\to0$, the uniform estimate \eqref{eq:smoothinguniform} permits the limit $\delta\to0$. In the limiting integrals the replacement error is bounded by
\[
 C_{\vpsi}\|\vw\|_{L^2_tH^1_x}\|b-b_\delta\|_{L^2_tL^2_x}\to0,
\]
since $D\vpsi$, $\vpsi$ and $\B$ are bounded. Together with \eqref{eq:coefficientremainder}--\eqref{eq:smoothinguniform}, this proves \eqref{eq:weightedmixed} in the stated order of limits.

For the recovery identity, \cref{lem:corrector} gives $\Ceps\vpsi\in L^2(0,T;\cV_\eps)$ and a uniform $L^2(0,T;H^1(\Omega))$ bound for its zero extension. Moreover, $\|\Ceps\vpsi-\vpsi\|_{L^2(\Dt)}\to0$, so every weak $L^2(0,T;\cV)$ limit of the corrected tests equals $\vpsi$. Hence $\widetilde{\Ceps\vpsi}\rightharpoonup\vpsi$ in $L^2(0,T;\cV)$ and \eqref{eq:weightedmixed} applies with $\vw_\eps=\Ceps\vpsi$. This proves the weighted recovery identity with the same coefficient approximation and capacity normalization.

For $\vz,\vv\in L^2(0,T;\cV_\eps)$ and, respectively, $\vz,\vv\in L^2(0,T;\cV)$, define
\[
 \mathfrak a_\eps(\vz,\vv)=\int_0^T\!\int_\Oe2b_\eps D\vz:D\vv\dx\dt,
 \quad
 \mathfrak a_b(\vz,\vv)=\int_{\Dt}\big(2bD\vz:D\vv+b\B \vz\cdot \vv\big)\dx\dt.
\]
For a smooth solenoidal $\vv$, positivity gives
\[
 \mathfrak a_\eps(\vw_\eps,\vw_\eps)
 \geq2\mathfrak a_\eps(\vw_\eps,\Ceps \vv)-\mathfrak a_\eps(\Ceps \vv,\Ceps \vv).
\]
Taking the lower limit and using the two identities already proved yields
\[
 \liminf \mathfrak a_\eps(\vw_\eps,\vw_\eps)\geq2\mathfrak a_b(\vw,\vv)-\mathfrak a_b(\vv,\vv).
\]
Smooth solenoidal fields of the prescribed type are dense in \(L^2(0,T;\cV)\). The form \(\mathfrak a_b\) is continuous and coercive. In a bounded domain, Korn--Poincar\'e gives
\[
 \mathfrak a_b(\vz,\vz)\ge 2b_*\|D\vz\|_{L^2(\Dt)}^2
 \ge c\|\vz\|_{L^2(0,T;H^1_0(\Omega))}^2.
\]
On the torus, let $\lambda_{\min}(\B)>0$ denote the smallest eigenvalue of the positive-definite matrix $\B$. Then
\[
 \mathfrak a_b(\vz,\vz)
 \ge 2b_*\|D\vz\|_{L^2(\Dt)}^2
    +b_*\lambda_{\min}(\B)\|\vz\|_{L^2(\Dt)}^2
 \ge c\|\vz\|_{L^2(0,T;H^1(\Omega))}^2,
\]
where the last inequality is the periodic Korn inequality with the \(L^2\) term retaining the constant modes. Approximate \(\vw\) by smooth solenoidal \(\vv\) in this norm to obtain \eqref{eq:weightedliminf}. Restricting all integrals and coefficient convergences to a subinterval proves the final assertion.
\end{proof}

The recovery identity concerns prescribed corrected smooth tests. For a general weakly convergent velocity sequence, the variational conclusion at this level is the lower bound \eqref{eq:weightedliminf}.

\begin{remark}\label{rem:coefficientexample}
The $H^1$ topology in \cref{prop:weighted} reflects the capacity scale of the corrector gradients. On the torus, fix $i\in\{1,2,3\}$ and choose $0\leq\eta_0\in C_c^\infty(U)$ such that
\[
 \int_U\eta_0|D\vh^i|^2\,\mathrm dx>0.
\]
For $c>0$, set
\[
 b_\eps(x)=1+c\sum_k\eta_0\!\left(\frac{x-x_{\eps,k}}{a_\eps}\right).
\]
The supports are disjoint with $\#\Ie=\ell_\eps^{-3}$ and $a_\eps/\ell_\eps^3\to\beta$.  Hence
\[
 \|b_\eps-1\|_2^2
 =c^2\frac{a_\eps^3}{\ell_\eps^3}\|\eta_0\|_{L^2(U)}^2\to0,
 \quad
 \|\nabla b_\eps\|_2^2
 =c^2\frac{a_\eps}{\ell_\eps^3}\|\nabla\eta_0\|_{L^2(U)}^2
 \to c^2\beta\|\nabla\eta_0\|_{L^2(U)}^2.
\]
For the constant vector test $\ve_i$, the local variation correction vanishes and the exterior cutoff equals $1$ on $\operatorname{supp}\eta_0$. Consequently,
\[
 \int_\Oe2(b_\eps-1)|D\Ceps\ve_i|^2\,\mathrm dx
 \to2c\beta\int_U\eta_0|D\vh^i|^2\,\mathrm dx>0.
\]
Thus an $L^2$-small coefficient perturbation can retain a nonzero capacity-layer contribution. \Cref{prop:strongphase} supplies the $L^2_tH^1_x$ topology used in \cref{prop:weighted} to control this contribution.
\end{remark}

For the NSCH sequence, \cref{prop:strongphase} gives precisely \eqref{eq:coeffhyp} with $b_\eps=\nu(\Eeps\phi_\eps)$ and $b=\nu(\phi)$. The abstract pairing and lower bound therefore apply to the microscopic viscous stress.

\section{Velocity compactness and the coupled limit}\label{sec:limit}
With the phase compactness and the weighted capacity limit established, it remains to obtain strong compactness of the velocity and to pass to the coupled equations. These steps, together with the dissipation lower bound and the pressure reconstruction, complete the proof of \cref{thm:main}.

\subsection{Compactness through corrected modes}
Time compactness of the ambient velocity is obtained from corrected fixed-space modes, which yield strong $L^2$ compactness.

\begin{lemma}[Strong velocity compactness]\label{lem:velocitycompactness}
Under the assumptions of \cref{thm:main}, the sequence $\widetilde \vu_\eps$ is relatively compact in $L^2(0,T;\cH)$. Along a convergent subsequence it also satisfies the uniform weak-time convergence in \eqref{eq:strongtimetraces}, with a limit in $C_w([0,T];\cH)$ and initial value $\vu^0$.
\end{lemma}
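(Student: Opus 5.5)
The strategy mirrors the fixed-mode argument in \cref{lem:phaseL2}: control time oscillations of $\widetilde\vu_\eps$ against finitely many modes, and control the tail by the uniform $L^2_tH^1_x$ bound \eqref{eq:uniformenergy}.

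Let $\{\vpsi_j\}_{j\ge1}\subset\Ssig$ be an $L^2$-orthonormal basis of $\cH$ consisting of smooth solenoidal fields: Stokes eigenfunctions in the bounded case (cut off and approximated in $H^1$ if compact support is needed, or simply a Gram--Schmidt orthonormalization of a dense countable subset of $\Ssig$), and periodic divergence-free Fourier modes on $\T$. Let $\mathcal P_N$ be the $L^2$ projection onto $\operatorname{span}\{\vpsi_1,\dots,\vpsi_N\}$. I would not use $\cV\hookrightarrow\hookrightarrow\cH$ directly. Instead I would argue by approximation: for every $\delta>0$ there is $N$ with
\[
\int_0^T\|(\Id-\mathcal P_N)\widetilde\vu_\eps\|_2^2\dt\le\delta
\]
uniformly in $\eps$. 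This follows from the bound of $\widetilde\vu_\eps$ in $L^2(0,T;\cV)$, since $\cV$ embeds compactly in $\cH$ and $\sup_{\|v\|_{H^1}\le1}\|(\Id-\mathcal P_N)v\|_2\to0$. That uniform operator-norm decay holds because $\mathcal P_N\to\Id$ strongly on $\cH$ and the $H^1$ unit ball is $L^2$-precompact.

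Relative compactness then reduces to equicontinuity and boundedness of the scalar coefficients $c_{\eps,j}(t)=(\widetilde\vu_\eps(t),\vpsi_j)_\Omega$. We cannot test the microscopic momentum equation with $\vpsi_j$, since it does not vanish on the holes. So I would use the corrected test $\Ceps\vpsi_j\in\cV_\eps$ from \cref{lem:corrector} and write
\[
c_{\eps,j}=d_{\eps,j}+(\widetilde\vu_\eps,\vpsi_j-\Ceps\vpsi_j),\qquad d_{\eps,j}(t)=(\vu_\eps(t),\Ceps\vpsi_j)_{\Oe}.
\]
The correction term is bounded by $C_j\ell_\eps^2\|\vu_\eps\|_{L^\infty_tL^2_x}\to0$ uniformly in $t$. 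For $d_{\eps,j}$, the weak momentum identity with time-independent test $\Ceps\vpsi_j$ gives
\[
d_{\eps,j}'=\int_\Oe\big(\vu_\eps\otimes\vu_\eps:\nabla\Ceps\vpsi_j-2\nu(\phi_\eps)D\vu_\eps:D\Ceps\vpsi_j+(\vf_\eps-\lambda_\eps\phi_\eps\nabla\mu_\eps)\cdot\Ceps\vpsi_j\big)\dx.
\]
Each term is bounded in $L^1(0,T)$, and in fact in $L^{4/3}$, uniformly in $\eps$:
\begin{itemize}
\item the convective term, by $\|\vu_\eps\|_{L^4}^2\le C\|\vu_\eps\|_2^{1/2}\|\vu_\eps\|_{H^1}^{3/2}$ together with $\|\nabla\Ceps\vpsi_j\|_2\le C_j$;
\item the viscous term, by $\nu\le\nu^*$ and the $H^1$ bound on $\Ceps\vpsi_j$;
\item the capillary term, by $\|\phi_\eps\|_{L^\infty_tL^3}\|\nabla\mu_\eps\|_{L^2_tL^2}\|\Ceps\vpsi_j\|_\infty$, using \eqref{eq:uniformenergy} and \cref{lem:corrector}.
\end{itemize}
Hence $d_{\eps,j}$ is bounded in $W^{1,4/3}(0,T)$ and therefore equicontinuous. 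By Arzel\`a--Ascoli and a diagonal extraction, every $c_{\eps,j}$ converges in $C([0,T])$. Combining this with the tail estimate shows that $\widetilde\vu_\eps$ is Cauchy in $L^2(0,T;\cH)$, so it converges strongly to $\vu$.

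For \eqref{eq:strongtimetraces}, the uniform convergence of the coefficients gives, for $\vz=\vpsi_j$, that $\sup_t|(\widetilde\vu_\eps-\vu,\vz)|\to0$. The claim extends to general $\vz\in\cH$ by density, using the uniform $L^\infty_tL^2_x$ bound. It follows that $\vu\in C_w([0,T];\cH)$ as a uniform limit of weakly continuous functions. At $t=0$, $d_{\eps,j}(0)=(\vu_\eps^0,\Ceps\vpsi_j)\to(\vu^0,\vpsi_j)$ by \eqref{eq:data}, which identifies $\vu(0)=\vu^0$.

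The main technical point is the one just handled: the fixed modes are not admissible microscopically. The order-one energy of $\Ceps\vpsi_j$ in the capacity layer is harmless here, because only a uniform $H^1$ bound on the corrector is needed and no limit identification is required. What remains to check is that the weak formulation with the time-dependent test $\theta(t)\Ceps\vpsi_j$ yields the distributional derivative formula above, which follows from \eqref{eq:correctortime}.
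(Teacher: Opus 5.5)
Your proposal is correct and follows essentially the same route as the paper. Both arguments bound the corrected coefficients $(\vu_\eps(t),\Ceps\vpsi_j)_{\Oe}$ in $W^{1,4/3}(0,T)$ through the viscous, convective, forcing and capillary pairings. Both compare them uniformly in time with the ordinary coefficients, with an $O(\ell_\eps^2)$ error, and control the tail using $\cV\hookrightarrow\hookrightarrow\cH$. Both then obtain the weak time traces and the initial value by density.

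Of your basis options, only the Gram--Schmidt orthonormalization of a dense subset of $\Ssig$ (the paper's choice) works in a bounded domain, because cutting off Stokes eigenfunctions would destroy solenoidality.
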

\begin{proof}
Let $\{\vpsi_j\}_{j\geq1}\subset\Ssig$ be an $L^2$-orthonormal basis of $\cH$ obtained by orthonormalizing a countable dense family of smooth solenoidal fields. In the toroidal case, the constant vector fields are included in the initial dense family. For fixed $j$ define
\[
 c_{\eps,j}(t)=\int_\Oe \vu_\eps(t)\cdot\Ceps\vpsi_j\,\mathrm dx.
\]
Testing the momentum equation with $\theta(t)\Ceps\vpsi_j$, $\theta\in C_c^\infty(0,T)$, shows that $c_{\eps,j}'$ is the sum of the viscous, convective, forcing and capillary pairings. The estimates below are uniform in $\eps$ for each fixed $j$:
\[
\begin{aligned}
 \|2\nu(\phi_\eps)D\vu_\eps:D\Ceps\vpsi_j\|_{L^1(\Oe)}
 &\leq C_j\|D\vu_\eps\|_{L^2(\Oe)},\\
 \left|\int_\Oe(\vu_\eps\otimes\vu_\eps):\nabla\Ceps\vpsi_j\,\mathrm dx\right|
 &\leq C_j\|\vu_\eps\|_{L^2(\Oe)}^{1/2}
              \|\widetilde{\vu}_\eps\|_{H^1(\Omega)}^{3/2},\\
 \left|\int_\Oe\phi_\eps\nabla\mu_\eps\cdot\Ceps\vpsi_j\,\mathrm dx\right|
 &\leq C_j\|\phi_\eps\|_{L^6(\Oe)}\|\nabla\mu_\eps\|_{L^2(\Oe)},\\
 \left|\int_\Oe\vf_\eps\cdot\Ceps\vpsi_j\,\mathrm dx\right|
 &\leq C_j\|\vf_\eps\|_{L^2(\Oe)}.
\end{aligned}
\]
The first, third and fourth right-hand sides belong to $L^2(0,T)$. The convective bound belongs to $L^{4/3}(0,T)$ because $\widetilde{\vu}_\eps\in L^\infty_tL^2_x\cap L^2_tL^6_x$ and the Gagliardo--Nirenberg interpolation estimate gives $\|\vu_\eps\|_4^2\leq C\|\vu_\eps\|_2^{1/2}\|\widetilde{\vu}_\eps\|_{H^1}^{3/2}$. Therefore
\[
 \sup_\eps\|c_{\eps,j}\|_{W^{1,4/3}(0,T)}\leq C_j.
\]
In particular,
\[
 |c_{\eps,j}(t)-c_{\eps,j}(s)|
 \leq C_j|t-s|^{1/4},\quad s,t\in[0,T],
\]
and the family is relatively compact in $C([0,T])$ by the Arzel\`a--Ascoli theorem.

The corrected coefficient differs uniformly in time from the ordinary Fourier coefficient:
\[
 \sup_{t\in[0,T]}
 \left|\big(\widetilde{\vu}_\eps(t),\vpsi_j\big)_\Omega-c_{\eps,j}(t)\right|
 \leq \|\widetilde{\vu}_\eps\|_{L^\infty_tL^2_x}
       \|\vpsi_j-\Ceps\vpsi_j\|_{L^2(\Omega)}\to0.
\]
After a diagonal extraction, every fixed coefficient
$t\mapsto(\widetilde{\vu}_\eps(t),\vpsi_j)_\Omega$ therefore converges uniformly on $[0,T]$.

To pass from finitely many modes to the full velocity, let
\[
 \mathcal P_N^{\vu}\vz=\sum_{j=1}^N(\vz,\vpsi_j)_\Omega\vpsi_j,
 \quad \vz\in\cH.
\]
Since $\cV\hookrightarrow\hookrightarrow\cH$, the spectral tail numbers
\[
 \delta_N:=\sup_{\substack{\vz\in\cV\\ \|\vz\|_{H^1(\Omega)}\leq1}}
          \|(\operatorname{Id}-\mathcal P_N^{\vu})\vz\|_{L^2(\Omega)}
\]
satisfy $\delta_N\to0$. Hence
\[
 \int_0^T\|(\operatorname{Id}-\mathcal P_N^{\vu})
        \widetilde{\vu}_\eps(t)\|_2^2\,\mathrm dt
 \leq \delta_N^2\|\widetilde{\vu}_\eps\|_{L^2_tH^1_x}^2
 \leq C_T\delta_N^2.
\]
For fixed $N$, uniform convergence of the first $N$ coefficients yields compactness of
$\mathcal P_N^{\vu}\widetilde{\vu}_\eps$ in $L^2(0,T;\cH)$. Taking first $\eps\to0$ and then $N\to\infty$ proves relative compactness of $\widetilde{\vu}_\eps$ in $L^2(0,T;\cH)$. 

The same mode estimates also yield the weak time trace. For any finite linear combination $\vz_N=\sum_{j=1}^N\alpha_j\vpsi_j$,
\[
 \sup_{t\in[0,T]}
 \left|\big(\widetilde{\vu}_\eps(t)-\vu(t),\vz_N\big)_\Omega\right|\to0.
\]
Given $\vz\in\cH$, choose $\vz_N$ in the span of $\{\vpsi_1,\ldots,\vpsi_N\}$ with $\|\vz-\vz_N\|_{\cH}\to0$. Then
\[
 \sup_{t\in[0,T]}|((\widetilde{\vu}_\eps-\vu)(t),\vz-\vz_N)_\Omega|
 \leq C_T\|\vz-\vz_N\|_{\cH},
\]
uniformly in $\eps$. First letting $\eps\to0$ at fixed $N$ and then $N\to\infty$ yields the assertion for $\vz$. The limiting pairings are continuous in time and define a representative $\vu\in C_w([0,T];\cH)$. Finally, the convergence of the initial data and the corrected-test estimate identify $\vu(0)=\vu^0$.
\end{proof}

Together with the phase estimates, the corrected-mode compactness gives the convergences required for the weak limit passage.

\subsection{Homogenization process}
The compactness results above are sufficient to identify every term in the effective equations of \cref{thm:main}. The uniform estimates, \cref{lem:phaseL2}, \cref{prop:strongphase} and \cref{lem:velocitycompactness} give \eqref{eq:mainconvergences}--\eqref{eq:strongtimetraces}. Incompressibility and the outer no-slip condition pass to the weak limit in $L^2(0,T;\cV)$.

Let $\vpsi$ be a smooth solenoidal test with $\vpsi(T)=0$, and test the microscopic momentum equation by $\Ceps\vpsi$. Time differentiation commutes with the correction. The velocity, force and initial terms therefore converge by the strong $L^2$ corrector bound and \eqref{eq:data}. For the viscous term, apply \cref{prop:weighted} with
\[
 b_\eps=\nu(\Eeps\phi_\eps),\quad b=\nu(\phi),
 \quad \vw_\eps=\vu_\eps.
\]
This gives
\[
 \int_0^T\!\int_\Oe2\nu(\phi_\eps)D\vu_\eps:D\Ceps\vpsi\dx\dt
 \to\int_{\Dt}
        \big(2\nu(\phi)D\vu:D\vpsi+\nu(\phi)\B \vu\cdot\vpsi\big)\dx\dt.
\]

The convective corrector is handled by integration by parts, since the corrector gradients carry critical energy. Put $\vr_\eps=\Ceps\vpsi-\vpsi$. Using incompressibility and the zero trace of $\vu_\eps$, integration by parts yields
\[
 \int_0^T\!\int_\Oe(\vu_\eps\otimes \vu_\eps):\nabla\vr_\eps\dx\dt
 =-\int_0^T\!\int_\Oe (\vu_\eps\cdot\nabla)\vu_\eps\cdot\vr_\eps\dx\dt.
\]
By \cref{lem:corrector}, the right-hand side is bounded by
\[
 \|\vu_\eps\|_{L^2_tL^6(\Oe)}
 \|\nabla \vu_\eps\|_{L^2((0,T)\times\Oe)}
 \|\vr_\eps\|_{L^\infty_tL^3_x}\to0.
\]
This calculation is legitimate by $H^1$ approximation at fixed $\eps$ and all products belong to the displayed dual integrability classes. The remaining term with $\nabla\vpsi$ converges because $\widetilde \vu_\eps\otimes\widetilde \vu_\eps\to \vu\otimes \vu$ in $L^1(\Dt)$.

For the capillary force, the corrector error is bounded by
\[
 \left|\int_0^T\!\int_\Oe\phi_\eps\nabla\mu_\eps\cdot\vr_\eps\dx\dt\right|
 \leq\|\phi_\eps\|_{L^\infty_tL^6(\Oe)}
       \|\nabla\mu_\eps\|_{L^2((0,T)\times\Oe)}
       \|\vr_\eps\|_{L^2_tL^3_x}\to0.
\]
Against the fixed test $\vpsi$, $\Eeps\phi_\eps\vpsi$ converges strongly in $L^2$, while $\chi_\eps\nabla\mu_\eps\weak\nabla\mu$ in $L^2$. Thus the capillary term converges to $\lambda\phi\nabla\mu$. Combining these limits yields the first equation of \eqref{eq:limit} in solenoidal weak form.

For the phase equation, the restriction of a smooth ambient scalar test to $\Oe$ is admissible. 
Since $\Eeps\phi_\eps\to\phi$ strongly in $L^2_tH^1_x$ and $\widetilde\vu_\eps\to\vu$ strongly in $L^2_tL^2_x$, we have
\[
 (\Eeps\phi_\eps)\widetilde \vu_\eps\to\phi \vu
                    \quad\text{in }L^1(\Dt).
\]
Along the subsequence fixed in \eqref{eq:mainconvergences}, $\Eeps\phi_\eps\to\phi$ almost everywhere in $\mathcal D_T$. Since $m$ is bounded and continuous, $m(\Eeps\phi_\eps)\to m(\phi)$ almost everywhere. Bounded convergence against each fixed $L^2$ test, combined with weak convergence of $\chi_\eps\nabla\mu_\eps$, identifies the mobility flux:
\[
 \chi_\eps m(\phi_\eps)\nabla\mu_\eps
       \weak m(\phi)\nabla\mu\quad\text{in }L^2(\Dt).
\]
The chemical-potential identity \eqref{eq:limitchemical} is inherited by the limit, which completes the identification of the effective system. 
 To extend the limiting equations from smooth tests to their stated energy-space test classes, observe that
\begin{equation}\label{eq:limitfluxclasses}
 \vu\otimes \vu\in L^{4/3}_tL^2_x,\quad
 \phi\nabla\mu\in L^2_tL^{3/2}_x,\quad
 \phi \vu,\ m(\phi)\nabla\mu\in L^2_tL^2_x.
\end{equation}
The corresponding microscopic quantities satisfy the same uniform bounds. Distributional identification and weak compactness in these reflexive spaces identify their weak limits. For the momentum equation, $\vu\otimes\vu\in L^{4/3}_tL^2_x$ pairs with $\nabla\vpsi\in L^4_tL^2_x$, while $\phi\nabla\mu\in L^2_tL^{3/2}_x$ pairs with $\vpsi\in L^2_tL^3_x$, which follows from $\cV\hookrightarrow L^6(\Omega;\R^3)$. The viscous and Brinkman terms are continuous on $L^2(0,T;\cV)$. For the phase equation, $\phi\vu$ and $m(\phi)\nabla\mu$ belong to $L^2_tL^2_x$ and therefore pair with gradients of $H^1$ tests.

To make the density step explicit, let $\{\vpsi_j\}_{j\ge1}\subset\Ssig$ be dense in $\cV$ and let $\vpsi\in C^1([0,T];\cV)$ with $\vpsi(T)=0$. Finite-rank approximations of the form
\[
 \vpsi_m(t)=\sum_{j=1}^{N_m}\alpha_{m,j}(t)\vpsi_j,
 \qquad \alpha_{m,j}\in C^1([0,T]),\quad \alpha_{m,j}(T)=0,
\]
can be chosen so that $\vpsi_m\to\vpsi$ in $C^1([0,T];\cV)$. In particular, $\vpsi_m(0)\to\vpsi(0)$ in $\cH$. The flux classes above make every space--time term continuous under this convergence, while
\[
 |(\vu^0,\vpsi_m(0)-\vpsi(0))_\Omega|
 \le\|\vu^0\|_2\|\vpsi_m(0)-\vpsi(0)\|_2.
\]
The same finite-rank-in-space, smooth-in-time construction applies to the scalar test classes. Passing first with smooth finite-rank tests and then letting $m\to\infty$ yields the full weak formulation. The scalar boundary conditions therefore have the variational meaning specified in \cref{def:effectiveweak}.

Testing the phase equation by the spatial constant $1$ shows that the distributional derivative of $t\mapsto\int_\Omega\phi(t)\dx$ vanishes. The initial mean is the limit of the microscopic initial means: by \eqref{eq:fillingerror}, \eqref{eq:data} and $|\He|\to0$,
\[
 \int_{\Oe}\phi_\eps^0\,\dx
 =\int_\Omega \chi_\eps\phi_\eps^0\,\dx
 \to \int_\Omega\phi^0\,\dx.
\]
Since $\phi\in C([0,T];L^2(\Omega))$ by \eqref{eq:strongtimetraces}, the limiting mean has a continuous representative. Hence
\[
 \int_\Omega\phi(t)\dx=\int_\Omega\phi^0\dx,
 \quad t\in[0,T].
\]

The flux bounds imply $\partial_t\phi\in L^2(0,T;(H^1(\Omega))')$ and $\partial_t\vu\in L^{4/3}(0,T;\cV')$. The time representatives and initial traces were identified in \cref{lem:phaseL2,lem:velocitycompactness}. In addition, $\phi\in C_w([0,T];H^1(\Omega))$ by its $L^\infty_tH^1_x$ bound and strong $L^2$ time continuity. These facts establish the asserted weak-solution regularity.

\subsection{Dissipation and pressure}
The weighted lower-bound argument identifies the resistance contribution to the viscous dissipation in addition to the bulk strain.

The strong space--time convergences allow a further subsequence such that
\[
 \widetilde \vu_\eps(t)\to \vu(t)\text{ in }L^2(\Omega),\qquad
 \Eeps\phi_\eps(t)\to\phi(t)\text{ in }H^1(\Omega)
\]
for almost every $t\in(0,T)$. Intersect this set with the full-measure sets on which the energy inequalities for the countably many members of the selected sequence hold. On the resulting common set, both the kinetic and phase energies converge. For the phase energy, the integrals over the holes vanish by strong $H^1$ convergence and the fixed-domain embedding $H^1\hookrightarrow L^4$. The initial energy convergence follows in the same way as in \cref{subsec:main}.

Apply \eqref{eq:weightedliminf} on $(0,t)$ to obtain
\begin{equation}\label{eq:viscousdisslimit}
 \liminf_{\eps\to0}\int_0^t\!\int_\Oe2\nu(\phi_\eps)|D\vu_\eps|^2\dx\dd s
 \geq\int_0^t\!\int_\Omega
      \big(2\nu(\phi)|D\vu|^2+\nu(\phi)\B \vu\cdot \vu\big)\dx\dd s.
\end{equation}
Likewise,
\[
 \chi_\eps\sqrt{m(\phi_\eps)}\nabla\mu_\eps
       \weak\sqrt{m(\phi)}\nabla\mu\quad\text{in }L^2(\Dt),
\]
so weak lower semicontinuity gives the chemical dissipation. The force work converges in $L^1(\Dt)$ and hence its time primitives converge uniformly. The weighted lower bound holds on every fixed interval $(0,t)$ along the same subsequence, since the convergences used in \cref{prop:weighted} restrict to that interval. Taking the lower limit in the microscopic inequality on the common full-measure set proves \eqref{eq:limitenergy}.

To reconstruct the pressure, integrate the limiting solenoidal momentum identity in time. Define $\mathcal G(t)\in\W'$ by
\[
 \mathcal G(t)=\vu^0-\vu(t)+\int_0^t
 \Big[\vf-\lambda\phi\nabla\mu-\Div(\vu\otimes\vu)
      +\Div(2\nu(\phi)D\vu)-\nu(\phi)\B\vu\Big](s)\,\mathrm ds.
\]
The bounds in \eqref{eq:limitfluxclasses} and the energy estimate give
\[
 \|\mathcal G\|_{L^\infty(0,T;\W')}\leq C_T.
\]
Choose a countable set $\{\vpsi_j\}_{j\geq1}\subset\cV$ dense in $\cV$ with respect to the $H^1$ norm. For each $j$, the integrated weak momentum identity gives $\langle\mathcal G(t),\vpsi_j\rangle=0$ outside a measure-zero set $N_j$. On $[0,T]\setminus\bigcup_jN_j$ the identity holds for every $j$ and the continuity of $\mathcal G(t)\in\W'$ with respect to the $H^1$ norm then extends it to every $\vpsi\in\cV$. Thus one common full-measure set works for all solenoidal tests.

The fixed-domain divergence right inverse converts the annihilation of $\mathcal G(t)$ on $\cV$ into a gradient representation. If $\Omega$ is bounded, the Bogovski\u\i\ theorem provides a bounded linear map
\[
 \mathfrak B_\Omega:L^2_0(\Omega)\to H^1_0(\Omega;\mathbb R^3),
 \quad
 \Div(\mathfrak B_\Omega q)=q,
 \quad
 \|\mathfrak B_\Omega q\|_{H^1(\Omega)}\leq C_\Omega\|q\|_2,
\]
see \cite[Chapter~III, Section~3]{Galdi}. This Lipschitz-domain theorem applies in particular to the present $C^3$ boundary. On the torus, for $q\in L^2_0(\T)$ write its Fourier series $q=\sum_{k\in\mathbb Z^3\setminus\{0\}}\widehat q_k e^{2\pi i k\cdot x}$ and define the zero-mean solution of $\Delta\zeta_q=q$ by $\widehat\zeta_{q,k}=-(4\pi^2|k|^2)^{-1}\widehat q_k$. Then $\zeta_q\in H^2(\T)$ and
\[
 \|\nabla\zeta_q\|_{H^1(\T)}\leq C\|q\|_{L^2(\T)}.
\]
Setting $\mathfrak B_\Omega q=\nabla\zeta_q$ gives the same right-inverse estimate. For almost every $t$, define a linear functional on $L^2_0(\Omega)$ by
\[
 \Lambda_t(q)=-\langle\mathcal G(t),\mathfrak B_\Omega q\rangle.
\]
Then
$|\Lambda_t(q)|\leq C_\Omega\|\mathcal G(t)\|_{\W'}\|q\|_2$.
The Riesz representation theorem therefore yields a unique \(P(t)\in L^2_0(\Omega)\) such that
\[
 (P(t),q)_\Omega=\Lambda_t(q),
 \qquad
 \|P(t)\|_2\leq C_\Omega\|\mathcal G(t)\|_{\W'}.
\]
The map from \(\mathcal G(t)\) to \(P(t)\) is induced by fixed bounded linear operators. Hence $P$ is weakly measurable. Since $L^2_0(\Omega)$ is separable, the Pettis measurability theorem yields strong measurability. Consequently
\(P\in L^\infty(0,T;L^2_0(\Omega))\). For $\vxi\in\W$, the function
\[
 \vxi_0=\vxi-\mathfrak B_\Omega(\Div\vxi)
\]
belongs to $\cV$; in the toroidal case the constant solenoidal modes are already included in $\cV$. Hence
\[
 \langle\mathcal G(t),\vxi\rangle
 =-\int_\Omega P(t)\Div\vxi\,\mathrm dx
 =\langle\nabla P(t),\vxi\rangle.
\]
Consequently $\mathcal G=\nabla P$ in $L^\infty(0,T;\W')$. 
We understand $p=\partial_tP$ as a space--time distribution, defined by
\[
 \langle p,\zeta\rangle=-\int_0^T(P(t),\partial_t\zeta(t))_\Omega\,\mathrm dt,
 \quad \zeta\in C_c^\infty((0,T)\times\Omega).
\]
No Bochner time derivative of $P$ in $L^2_0(\Omega)$ is used or claimed. Differentiating $\mathcal G=\nabla P$ in time in the distributional sense and using the definition of $\mathcal G$ recovers the full momentum equation in \eqref{eq:limit}; this $p$ is the pressure distribution occurring there. This completes the proof of \cref{thm:main}.

\begin{proof}[Proof of \cref{cor:spheres}]
For $K=\overline{B(0,r)}$, put $s=|x|$ and $\vomega=x/s$. The decaying exterior field with boundary value $\ve_i$ is
\[
 \vh^i(x)=\left[\frac{3r}{4s}(\Ithree+\vomega\otimes\vomega)
          +\frac{r^3}{4s^3}(\Ithree-3\vomega\otimes\vomega)\right]\ve_i,
 \quad q^i(x)=\frac{3r}{2s^2}(\ve_i\cdot\vomega).
\]
A direct substitution gives $-\Delta\vh^i+\nabla q^i=0$ and $\Div\vh^i=0$ for $s>r$ and the formula gives $\vh^i=\ve_i$ at $s=r$ and $\vh^i(x)\to0$ as $s\to\infty$. Thus it satisfies \eqref{eq:exterior}. At $s=r$, the outward fluid normal is $-\vomega$ and the traction is $(2D\vh^i-q^i\Ithree)(-\vomega)=3\ve_i/(2r)$. Integrating over the sphere yields $\M \ve_i=6\pi r \ve_i$. Multiplication by $\beta\nu(\phi)$ gives the asserted resistance. Replacing $\nu$ by $\nu_{\mathrm{ref}}/2$ gives the final formula.
\end{proof}

The macroscopic viscous term retains the full symmetric stress, with $\nu(\phi)$ inside the divergence. Its coexistence with the positive resistance term distinguishes \eqref{eq:limit} from the Darcy system. The nonvanishing capacity energy in \eqref{eq:viscousdisslimit} is carried by microscopic boundary layers. Correspondingly, the velocity convergence in $L^2_tH^1_x$ is weak, while the phase convergence in that space is strong.

\section{The vanishing-capillarity limit}\label{sec:vanishing}
We prove \cref{thm:vanishing} using the same geometric and scalar arguments, while keeping track of the normalization in the equations. For each fixed $\eps$, the solenoidal momentum identity determines a pressure distribution $p_\eps$, unique up to an additive function of time, by the standard de Rham/Bogovski\u\i{} argument on $\Oe$. No estimate uniform in $\eps$ is needed here.
Define the rescaled microscopic pressure by $\pi_\eps=\lambda_\eps^{-1/2}p_\eps$ in distributions. Substituting $\vu_\eps=\sqrt{\lambda_\eps}\,\vv_\eps$ and $\vf_\eps=\sqrt{\lambda_\eps}\,\vg_\eps$ into the momentum equation and dividing it by $\sqrt{\lambda_\eps}$ leaves the acceleration and viscous terms at order one, while convection and capillarity acquire the factor $\sqrt{\lambda_\eps}$. In the phase equation only the transport velocity is rescaled, producing the same factor in front of advection. Thus
\begin{equation}\label{eq:scaledmicro}
 \left\{\begin{aligned}
 \partial_t\vv_\eps+\sqrt{\lambda_\eps}\Div(\vv_\eps\otimes \vv_\eps)
 -\Div(2\nu(\phi_\eps)D\vv_\eps)+\nabla\pi_\eps
    &=\vg_\eps-\sqrt{\lambda_\eps}\phi_\eps\nabla\mu_\eps,\\
 \Div \vv_\eps&=0,\\
 \partial_t\phi_\eps+\sqrt{\lambda_\eps}\Div(\phi_\eps \vv_\eps)
    &=\Div(m(\phi_\eps)\nabla\mu_\eps),\\
 \mu_\eps&=-\Delta\phi_\eps+F'(\phi_\eps).
 \end{aligned}\right.
\end{equation}
Dividing the microscopic energy inequality by $\lambda_\eps$ and using $\vu_\eps=\sqrt{\lambda_\eps}\,\vv_\eps$ and $\vf_\eps=\sqrt{\lambda_\eps}\,\vg_\eps$ gives
\begin{equation}\label{eq:scaledenergy}
 \begin{aligned}
 \tfrac12\|\vv_\eps(t)\|_{L^2(\Oe)}^2+\calF_\Oe(\phi_\eps(t))
 +\int_0^t\!\int_\Oe
       \big(2\nu(\phi_\eps)|D\vv_\eps|^2
           +m(\phi_\eps)|\nabla\mu_\eps|^2\big)\dx\dd s\\
 \leq\tfrac12\|\vv_\eps^0\|_{L^2(\Oe)}^2+\calF_\Oe(\phi_\eps^0)
       +\int_0^t\!\int_\Oe \vg_\eps\cdot \vv_\eps\dx\dd s.
 \end{aligned}
\end{equation}
Hence \eqref{eq:uniformenergy}--\eqref{eq:muFbounds} hold with $\vv_\eps$ in place of $\vu_\eps$. The phase time-derivative bound remains valid, with its advective flux multiplied by $\sqrt{\lambda_\eps}$. The chemical-potential identity is unchanged, so \cref{lem:phaseL2} and \cref{prop:strongphase} give the same phase and viscosity convergences. The corrected-mode argument in \cref{lem:velocitycompactness} applies to $\vv_\eps$, since the additional factors $\sqrt{\lambda_\eps}$ are bounded. Thus the convergences stated in \eqref{eq:mainconvergences}--\eqref{eq:strongtimetraces} hold with $\vv_\eps,\vv$ in place of $\vu_\eps,\vu$. The remaining limit passage concerns the three terms carrying the factor $\sqrt{\lambda_\eps}$: momentum convection, capillary forcing and phase advection.

Testing the first equation of \eqref{eq:scaledmicro} by $\Ceps\vpsi$, the convective contribution tends to zero because
\[
 \sqrt{\lambda_\eps}
 \left|\int_0^T\!\int_\Oe(\vv_\eps\otimes \vv_\eps):\nabla\Ceps\vpsi\dx\dt\right|
 \leq C_{\vpsi} T^{1/4}\sqrt{\lambda_\eps}
          \|\vv_\eps\|_{L^{8/3}(0,T;L^4(\Oe))}^2\to0.
\]
The norm is uniformly bounded by the energy estimate and interpolation. The capillary contribution is bounded by
\[
 C_{\vpsi} T^{1/2}\sqrt{\lambda_\eps}
 \|\phi_\eps\|_{L^\infty_tL^6(\Oe)}
 \|\nabla\mu_\eps\|_{L^2((0,T)\times\Oe)}\to0.
\]
The viscous term converges to the bulk strain plus $\nu(\phi)\B \vv$ by \cref{prop:weighted}. The acceleration has coefficient one and survives. Passing to the other terms proves the unsteady Stokes--Brinkman equation.

The phase advection vanishes in distributions because
\[
 \sqrt{\lambda_\eps}\,
 \|(\Eeps\phi_\eps)\widetilde \vv_\eps\|_{L^2_tL^2_x}\to0.
\]
The mobility and chemical-potential terms pass as before, giving the unadvected Cahn--Hilliard equation in \eqref{eq:vanishinglimit}. The energy inequality follows from \eqref{eq:scaledenergy}, the weighted viscous lower bound, and the common full-measure endpoint argument used for \eqref{eq:limitenergy}. To identify the time derivative, fix $\theta\in C_c^\infty(0,T)$ and $\vz\in\cV$. The test field $\theta(t)\vz$ is admissible in the limiting momentum identity. Define $L(t)\in\cV'$ for almost every $t$ by
\[
 \langle L(t),\vz\rangle
 =\int_\Omega \vg(t)\cdot\vz\,\mathrm dx
 -\int_\Omega\bigl(2\nu(\phi)D\vv:D\vz
       +\nu(\phi)\B\vv\cdot\vz\bigr)\,\mathrm dx.
\]
The coefficient bounds and the energy estimate give
\[
 \|L(t)\|_{\cV'}
 \le C\bigl(\|\vg(t)\|_2+\|\vv(t)\|_{H^1}\bigr),
 \qquad L\in L^2(0,T;\cV').
\]
Hence
\[
 -\int_0^T(\vv,\vz)_\Omega\theta'\,\mathrm dt
 =\int_0^T\langle L,\vz\rangle\theta\,\mathrm dt,
\]
so $\partial_t\vv=L$ in $\mathcal D'(0,T;\cV')$. Since $\vv\in L^2(0,T;\cV)$, the Hilbert-space energy lemma yields $\vv\in C([0,T];\cH)$. The same fixed-domain pressure reconstruction used above gives $\pi=\partial_t\Pi$ with $\Pi\in L^\infty(0,T;L^2_0(\Omega))$.

Finally, $\vu_\eps=\sqrt{\lambda_\eps}\vv_\eps$ and the uniform normalized energy bounds give
\[
 \|\widetilde \vu_\eps\|_{L^\infty_tL^2_x}
  +\|\widetilde \vu_\eps\|_{L^2_tH^1_x}\leq C_T\sqrt{\lambda_\eps}\to0.
\]
This completes the proof of \cref{thm:vanishing}. The phase still affects the limiting velocity through $\nu(\phi)$, but the limiting velocity no longer transports the phase. This one-way coupling follows from the prescribed normalization together with the critical homogenization limit.

\Cref{app:existence} supplies the fixed-$\eps$ existence result used in \cref{thm:main}.

\appendix
\section{Weak solutions on a fixed perforated domain}\label[appendix]{app:existence}
The microscopic solution class is nonempty under the constitutive assumptions of \cref{subsec:model}. Constants in this appendix may depend on the fixed parameter $\eps$.

\begin{proposition}[Fixed-parameter existence]\label{prop:existence}
Let $\eps>0$ be fixed so that the holes are disjoint, let $\lambda_\eps>0$, and assume \eqref{eq:constitutive}. For any $\vu_\eps^0\in \cH_\eps$, $\phi_\eps^0\in H^1(\Oe)$ and $\vf_\eps\in L^2(0,T;L^2(\Oe;\R^3))$, there exists a finite-energy weak solution in the sense of \cref{def:weak} on $[0,T]$.
\end{proposition}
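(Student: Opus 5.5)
The plan is a Faedo--Galerkin scheme on the fixed perforated domain $\Oe$, followed by energy estimates and a compactness passage to the limit. For the velocity I take the eigenfunctions $\{\vw_j\}$ of the Stokes operator on $\cV_\eps$; since $\Oe$ is a bounded $C^3$ set (or its periodic analogue), these are smooth up to the boundary. For the phase I take the Neumann eigenfunctions $\{\xi_j\}$ of $-\Delta+\Id$ on $\Oe$, which form an orthogonal basis of $H^1(\Oe)$ and of $L^2(\Oe)$. I seek $\vu_n=\sum_{j\le n}\alpha_j(t)\vw_j$, $\phi_n=\sum_{j\le n}\beta_j(t)\xi_j$ and $\mu_n=P_n(-\Delta\phi_n+F'(\phi_n))$, where $P_n$ is the $L^2$ projection onto $\operatorname{span}\{\xi_1,\dots,\xi_n\}$. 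The unknowns satisfy the projected momentum and Cahn--Hilliard equations tested against the first $n$ modes, with projected initial data. This is a finite system of ODEs with locally Lipschitz right-hand side, because $\nu$ and $m$ are Lipschitz and $F'$ is polynomial, so Carathéodory/Picard theory gives a local solution.

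Next I derive the discrete energy identity. I test momentum by $\vu_n$, the phase equation by $\lambda_\eps\mu_n$, and use $(\partial_t\phi_n,\mu_n)=\frac{d}{dt}\calF_\Oe(\phi_n)$. The capillary term $-\lambda_\eps(\phi_n\nabla\mu_n,\vu_n)$ cancels the advection term $\lambda_\eps(\Div(\phi_n\vu_n),\mu_n)$ after integrating by parts with $\Div\vu_n=0$ and $\vu_n=0$ on $\partial\Oe$. Gronwall then gives bounds on $\vu_n$ in $L^\infty_tL^2\cap L^2_tH^1$ (via Korn), on $\phi_n$ in $L^\infty_tH^1$, and on $\nabla\mu_n$ in $L^2$, uniform in $n$. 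These bounds continue the solution globally to $[0,T]$. The mean of $\mu_n$ is controlled as in \eqref{eq:meanmu}, since $\xi_1$ is constant, and Poincaré on $\Oe$ then bounds $\mu_n$ in $L^2_tH^1$. Duality estimates, exactly as in Section~\ref{sec:phase} and Lemma~\ref{lem:velocitycompactness} but now at fixed $\eps$, give $\partial_t\vu_n$ bounded in $L^{4/3}(0,T;\cV_\eps')$ and $\partial_t\phi_n$ bounded in $L^2(0,T;(H^1)')$. Here the uniform stability of $P_n$ in $H^1$ (spectral basis) and of the Stokes projection in $\cV_\eps$ are used.

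Aubin--Lions then yields $\vu_n\to\vu_\eps$ strongly in $L^2_tL^2$, $\phi_n\to\phi_\eps$ strongly in $C([0,T];L^2)\cap L^2_tL^p$ for $p<6$, and $\mu_n\weak\mu_\eps$ weakly in $L^2_tH^1$. Nonlinear terms converge as follows. The product $\nu(\phi_n)D\vu_n$ converges weakly because $\nu(\phi_n)\to\nu(\phi_\eps)$ boundedly a.e. while $D\vu_n$ converges weakly; the term $m(\phi_n)\nabla\mu_n$ is handled the same way. The terms $\phi_n\vu_n$ and $\vu_n\otimes\vu_n$ converge strongly in $L^1$, and $\phi_n\nabla\mu_n$ is a product of a strong and a weak limit. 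I first pass to the limit against fixed Galerkin modes and then use density. The main obstacle is the chemical potential relation. Because the $\mu_n$ equation holds only on the discrete span, I pass to the limit with fixed $\xi_j$ using $F'(\phi_n)\to F'(\phi_\eps)$ in $L^2_tL^2$, which follows from the $L^\infty_tL^6$ bound and strong $L^2$ convergence. I then extend \eqref{eq:weakchemical} to all $\eta\in L^2_tH^1$ by density, since both sides are continuous there.

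Finally, the energy inequality follows by weak lower semicontinuity of the dissipation, using $\sqrt{\nu(\phi_n)}D\vu_n\weak\sqrt{\nu(\phi_\eps)}D\vu_\eps$. At a.e. time, the energy converges through strong convergence of $\vu_n(t)$ and of $\phi_n(t)$ in $L^4$, together with lower semicontinuity of the gradient norm. The initial energies converge because the projected data converge strongly in $L^2\times H^1$. The regularity $C_w([0,T];\cH_\eps)$ follows from the $L^\infty$ bound together with the time-derivative bound, and $\phi_\eps\in C([0,T];L^2)$ follows from Aubin--Lions/Lions--Magenes.
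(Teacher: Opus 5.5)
Your scheme follows the paper's proof. Both use Stokes and Neumann eigenbases with the constant as first mode. Your $\mu_n=P_n(-\Delta\phi_n+F'(\phi_n))=-\Delta\phi_n+P_nF'(\phi_n)$ is exactly the paper's discrete chemical identity on $Y_n$. The energy identity, the mean-value control of $\mu_n$, the duality bounds via stability of the spectral projections, and the Simon/Aubin--Lions compactness are also the same.

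The one real difference is the last step. The paper upgrades to $\phi_n\to\phi_\eps$ strongly in $L^2(0,T;H^1(\Oe))$ by testing the discrete chemical equation with $\phi_n-\mathcal P_n^G\phi_\eps\in Y_n$. It uses this upgrade twice: for strong $L^2_tL^2_x$ convergence of $F'(\phi_n)$, and for convergence of the full phase energy $\calF_{\Oe}(\phi_n(t))$ at almost every endpoint time. You skip the upgrade and use weak lower semicontinuity of $\|\nabla\phi_n(t)\|_2^2$ together with $L^4$ convergence of $\phi_n(t)$. Since only an energy \emph{inequality} is required, this suffices and is slightly more economical.

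One justification is wrong, though harmlessly. Strong convergence of $F'(\phi_n)$ in $L^2_tL^2_x$ does not follow from an $L^\infty_tL^6_x$ bound and strong $L^2$ convergence alone. Those two facts only give strong convergence in $L^2_tL^r_x$ for $r<2$. For example, the bubbles $g_n=n^{1/2}\varphi(n\,\cdot)$ are bounded in $H^1$ and tend to zero in $L^2$, while $\|g_n^3\|_2=\|\varphi\|_6^3$ stays fixed.

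To pass to the limit against fixed modes $\xi_j$ you only need weak convergence of $F'(\phi_n)$. That follows from almost-everywhere convergence and the $L^\infty_tL^2_x$ bound. If you want the strong statement, you can get it from the paper's gradient upgrade. Alternatively, use the $L^2_tH^2$ bound implied by $-\Delta\phi_n=\mu_n-P_nF'(\phi_n)$ and Neumann elliptic regularity on the fixed domain $\Oe$, then apply Aubin--Lions.
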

\begin{proof}
Set $G=\Oe$ and keep $\eps>0$ fixed. Let $\{\vw_j\}_{j\geq1}$ be an $L^2(G;\mathbb R^3)$-orthonormal Stokes eigenbasis of $\cH_\eps$ with $\vw_j\in\cV_\eps$, and let $\{\zeta_j\}_{j\geq1}$ be an $L^2(G)$-orthonormal Neumann Laplacian eigenbasis, with the normalized constant as the first mode. In the periodic case the eigenproblems are posed on the perforated torus with periodic identification and zero velocity trace on the holes. Define
\[
 X_n=\operatorname{span}\{\vw_1,\ldots,\vw_n\},
 \quad
 Y_n=\operatorname{span}\{\zeta_1,\ldots,\zeta_n\},
\]
and denote the corresponding orthogonal projections by
$\mathcal Q_n^G$ and $\mathcal P_n^G$. Spectral theory gives
\[
 \mathcal Q_n^G \vz\to \vz\quad\text{in }\cV_\eps,
 \quad
 \mathcal P_n^G\eta\to\eta\quad\text{in }H^1(G)
\]
for $\vz\in\cV_\eps$ and $\eta\in H^1(G)$. The spectral projections are uniformly bounded in the corresponding energy spaces: for all $n$,
\[
 \|\mathcal P_n^G\eta\|_{H^1(G)}\le C\|\eta\|_{H^1(G)},
 \qquad
 \|\mathcal Q_n^G\vz\|_{H^1(G)}\le C_\eps\|\vz\|_{H^1(G)}.
\]

We seek
\[
 \vu_n(t)=\sum_{j=1}^na_j(t)\vw_j,
 \quad
 \phi_n(t)=\sum_{j=1}^nb_j(t)\zeta_j,
 \quad
 \mu_n(t)=\sum_{j=1}^nd_j(t)\zeta_j,
\]
such that, for any $\vw\in X_n$ and $\zeta,\eta\in Y_n$,
\[
\left\{\begin{aligned}
 (\partial_t\vu_n,\vw)_G
 &-((\vu_n\otimes\vu_n),\nabla\vw)_G
 +(2\nu(\phi_n)D\vu_n,D\vw)_G\\
 &= (\vf_\eps,\vw)_G-\lambda_\eps(\phi_n\nabla\mu_n,\vw)_G,\\
 (\partial_t\phi_n,\zeta)_G
 &-(\phi_n\vu_n,\nabla\zeta)_G
 +(m(\phi_n)\nabla\mu_n,\nabla\zeta)_G=0,\\
 (\mu_n,\eta)_G
 &=(\nabla\phi_n,\nabla\eta)_G+(F'(\phi_n),\eta)_G.
\end{aligned}\right.
\]
The algebraic chemical-potential equation determines $d(t)$ continuously from $b(t)$. Substitution into the first two equations produces a finite-dimensional Carath\'eodory system for $(a,b)$ with measurable forcing and locally Lipschitz state dependence. Hence a local absolutely continuous solution exists. We take
\[
 \vu_n(0)=\mathcal Q_n^G\vu_\eps^0,
 \quad
 \phi_n(0)=\mathcal P_n^G\phi_\eps^0.
\]
Since \(\mathcal P_n^G\phi_\eps^0\to\phi_\eps^0\) in \(H^1(G)\), and hence in \(L^4(G)\), the quartic phase energies of the projected initial data converge to \(\calF_G(\phi_\eps^0)\).

Testing the three Galerkin equations by $\vu_n$, $\lambda_\eps\mu_n$ and $\lambda_\eps\partial_t\phi_n$, respectively, gives the exact finite-dimensional identity
\[
 \frac{\mathrm d}{\mathrm dt}\mathcal E_{G,\lambda_\eps}(\vu_n,\phi_n)
 +\int_G\Big(2\nu(\phi_n)|D\vu_n|^2
 +\lambda_\eps m(\phi_n)|\nabla\mu_n|^2\Big)\,\mathrm dx
 =\int_G\vf_\eps\cdot\vu_n\,\mathrm dx.
\]
Here $((\vu_n\otimes\vu_n),\nabla\vu_n)_G=0$ and the two capillary transport terms cancel because
$(\phi_n\nabla\mu_n,\vu_n)_G=(\phi_n\vu_n,\nabla\mu_n)_G$.
Young's inequality and Gronwall's lemma yield
\[
 \|\vu_n\|_{L^\infty_tL^2(G)}
 +\|\vu_n\|_{L^2_tH^1(G)}
 +\|\phi_n\|_{L^\infty_tH^1(G)}
 +\|\nabla\mu_n\|_{L^2_tL^2(G)}\leq C_{\eps,T}.
\]
The uniform \(L^\infty(0,T;H^1(G))\) bound for \(\phi_n\), together with the fixed-domain embedding \(H^1(G)\hookrightarrow L^6(G)\) and the quartic identity \(F'(s)=s^3-s\), gives
\[
 \|F'(\phi_n)\|_{L^\infty(0,T;L^2(G))}\le C_{\eps,T}.
\]
Because the constant mode belongs to $Y_n$,
$(\mu_n)_G=(F'(\phi_n))_G$. Combining this mean-value bound with the dissipation estimate for \(\nabla\mu_n\) and the fixed-domain Poincar\'e inequality yields
\[
 \|\mu_n\|_{L^2(0,T;H^1(G))}\le C_{\eps,T}.
\]
These bounds extend the finite-dimensional solution to all of $[0,T]$.

The energy bounds also control the time derivatives needed for compactness. For $\vw\in\cV_\eps$, use $\mathcal Q_n^G\vw$ as a Galerkin test. The coefficient and projection bounds give
\[
\begin{aligned}
 |((\vu_n\otimes\vu_n),\nabla\mathcal Q_n^G\vw)_G|
 &\leq C_\eps\|\vu_n\|_{L^2}^{1/2}\|\vu_n\|_{H^1}^{3/2}\|\vw\|_{H^1},\\
 |(\phi_n\nabla\mu_n,\mathcal Q_n^G\vw)_G|
 &\leq C_\eps\|\phi_n\|_{H^1}\|\nabla\mu_n\|_2\|\vw\|_{H^1}.
\end{aligned}
\]
The viscous and force terms belong to $L^2(0,T;\cV_\eps')$. Therefore
\[
 \|\partial_t\vu_n\|_{L^{4/3}(0,T;\cV_\eps')}\leq C_{\eps,T}.
\]
Similarly, for $\zeta\in H^1(G)$,
\[
 |(\phi_n\vu_n,\nabla\mathcal P_n^G\zeta)_G|
 \leq C_\eps\|\phi_n\|_{H^1}\|\vu_n\|_{H^1}\|\zeta\|_{H^1},
\]
and the mobility term has the same $L^2$ time integrability. Hence
\[
 \|\partial_t\phi_n\|_{L^2(0,T;(H^1(G))')}\leq C_{\eps,T}.
\]

Apply Simon's compactness theorem \cite[Corollary~4, p.~85]{Simon} to
$\cV_\eps\hookrightarrow\hookrightarrow\cH_\eps\hookrightarrow\cV_\eps'$ and to
$H^1(G)\hookrightarrow\hookrightarrow L^2(G)\hookrightarrow(H^1(G))'$. After subsequence extraction,
\[
\begin{aligned}
 \vu_n&\to\vu_\eps &&\text{strongly in }L^2(0,T;L^2(G;\mathbb R^3)),\\
 \vu_n&\rightharpoonup\vu_\eps &&\text{weakly in }L^2(0,T;\cV_\eps),\\
 \phi_n&\to\phi_\eps &&\text{strongly in }L^2(0,T;L^2(G)),\\
 \phi_n&\stackrel{*}{\rightharpoonup}\phi_\eps &&\text{in }L^\infty(0,T;H^1(G)),\\
 \mu_n&\rightharpoonup\mu_\eps &&\text{weakly in }L^2(0,T;H^1(G)).
\end{aligned}
\]

The phase-gradient convergence is strengthened by the chemical-potential equation. Here $\phi_\eps$ denotes the fixed-$\eps$ Galerkin limit obtained above. Since
$\mathcal P_n^G\phi_\eps\to\phi_\eps$ in $L^2(0,T;H^1(G))$, the admissible test
$\phi_n-\mathcal P_n^G\phi_\eps\in Y_n$ yields
\[
\begin{aligned}
 \int_0^T\!\!\int_G|\nabla\phi_n|^2\,\mathrm dx\,\mathrm dt
 ={}&\int_0^T\!\!\int_G\nabla\phi_n\cdot\nabla\mathcal P_n^G\phi_\eps\,\mathrm dx\,\mathrm dt\\
 &+\int_0^T\!\!\int_G(\mu_n-F'(\phi_n))
       (\phi_n-\mathcal P_n^G\phi_\eps)\,\mathrm dx\,\mathrm dt.
\end{aligned}
\]
The second term tends to zero, while the first tends to
$\|\nabla\phi_\eps\|_{L^2((0,T)\times G)}^2$. Therefore
\[
 \phi_n\to\phi_\eps\quad\text{strongly in }L^2(0,T;H^1(G)).
\]
In particular, after extraction, $\phi_n\to\phi_\eps$ almost everywhere and strongly in $L^2_tL^6_x$. The polynomial structure of $F'$ and the uniform $L^\infty_tL^6_x$ bound give
$F'(\phi_n)\to F'(\phi_\eps)$ strongly in $L^2_tL^2_x$.

The coefficient-weighted weak limits follow from almost-everywhere phase convergence and the uniform coefficient bounds. For any fixed \(L^2\) test tensor or vector, dominated convergence gives strong \(L^2\) convergence after multiplication by \(\nu(\phi_n)-\nu(\phi_\eps)\) or \(m(\phi_n)-m(\phi_\eps)\). Hence
\[
 \nu(\phi_n)D\vu_n\rightharpoonup\nu(\phi_\eps)D\vu_\eps,
 \qquad
 m(\phi_n)\nabla\mu_n\rightharpoonup m(\phi_\eps)\nabla\mu_\eps
\]
weakly in \(L^2\). The strong $L^2$ convergence of $\vu_n$ and the uniform energy bounds identify the convective term. Moreover,
\[
 \|\phi_n\nabla\mu_n\|_{L^2(0,T;L^{3/2}(G))}
 \le \|\phi_n\|_{L^\infty(0,T;L^6(G))}
      \|\nabla\mu_n\|_{L^2(0,T;L^2(G))}
 \le C_{\varepsilon,T}.
\]
Since $\phi_n\to\phi_\varepsilon$ strongly in $L^2(0,T;L^6(G))$ and
$\nabla\mu_n\rightharpoonup\nabla\mu_\varepsilon$ weakly in $L^2$, the product converges distributionally to
$\phi_\varepsilon\nabla\mu_\varepsilon$. The uniform bound above and uniqueness of the distributional limit therefore give
\[
 \phi_n\nabla\mu_n\rightharpoonup
 \phi_\varepsilon\nabla\mu_\varepsilon
 \quad\text{weakly in }L^2(0,T;L^{3/2}(G)).
\]
Similarly, the convective and phase-transport products may be identified in their natural reflexive spaces. Passing to the limit in the Galerkin equations and then using density gives the weak identities in \cref{def:weak}.

Before taking the lower limit in the dissipation, we identify the weighted weak limits. Since
\(\phi_n\to\phi_\eps\) almost everywhere, the coefficients
\(\sqrt{\nu(\phi_n)}\) and \(\sqrt{m(\phi_n)}\) converge almost everywhere and remain uniformly bounded. Dominated convergence against fixed \(L^2\) tests, combined with
\(D\vu_n\rightharpoonup D\vu_\eps\) and
\(\nabla\mu_n\rightharpoonup\nabla\mu_\eps\), yields
\[
 \sqrt{\nu(\phi_n)}D\vu_n
 \rightharpoonup
 \sqrt{\nu(\phi_\eps)}D\vu_\eps,\qquad
 \sqrt{m(\phi_n)}\nabla\mu_n
 \rightharpoonup
 \sqrt{m(\phi_\eps)}\nabla\mu_\eps
\]
weakly in \(L^2\). To pass the finite-dimensional energy identity to the limit at the time endpoints, choose a further subsequence such that
\[
 \vu_n(t)\to\vu_\eps(t)\quad\text{in }L^2(G;\mathbb R^3),\qquad
 \phi_n(t)\to\phi_\eps(t)\quad\text{in }H^1(G)
\]
for almost every $t\in(0,T)$; this follows from the strong space--time convergences above. The $H^1(G)\hookrightarrow L^4(G)$ embedding then gives convergence of the quartic phase energy at those times. The projected initial data converge in the same norms and
\(
 \int_0^t(\vf_\eps,\vu_n)_G\,\mathrm ds
 \to\int_0^t(\vf_\eps,\vu_\eps)_G\,\mathrm ds
\)
for every $t$ because $\vu_n\to\vu_\eps$ strongly in $L^2_tL^2_x$. Integrating the Galerkin energy identity on $(0,t)$ and taking the lower limit in the two dissipation terms therefore yields the energy inequality for almost every $t$.

The derivative bounds imply
$\vu_\eps\in C_w([0,T];\cH_\eps)$ and
$\phi_\eps\in C([0,T];L^2(G))$. The latter follows from the standard Lions--Magenes argument applied to
$\phi_\eps\in L^2_tH^1_x$ and
$\partial_t\phi_\eps\in L^2_t(H^1)'$.
The convergence of the projected initial data identifies the prescribed initial values. Thus $(\vu_\eps,\phi_\eps,\mu_\eps)$ is a finite-energy weak solution on $[0,T]$.
\end{proof}

\paragraph{Data availability} 
No new data were created or analysed in this study.

\section*{Declarations}

\paragraph{Conflicts of Interest}
The authors declare no conflicts of interest.

\end{document}